\newtheorem{theorem}{Theorem}
\newtheorem{lemma}{Lemma}
\newtheorem{proposition}{Proposition}
\newcounter{Condition}
\newcommand{\mylabel}[2]{#2\def\@currentlabel{#2}\label{#1}}
\newcommand{\csection}[1]
{\begin{center}
		\stepcounter{section}
		{\bf\large\arabic{section}. #1}
	\end{center}
}
\newcommand{\csubsection}[1]{
	\begin{center}
		\stepcounter{subsection}
		{\it\arabic{section}.\arabic{subsection}. #1}
	\end{center}
}
\def\tr{\mbox{tr}}
\def\ve{\varepsilon}
\def\beq{\begin{equation}}
	\def\eeq{\end{equation}}
\def\beqr{\begin{eqnarray}}
	\def\eeqr{\end{eqnarray}}
\def\beqrs{\begin{eqnarray*}}
	\def\eeqrs{\end{eqnarray*}}
\def\bet{\begin{theorem}}
	\def\eet{\end{theorem}}
\def\bel{\begin{lemma}}
	\def\eel{\end{lemma}}
\def\bep{\begin{proposition}}
	\def\eep{\end{proposition}}
\def\bg{\begin{figure}[tbph]\begin{center}}
		\def\eg{\end{center}\end{figure}}
\def\bc{\begin{center}}
	\def\ec{\end{center}}
\newtheorem{corollary}{Corollary}
\def\wt{\widetilde}
\def\wh{\widehat}
\def\mN{\mathcal{N}}
\def\mR{\mathbb{R}}
\def\mS{\mathbb S}
\def\mL{\mathcal L}
\def\mH{\mathcal H}
\def\mS{\mathcal S}
\def\var{\mbox{var}}
\def\cov{\mbox{cov}}
\def\argmin{\mbox{argmin}}
\def\supp{\mbox{supp}}
\def\boxit#1{\vbox{\hrule\hbox{\vrule\kern6pt\vbox{\kern6pt#1\kern6pt}\kern6pt\vrule}\hrule}}
\numberwithin{equation}{section}
\title{\textbf{Profiled Transfer Learning for High Dimensional Linear Model}}
\date{} 
\author[1]{Ziqian Lin}
\author[2]{Junlong Zhao}
\author[3\footnote{Corresponding author. Email: wangfang226@sdu.edu.cn}]{Fang Wang}
\author[1]{Hansheng Wang}
\affil[1]{\textit{Guanghua School of Management, Peking University}}
\affil[2]{\textit{School of Statistics, Beijing Normal University}}
\affil[3]{\textit{Data Science Institute, Shandong University}}
\begin{document}
\begin{CJK}{GBK}{song}


\maketitle

\doublespacing
\begin{singlespace}
	\begin{abstract}
		We develop here a novel transfer learning methodology called Profiled Transfer Learning (PTL). The method is based on the \textit{approximate-linear} assumption between the source and target parameters. Compared with the commonly assumed \textit{vanishing-difference} assumption and \textit{low-rank} assumption in the literature, the \textit{approximate-linear} assumption is more flexible and less stringent. Specifically, the PTL estimator is constructed by two major steps. Firstly, we regress the response on the transferred feature, leading to the profiled responses. Subsequently, we learn the regression relationship between profiled responses and the covariates on the target data. The final estimator is then assembled based on the \textit{approximate-linear} relationship. To theoretically support the PTL estimator, we derive the non-asymptotic upper bound and minimax lower bound. We find that the PTL estimator is minimax optimal under appropriate regularity conditions. Extensive simulation studies are presented to demonstrate the finite sample performance of the new method. A real data example about sentence prediction is also presented with very encouraging results.\\

	\end{abstract}
			\textbf{KEYWORDS:} Approximate Linear Assumption; High Dimensional Data; High Dimensional Linear Regression; Profiled Transfer Learning; Transfer Learning
\end{singlespace}

\clearpage

\csection{INTRODUCTION}

High dimensional data have commonly emerged in many applications \citep{deng2009imagenet, fan2020statistical}. However, one often has very limited sample sizes due to various practical reasons such as the ethical issues, data collection costs, and possibly others \citep{lee2017medical,wang2020generalizing}. Consider for example a lung cancer dataset featured in the work by \cite{zhou2023ensemble}. In this study, 3D computed tomography (CT) images with unified voxel sizes of approximately $60\times 60 \times 60$ were collected for each patient. The objective there was to diagnose the nodule type being benign or malignant. 
However, the cost for diagnosing nodule type is notably high and the sample size can hardly be large. 
In the study of \cite{zhou2023ensemble}, only a total of 1,075 subjects were collected. 
Taking into account the covariates (i.e. CT images) being of size about $60\times 60 \times 60$, the sample size of 1,075 is exceedingly small. Similar issues can also be found in numerous other disciplines, including marketing \citep{wang2009forward}, computer vision \citep{vinyals2016matching,bateni2020improved}, and natural language processing \citep{mcauliffe2007supervised}. 

It is well known that sample sizes play a fundamental role in statistical modeling, determining the accuracy of estimators or powers of test statistics. Small sample sizes raise serious challenges for high dimensional data analysis. One way to address this problem is to impose some low-dimensional structures on the high-dimensional model of interest. For example, when high dimensional linear model are concerned, it is commonly assumed that the parameters are sparse, resulting in variously shrinkage estimation methods such as LASSO, SCAD, MCP and many others \citep{tibshirani1996regression, fan2001variable,zou2005regularization, yuan2006model, zou2006adaptive,wang2007unified, zhang2010nearly} and feature screening methods \citep[etc.]{fan2008sure,wang2009forward,fan2010sure,zhu2011model,li2012feature}. 
Another commonly used approach is to assume some (approximately) low rank structures. For example, reduced rank regression assumes a low-rank coefficient matrix \citep{anderson1951estimating, izenman1975reduced,anderson1999asymptotic, chen2012sparse, reinsel2022multivariate}, while factor model assumes the covariance matrix is approximately low rank \citep{bai2002determining,stock2002forecasting,bai2003inferential,fan2008high,bai2012statistical,lam2012factor,wang2012factor}. However, these assumptions may be violated. Consider for example a study exploring the effect of single-nucleotide polymorphisms (SNPs) on height, as reported by \cite{boyle2017expanded}. In this study, the total sample size amounts to 253,288 with a feature dimension of approximately 10 million. Moreover, this study suggests that there are about 400,000 common SNPs potentially having nonzero causal effects on height. Consequently, the associated regression coefficient vector is not too sparse. Recently estimations of nonsparse parameters have been also considered \citep{azriel2020estimation,silin2022canonical,zhao2023estimation}. Similarly the low dimensional structure is hard to verify and may be violated in practice. 

To solve the problem, a number of researchers have advocated the idea of transfer learning \citep{pan2009survey,weiss2016survey}.  For convenience, the dataset associated with the problem at hand is referred to as the target dataset. The main idea of transfer learning is to borrow strengths from some  auxiliary datasets that will be referred to as source datasets. The parameters associated with sources and the target are termed the source parameters and the target parameters, respectively. To borrow strengths from source datasets, certain types of assumptions have to be imposed on the similarity between the target and source datasets. For example, \cite{li2020transfer} and \cite{zhao2023residual} considered transfer learning of high dimensional linear model, assuming that the $\ell_q$-distance ($0\le q\le 1$) between the target parameter and some of the source parameters should be sufficiently small. For convenience, we refer to this assumption as the \textit{vanishing-difference} assumption. Similar assumption was also used by \cite{tian2022transfer} and \cite{li2023estimation} for generalized linear models,  \cite{lin2022transfer} for functional linear regression, and \cite{cai2022transfer} for nonparametric regression. As an alternative approach, 
\cite{du2020few} and \cite{tripuraneni2021provable} assumed the source parameters and the target parameter can be linearly approximated by a common set of vectors in a low-dimensional subspace. We refer to this as the \textit{low-rank} assumption. When these assumptions are satisfied, it has been shown that statistical efficiency can be enhanced significantly in these works.

Despite the success of the transfer learning methods, the similarity assumptions of these methods (i.e. the \textit{vanishing-difference} assumption and the \textit{low-rank} assumption) are stringent. For example, according to the \textit{vanishing-difference} assumption, only those source datasets with parameters very close to the target one can be used for transfer learning, making the task of detecting transferable source datasets extremely crucial \citep{tian2022transfer}. On the other hand, the \textit{low-rank} assumption is not perfect either. In practice, situations often arise where the parameters associated with  the target and various sources are linearly independent, making it challenging to identify a low rank structure for the linear subspace spanned by these parameters. Even if the \textit{low-rank} assumption holds, how to practically determine the structural dimension with finite data remains an additional challenging issue \citep{bai2002determining,luo2009contour, lam2012factor}.

In this paper, we focus on transfer learning for high dimensional linear models. To implement transfer learning with less stringent similarity assumptions, we propose here a novel method named Profiled Transfer Learning (PTL). Our approach assumes that the target parameter can be linearly approximated by the source parameters without the stringent rank constraint. Specifically, we assume that there are $K$ sources, and consider high dimensional linear models for both target and sources. Denote parameter of the target model as $\bm \beta\in\mR^p$, and the parameter of source $k$ as $\bm \beta^{(k)}\in\mR^p$, for $k = 1,\dots, K$. Then we assume that $\bm \beta = \sum_{k=1}^K w_k \bm \beta^{(k)} + \bm \delta$ for some constants $w_k$'s and $\bm \delta\in\mR^p$. This assumption is referred to as \textit{approximate-linear} assumption.
Unlike the \textit{vanishing-difference} assumption, \textit{approximate-linear} assumption allows the difference between the target and source parameters (i.e. $\|\bm \beta^{(k)} - \bm \beta\|_q$) to be arbitrarily large. Additionally, different from the \textit{low-rank} assumption, the \textit{approximate-linear} assumption allows the structural dimension of the subspace spanned be $\bm \beta^{(k)}$'s to be as large as $K$.

Based on the \textit{approximate-linear} assumption, we propose here a profiled transfer learning method with two major steps. Let $Y\in \mR$ and $X\in \mR^p$ be the response and covariates for the target linear model, respectively. In the first step, we estimate $\wh{\bm \beta}^{(k)}$ using data from source $k$ to generate the transferred features $\wh Z = \wh B^\top X$ in the target data, where $\wh B = (\wh {\bm \beta}^{(k)}, k = 1,\dots, K)\in\mR^{p\times K}$.
Then we regress the response $Y$ on the transferred features $\wh Z$ using  the target data, treating the resulting residuals as the profiled responses. In the second step, the regression relationship between the profiled responses and the covariates $X$ is learned by some  sparse learning method (e.g. LASSO) on the target data. Then the final estimator for the target parameter is assembled.
%
Theoretical and numerical analyses suggest that our estimator can significantly improve the statistical efficiency, even in scenarios where both the \textit{vanishing-difference} and the \textit{low-rank} assumptions are violated.

The remainder of the paper is organized as follows. In Section 2, we introduce the profiled transfer learning method. The non-asymptotic upper bound and the minimax lower bound are presented in Section 3. Simulation experiments and real data analysis are reported in Section 4. We conclude the paper by a brief discussion about future researches in Section 5. All the technical details are provided in the Appendix.

Notations. For any vector $\bm v = (v_1,\dots, v_p)^\top \in \mR^p$, denote $\|\bm v\|_1$, $\|\bm v\|$ and $\|\bm v\|_{\infty}$ as its $\ell_1$, $\ell_2$ and $\ell_{\infty}$ norms, respectively; denote $\|\bm v\|_0 = \sum_{j=1}^p I(v_j\neq 0)$ as the $\ell_0$ norm of $\bm v$ and $\mbox{supp}(\bm v) = \{i:v_j \neq 0,j = 1,\dots, p\}$ as the associated support set. Define $\lambda_{\max}(A)$ and $\lambda_{\min}(A)$ to be the maximum and minimum eigenvalues for an arbitrary symmetric matrix $A\in\mR^{p\times p}$. For any $A\in\mR^{p_1\times p_2}$, we   define its operator norm as $\|A\|_{\text{op}} = \sqrt{\lambda_{\max}(A^\top A)}$ and Frobenius norm as $\|A\|_F = \sqrt{\tr(A^\top A)}$, respectively. For a sub-Gaussian random variable $U\in\mR$, define its sub-Gaussian norm to be $\|U\|_{\psi_2} = \inf\{t>0: E\exp(U^2/t^2) \le 2\}$. For a sub-Gaussian random vector $V$, define its sub-Gaussian norm to be $\|V\|_{\psi_2} = \sup_{\|u\| = 1}\| u^\top V\|_{\psi_2}$. 
Let $a\wedge b = \min\{a,b\}$ and $a\vee b = \max\{a,b\}$ for any real $a,b\in\mR$. Write $a\lesssim b$ if there exists a constant $C>0$ such that $a \le Cb$.

\csection{PROFILED TRANSFER LEARNING FOR HIGH DIMENSIONAL LINEAR MODEL}
\csubsection{Target and Source Models}

Similar to \cite{li2020transfer}, we consider linear regression models for both target and sources in this paper. Let $\{(Y_i,X_i), i=1,\dots, n\}$ be i.i.d. observations from the following target model
\beq
Y_i = X_i^\top \bm \beta + \ve_i, \qquad i=1,\cdots, n, \label{Model target linear}
\eeq
where $Y_i\in\mR$ denotes the response  and $X_i = (X_{i1},\dots, X_{ip})^\top\in\mR^p$ denotes the associated $p$-dimensional covariates, satisfying $E(X_i) = 0$. Denote the covariance matrix of $X_i$ as $\Sigma = \cov(X_i)$. In addition, $\bm \beta = (\beta_1,\dots, \beta_p)^\top \in\mR^p $ is the coefficient vector of the target model and $\ve_i\in\mR$ is the random error independent of $X_i$ satisfying $E(\ve_i) = 0$ and $E(\ve_i^2) = \sigma^2$. Let $\mbox{supp}(\bm\beta) = \{j:\beta_j\neq 0, j = 1,\dots, p\}$ be the support set of $\bm \beta$ with cardinality $s_0$. Here we allow the number of parameters $p$ to be much larger than the sample size $n$.

To estimate the unknown parameter $\bm \beta$, a penalized ordinary least squares can be applied \citep{tibshirani1996regression,fan2001variable} by assuming that $\bm \beta$ is sparse. The loss function is given as
\[
\mL_{\text{lasso},\lambda_\beta}(\bm \beta) = (2n)^{-1}\sum_{i=1}^n \Big(Y_i - X_i^\top \bm \beta\Big)^2 + \lambda_\beta\|\bm \beta\|_1,
\]
where $\lambda_\beta$ is the tuning parameter for $\bm \beta$. Then the LASSO estimator is given as $\wh{\bm \beta}_{\text{lasso}} = \argmin_{\bm \beta} \mL_{\text{lasso},\lambda_\beta}(\bm \beta) $. Under appropriate regularity conditions, one can verify that $\|\wh{\bm \beta}_{\text{lasso}} - \bm \beta\| = O_p(\sqrt{s_0\log p/n})$. See \cite{vershynin2018high} and \cite{wainwright2019high} for some indepth theoretical discussions. However, when the parameter is not too sparse and the sample size $n$ is relatively small in the sense $s_0\log p/n \nrightarrow 0$ as $n\to\infty$, the LASSO estimator might be inconsistent \citep{bellec2018slope}. 

To solve this problem, we turn to transfer learning for help. Specifically, we assume there are a total of $K$ different sources. Denote $\{(X_i^{(k)}, Y_i^{(k)}), i = 1,\dots, n_k\}$ being the i.i.d. observations from the source $k$ for $k=1,\cdots,K$, generated from the following source model
\[
Y_i^{(k)} = X_i^{(k)\top} \bm\beta^{(k)} + \ve_i^{(k)},
\]
where $Y_i^{(k)}\in\mR$ is the response and $X_i^{(k)} = (X_{i1}^{(k)},\dots, X_{ip}^{(k)})^\top \in \mR^p$ is the corresponding covariates satisfying $E(X_i^{(k)}) =0$ and $\cov(X_i^{(k)}) = \Sigma_k$. 
Moreover, $\bm \beta^{(k)} = (\beta_1^{(k)}, \dots, \beta_p^{(k)})^\top\in \mR^p$ is the coefficient vector
and $\ve_i^{(k)}\in\mR$ is the random error independent of $X_i^{(k)}$ with $E(\ve_i^{(k)}) = 0$ and $E(\ve_i^{(k)2}) = \sigma_k^2$. For each source, the parameter can be estimated by some appropriate methods. We denote $\wh {\bm \beta}^{(k)}$ as the estimator for $\bm\beta^{(k)}$ for $k = 1,\dots, K$. If $n_k <p$ and $\bm\beta^{(k)}$ is sparse, then $\wh{\bm\beta}^{(k)}$ can be the LASSO estimator. If the sample size of source data is sufficiently large such that $n_k \gg p$, the classical ordinary least squares (OLS) estimator or ridge regression estimator can be also used, allowing $\bm\beta^{(k)}$ being either sparse or nonsparse.

\csubsection{Profiled Transfer Learning for High Dimensional Linear Regression Model}

We next assume that the $\bm \beta^{(k)}$'s with different $k$'s are linearly independent of each other. In other words, the dimension of the space spanned by $\bm \beta^{(k)}$'s is $K$. This assumption is much weaker than the \textit{low-rank} assumption \citep{du2020few, tripuraneni2021provable}, where it is assumed that $\bm\beta^{(k)}$'s fall into a low-dimensional subspace. To borrow information from the source data, we impose the following  {\it approximate-linear} assumption  
$$
\bm \beta = \sum_{k=1}^{K} w_k \bm \beta^{(k)} + \bm \delta,
$$
 where $w_k \in\mR$ is the weight assigned to $\bm \beta^{(k)}$ and $\bm \delta = (\delta_1,\dots, \delta_p)^\top \in\mR^p$ is the residual part. Ideally, we hope that $\bm \delta$ is small; specifically   $\|\bm\delta\|_1$ is small.
  By this assumption, we allow the differences $\|\bm \beta - \bm \beta^{(k)}\|$ to be large for $1\le k\le K$. Therefore, it is more flexible than the \textit{vanishing-difference} assumption \citep{li2020transfer,tian2022transfer}. 

Let $c_k \in\mR$ be arbitrary constants for $1\le k\le K$. Then we have
\[
\sum_{k=1}^K w_k \bm \beta^{(k)} + \bm \delta = \sum_{k=1}^K (w_k - c_k) \bm \beta^{(k)} + \bm \delta+ \sum_{k=1}^K c_k \bm \beta^{(k)} = \sum_{k=1}^K \bm w_k^*\beta^{(k)} +\bm  \delta^*,
\] 
where $w_k^* = w_k - c_k$ and $\bm \delta^* = \bm \delta + \sum_{k=1}^K c_k\bm \beta^{(k)}$. This suggests that $\bm w = (w_1,\dots, w_K)^\top \in\mR^K$ and $\bm \delta \in \mR^p$ may not be uniquely identified without appropriate constraints. To resolve this problem, we impose the following \textit{identification condition}: $\bm \beta^{(k)\top }\Sigma\bm \delta  = 0$ for every $1\le k\le K$. The following Proposition \ref{Prop identification} shows that both $w_k$'s and $\bm \delta$ are identifiable under this condition.
\begin{proposition}
	Assuming that $\bm \beta^{(k)\top }\Sigma\bm \delta  = 0$ for $1\le k\le K$, the parameters $\bm w$ and $\bm \delta$ are identifiable. \label{Prop identification}
\end{proposition}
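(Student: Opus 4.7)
The plan is to prove uniqueness of the decomposition directly. Suppose there are two valid representations
\[
\bm\beta = \sum_{k=1}^K w_k \bm\beta^{(k)} + \bm\delta = \sum_{k=1}^K \tilde w_k \bm\beta^{(k)} + \tilde{\bm\delta},
\]
both of which satisfy the identification condition, i.e.\ $\bm\beta^{(k)\top}\Sigma\bm\delta = 0$ and $\bm\beta^{(k)\top}\Sigma\tilde{\bm\delta} = 0$ for every $1\le k\le K$. Setting $B = (\bm\beta^{(1)},\dots,\bm\beta^{(K)}) \in \mR^{p\times K}$ and $\bm w = (w_1,\dots,w_K)^\top$, $\tilde{\bm w} = (\tilde w_1,\dots,\tilde w_K)^\top$, I would rewrite this as $B(\bm w - \tilde{\bm w}) = \tilde{\bm\delta} - \bm\delta$, so the problem reduces to showing that both sides must equal zero.

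The next step is to left-multiply by $B^\top \Sigma$. By the identification condition, $B^\top \Sigma \bm\delta = 0$ and $B^\top \Sigma \tilde{\bm\delta} = 0$, so the right-hand side collapses and we are left with
\[
B^\top \Sigma B\,(\bm w - \tilde{\bm w}) = 0.
\]
To conclude $\bm w = \tilde{\bm w}$, it suffices to show that the $K\times K$ Gram-type matrix $B^\top \Sigma B$ is invertible. This is where the linear independence of $\bm\beta^{(1)},\dots,\bm\beta^{(K)}$ enters: since $B$ has full column rank $K$ and $\Sigma$ is positive definite (guaranteed by the regularity conditions on the covariate distribution), $\Sigma^{1/2}B$ has rank $K$, so $B^\top\Sigma B = (\Sigma^{1/2}B)^\top(\Sigma^{1/2}B)$ is positive definite and hence invertible. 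Therefore $\bm w = \tilde{\bm w}$, and plugging this back into $B(\bm w - \tilde{\bm w}) = \tilde{\bm\delta} - \bm\delta$ immediately yields $\bm\delta = \tilde{\bm\delta}$.

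The main ingredients are therefore (i) the linear independence of the source parameters, which gives full column rank of $B$, and (ii) the positive definiteness of $\Sigma$, which upgrades full rank of $B$ to invertibility of $B^\top\Sigma B$. Neither step involves heavy machinery; the only subtlety worth flagging in the write-up is to state explicitly whichever assumption on $\Sigma$ is being invoked (for example $\lambda_{\min}(\Sigma) > 0$), since in the high-dimensional setting $\Sigma$ is a $p\times p$ population covariance and the argument uses strict positive definiteness of $\Sigma$ only on the $K$-dimensional span of the columns of $B$. There is no substantive obstacle beyond this bookkeeping.
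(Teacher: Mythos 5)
Your proof is correct, and it rests on exactly the same linchpin as the paper's: the identification condition kills the $\bm\delta$ term after hitting both sides with $B^\top\Sigma$, and positive definiteness of $B^\top\Sigma B$ (full column rank of $B$ plus $\lambda_{\min}(\Sigma)>0$ from (C4)) then pins down $\bm w$, after which $\bm\delta$ follows. The route is framed differently, though. You argue by uniqueness of the decomposition: two admissible representations of the same $\bm\beta$ give $B(\bm w - \tilde{\bm w}) = \tilde{\bm\delta} - \bm\delta$, and the orthogonality constraint forces both sides to vanish. The paper instead argues constructively from population moments: multiplying the model $Y_i = X_i^\top B\bm w + X_i^\top\bm\delta + \ve_i$ by $B^\top X_i$ and by $X_i$ and taking expectations yields the closed forms $\bm w = (B^\top\Sigma B)^{-1}B^\top E(X_iY_i)$ and $\bm\delta = \Sigma^{-1}\{E(X_iY_i) - \Sigma B\bm w\}$. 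The two arguments buy slightly different things: yours directly answers the non-uniqueness concern raised just before the proposition and, as you note, only needs $\Sigma$ to be strictly positive definite on the column span of $B$ for the $\bm w$ step; the paper's version expresses the parameters as explicit functionals of the observable moments, which is the stronger statistical sense of identifiability and also foreshadows the two-step PTL estimator (OLS of $Y$ on $Z = B^\top X$ for $\bm w$, then recovery of $\bm\delta$). Either write-up is acceptable; if you keep yours, it would be worth adding one sentence observing that $\bm\beta$ itself is identified from the joint distribution of $(X_i, Y_i)$ (via $\bm\beta = \Sigma^{-1}E(X_iY_i)$), so that uniqueness of the decomposition indeed delivers identifiability of $\bm w$ and $\bm\delta$ from observables.
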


The detailed proof of Proposition \ref{Prop identification} is provided in Appendix A. 
Let $B = (\bm \beta^{(1)},\dots, \\\bm \beta^{(K)}) \in\mR^{p\times K}$ and define $Z_i = B^\top X_i, i=1,\dots, n$ as the oracle transferred feature. Then the original linear regression model \eqref{Model target linear} becomes
\[
Y_i  = Z_i^\top \bm w + X_i^\top\bm \delta + \ve_i,i=1,\dots,n.
\]
By the condition $\bm \beta^{(k)\top}\Sigma\bm \delta = 0$ for every $1\le k\le K$, it follows that $E\{Z_i(X_i^\top \bm \delta)\} = 0$. That is, $Z_i$ is uncorrelated with $X_i^\top\bm \delta$.
 This implies that $\bm w$ can be estimated by simply considering the marginal regression of $Y_i$ on $Z_i$.
Inspired by the above discussions, we propose a novel estimation procedure as follows.

Let $\wh B = (\wh{ \bm \beta}^{(1)}, \dots, \wh {\bm \beta}^{(K)}) \in \mR^{p\times K}$ be the estimator of $B$, and define $\wh Z_i = \wh B^\top X_i, i =1,\dots,n$. Then $w$ can be estimated by regressing $Y_i$ on $\wh Z_i$. That is $\wh {\bm w}_{\text{ptl}} = \argmin_{\bm w} \mL_{w}(\bm w; \wh Z)$ with
 \[\mL_{w}(\bm w; \wh Z) = (2n)^{-1}\sum_{i=1}^n \Big(Y_i - \wh Z_i^\top \bm w\Big)^2.\]
 Denote the OLS estimator as $\wh {\bm w}_{\text{ptl}}$, specifically,
 \[
 \wh {\bm w}_{\text{ptl}}= \Big(\sum_{i=1}^n \wh Z_i \wh Z_i^\top \Big)^{-1}\Big(\sum_{i=1}^n \wh Z_i Y_i\Big).
 \]
 This leads to residuals $\wh e_i = Y_i - \wh Z_i^\top \wh {\bm w}_{\text{ptl}}$, which is referred to as a profiled response. Lastly, $\bm \delta$ can be estimated using LASSO by regressing $\wh e_i$'s on $X_i$'s as $\wh{\bm \delta}_{\text{ptl}} = \argmin_{\bm \delta} \mL_{\text{target},\lambda_\delta}(\bm \delta)$, where \[
\mL_{\text{target},\lambda_\delta}(\bm \delta) = (2n)^{-1}\sum_{i=1}^n \Big(\wh e_i - X_i^\top \bm \delta\Big)^2 + \lambda_\delta \|\bm \delta\|_1
\]
with tuning parameter $\lambda_\delta$.  Therefore, the final profiled transfer leaning (PTL)  estimator is given by
$$\wh{\bm \beta}_{\text{ptl}} = \wh B \wh {\bm w}_{\text{ptl}} + \wh{\bm \delta}_{\text{ptl}}.$$
We summarize the whole procedure in the following Algorithm 1.

\begin{algorithm}
	\KwIn{Source datasets $\{(X_i^{(k)}, Y_i^{(k)} ) \}_{i=1}^{n_k}$, $k= 1,\dots, K$ and the target dataset $\{(X_i,Y_i)\}_{i=1}^n$ }
	\KwOut{The PTL estimator $\wh{\bm \beta}_{\text{ptl}}$}
	
	\textit{Step 1. }Compute $
	\wh{\bm \beta}^{(k)} 
	= \argmin_{\bm \beta^{(k)}} (2n_k)^{-1}\sum_{i=1}^{n_k}(Y_i^{(k)} - X_i^{(k)\top}\bm \beta^{(k)})^2 + \lambda_k \|\bm \beta^{(k)}\|_1
$
	with $\lambda_k$ chosen by cross validation. Compute the estimated feature vectors $\wh Z_i = \wh B^\top X_i,i=1,\dots,n$.
	
	\textit{Step 2. }Compute $\wh {\bm w}_{\text{ptl}} = (\sum_{i=1}^n \wh Z_i \wh Z_i^\top)^{-1}(\sum_{i=1}^n \wh Z_i Y_i)$. Compute the profiled responses $\wh e_i = Y_i - \wh Z_i^\top \wh {\bm w}_{ptl},i=1,\dots,n$.
	
	\textit{Step 3.} Compute $\wh{\bm \delta}_{\text{ptl}} = \argmin_{\bm \delta} (2n)^{-1}\sum_{i=1}^n (\wh e_i - X_i^\top \bm \delta)^2 + \lambda_\delta \|\bm \delta\|_1$ with $\lambda_{\delta}$ chosen by cross validation.
	
	\textit{Step 4.} Compute $\wh{\bm \beta}_{\text{ptl}} = \wh B^\top \wh {\bm w}_{\text{ptl}} + \wh{\bm \delta}_{\text{ptl}}$. Output $\wh{\bm \beta}_{\text{ptl}}$.
	\caption{PTL Algorithm}
\end{algorithm}

\csection{STATISTICAL PROPERTIES}

\csubsection{Statistical Consistency}

We next study the statistical consistency of $\wh{\bm \beta}_{\text{ptl}}$. Define $\bm s = (s_{\max},s_{\delta})^\top \in\mR^{2}$. Consider the parameter space
\begin{gather*}
\Theta(\bm s, h) = \Big\{\bm \theta = (B, \beta):
\bm \beta = B\bm w + \bm \delta, \|\bm \beta^{(k)}\|_0 \le s_{\max},\bm \beta^{(k)\top}\Sigma \bm \delta = 0, k =1,\dots, K,\\ \|\bm w\| \le C_w, \|\bm \delta\|_1 \le h, \|\bm \delta\|_0 \le s_\delta \Big\},
\end{gather*}
where $C_w>0$ is a fixed constant. As one can see, the parameters contained in $\Theta(\bm s,h)$ are assumed to be sparse in the sense that $\|\bm\beta^{(k)}\|_0\le s_{\max},k=1,\dots,K$, $\|\bm\delta\|_1\le h$ and $\|\bm\delta\|_0 \le s_{\delta}$.
To establish the statistical properties for the proposed estimator, the following assumptions are needed.

\begin{itemize}
	\item [(C1)] \textsc{(Convergence Rate of Source Parameters)} Assume that for $k=1,\dots, K$, there exists constants $C_k$ and $C_k'$ such that $\|\wh{\bm \beta}^{(k)} - \bm \beta^{(k)}\|_2^2 \le C_k s_{\max}\log p/ n_k$ with probability at least $1-2\exp(-C_k'\log p)$.

	\item [(C2)] \textsc{(Divergence Rate)} Assume that $\sum_{k=1}^K s_{\max}\log p/n_k = o(1)$, $s_{\delta}\log p/n = o(1)$, $K = o(n\wedge p)$,  and $h = O(1)$.
	\item [(C3)] \textsc{(Sub-Gaussian Condition)} Assume that $X_i$ and $\ve_i$ are sub-Gaussian and there exists a positive constant $C_{\psi_2}>0$ such that $\|X_i\|_{\psi_2}\le C_{\psi_2}$ and $\|\ve_i\|_{\psi_2}\le C_{\psi_2}$.
	\item [(C4)] \textsc{(Covariance Matrices)} Assume that there exist absolute constants $0<C_{\min} <C_{\max} <\infty$ such that (1) $C_{\min}\le \lambda_{\min}(\Sigma) \le \lambda_{\max}(\Sigma)\le C_{\max}$, and (2) $C_{\min}\le \lambda_{\min}(B^\top B) \le \lambda_{\max}(B^\top B)\le C_{\max}$.
\end{itemize}

Condition (C1) is mild. Given the sparsity assumption of $\bm \beta^{(k)}$,  $\wh {\bm \beta}^{(k)}$ can be chosen as the LASSO estimator. Under some regularity conditions, one can verify that the statistical error of the LASSO estimator $\wh{\bm{\beta}}^{(k)}$ in terms of $\ell_2$ norm is $O_p\{(s_{\max} \log p/n_k)^{1/2}\}$; see Section 10.6 of \cite{vershynin2018high} and Section 7.3 of \cite{wainwright2019high}.
 Condition (C2) constraints the divergence rates of various quantities. Specifically, the condition $\sum_{k=1}^K s_{\max} \log p/ n_k = o(1)$  ensures that $\wh{\bm \beta}^{(k)}$'s converge uniformly over $k$. In practice, $K$ is generally much smaller than $n$ and $p$, and the assumption $K = o(n\wedge p)$ is considered mild. 
 When the source models are indeed helpful, it is natural to have the profiled target parameter $\bm \delta$ to be sufficiently ``small". As $\|\bm \delta\|_1$ is bounded, the conditions 
 $s_{\delta}\log p/ n = o(1)$ and $h = O(1)$ ensure that $\bm\delta$ can be consistently estimated.

Condition (C3) assumes the covariates and the error terms of the target follow a sub-Gaussian distribution with a uniformly bounded sub-Gaussian norm. This assumption is commonly employed in the literature \citep[etc.]{zhu2018sparse,gold2020inference, tian2022transfer}. 
The eigenvalue condition in (C4) is commonly assumed in the literature \citep[etc.]{cai2018accuracy,li2020transfer}. Based on (C4), it is easy to see that $\lambda_{\max}\{E(Z_iZ_i^\top )\} = \lambda_{\max}(B^\top \Sigma B) \le C_{\max}\lambda_{\max}(B^\top B)\le C_{\max}^2$.

With the help of those technical conditions, we can derive a non-asymptotic upper bound for $\wh{ \bm \beta}_{\text{ptl}}$. For convenience, define \[
r_n= \bigg(\sum_{k=1}^K \frac{s_{\max}\log p}{ n_k}\bigg)^{1/2} + \bigg(\frac Kn\bigg)^{1/2} + \bigg(\frac{\log p}n\bigg)^{1/2},
\] which is the upper bound of the quantity $\| n^{-1}\sum_{i=1}^n X_i(\wh e_i - X_i^\top\bm \delta)\|_{\infty}$. Note that $r_n$ contains three components. The first one is $(\sum_{k=1}^K s_{\max}\log p/ n_k )^{1/2}$, reflecting the statistical error in estimating the source model parameters. The second one is $(K/n)^{1/2}$, representing the statistical error in estimating $\bm w$. The third one is $(\log p/n)^{1/2}$,  which is the rate of $\|n^{-1}\sum_{i=1}^n X_i (e_i - X_i^\top \delta)\|_{\infty}$ where $e_i = Y_i - Z_i^\top w$ is the oracle profiled response when $Z_i$'s and $\bm w$ are known \citep{li2020transfer}. Then we have the following Theorem \ref{Theorem convergence rate}, the detailed proof of which is provided in Appendix C.

\begin{theorem}\label{Theorem convergence rate}
	Assume the conditions (C1)-(C4). Further assume that $\lambda_\delta\ge C_\lambda r_n$ for some sufficiently large constant $C_\lambda >0$. Then for some constant $C_1>0$, it holds that:  
	\beqrs
	\inf_{\theta \in \Theta(\bm s, h)}P\bigg(\|\wh {\bm \beta}_{\textnormal{ptl}} - \bm \beta\|^2 &\lesssim &\sum_{k=1}^K \frac{s_{\max} \log p}{n_k} + \frac{K+\log(p\wedge n)}n \\&+& s_{\delta} r_n^2\wedge hr_n\wedge h^2\bigg) \ge 1- \exp\{-C_1\log (p\wedge n)\}.
	\eeqrs
\end{theorem}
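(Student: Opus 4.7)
The overall strategy is to decompose the estimation error along the three pieces computed in Algorithm 1. Using $\bm \beta = B\bm w + \bm \delta$ and $\wh{\bm \beta}_{\textnormal{ptl}} = \wh B\wh{\bm w}_{\textnormal{ptl}} + \wh{\bm \delta}_{\textnormal{ptl}}$, I would write
\[
\wh{\bm \beta}_{\textnormal{ptl}} - \bm \beta \;=\; (\wh B - B)\bm w \;+\; \wh B(\wh{\bm w}_{\textnormal{ptl}} - \bm w) \;+\; (\wh{\bm \delta}_{\textnormal{ptl}} - \bm \delta),
\]
bound the $\ell_2$ norm of each summand on a common high-probability event, and combine via $(a+b+c)^2 \le 3(a^2+b^2+c^2)$. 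These three terms account respectively for the three components $\sum_k s_{\max}\log p/n_k$, $(K+\log(p\wedge n))/n$, and $s_{\delta} r_n^2 \wedge hr_n\wedge h^2$ appearing in the stated bound.

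For the first summand, $(\wh B - B)\bm w = \sum_{k=1}^K w_k(\wh{\bm \beta}^{(k)} - \bm \beta^{(k)})$, so Cauchy-Schwarz together with $\|\bm w\| \le C_w$ and Condition (C1) directly delivers the rate $\sum_k s_{\max}\log p/n_k$. For the second summand, the OLS normal equations give
\[
\wh{\bm w}_{\textnormal{ptl}} - \bm w \;=\; \Big(n^{-1}\sum_i \wh Z_i \wh Z_i^\top\Big)^{-1} n^{-1}\sum_i \wh Z_i\bigl(\ve_i + X_i^\top\bm \delta - (\wh Z_i - Z_i)^\top \bm w\bigr).
\]
Condition (C4) together with (C1)-(C2) makes the inverse well-conditioned with high probability. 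The identification condition $B^\top \Sigma \bm \delta = 0$ is now crucial: it forces $E[Z_i(X_i^\top\bm \delta)] = 0$, so that the dominant piece $n^{-1}\sum_i Z_i(\ve_i + X_i^\top\bm\delta)$ is a zero-mean sub-Gaussian sum in $\mR^K$ and concentrates at rate $\sqrt{(K+\log(p\wedge n))/n}$ by Bernstein-type arguments applied at confidence level $(p\wedge n)^{-C_1}$. The residual pieces involving $\wh Z_i - Z_i = (\wh B - B)^\top X_i$ are absorbed into the already-accounted-for rate $\sum_k s_{\max}\log p/n_k$ under (C1)-(C2).

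The most delicate summand is the LASSO error $\wh{\bm \delta}_{\textnormal{ptl}} - \bm \delta$. Its effective noise is
\[
\xi_i \;=\; \wh e_i - X_i^\top\bm \delta \;=\; \ve_i \;-\; (\wh Z_i - Z_i)^\top \bm w \;-\; \wh Z_i^\top(\wh{\bm w}_{\textnormal{ptl}} - \bm w),
\]
and the pivotal step is verifying $\|n^{-1}\sum_i X_i \xi_i\|_\infty \lesssim r_n$. The three summands of $\xi_i$ contribute exactly the three pieces of $r_n$: a standard maximal inequality gives $\|n^{-1}\sum_i X_i \ve_i\|_\infty \lesssim \sqrt{\log p/n}$; the middle term concentrates around $\Sigma (\wh B - B)\bm w$, which is controlled by $\|\wh B - B\|_F \|\bm w\|$ and hence by $\sqrt{\sum_k s_{\max}\log p/n_k}$; and the last term inherits the OLS rate $\sqrt{K/n}$ from the previous step. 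Combined with a restricted-eigenvalue property for $n^{-1}\sum_i X_iX_i^\top$ (available from (C3)-(C4) via standard sub-Gaussian constructions), the classical LASSO inequalities under the choice $\lambda_\delta \ge C_\lambda r_n$ yield the fast-rate bound $s_\delta r_n^2$. The slow-rate bound $h r_n$ is obtained by dropping sparsity and using $\|\bm\delta\|_1 \le h$ in the LASSO basic inequality, and the trivial bound $h^2$ follows from the LASSO cone condition $\|\wh{\bm \delta}_{\textnormal{ptl}} - \bm \delta\|_1 \lesssim \|\bm \delta\|_1 \le h$ combined with $\|\cdot\|_2 \le \|\cdot\|_1$.

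The main obstacle will be handling the data-dependence of $\xi_i$ on $\wh{\bm w}_{\textnormal{ptl}}$ when controlling the cross term $n^{-1}\sum_i X_i \wh Z_i^\top(\wh{\bm w}_{\textnormal{ptl}} - \bm w)$ in $\ell_\infty$. I would bound its $\ell_\infty$ coordinate-wise by $\|\wh{\bm w}_{\textnormal{ptl}} - \bm w\| \cdot \max_j \|n^{-1}\sum_i \wh Z_i X_{ij}\|_2$, using the bound on $\|\wh{\bm w}_{\textnormal{ptl}} - \bm w\|$ from the second step together with uniform sub-Gaussian concentration of $n^{-1}\sum_i \wh Z_i X_{ij}$ around $\wh B^\top \Sigma_{\cdot j}$ over the $p$ coordinates $j$. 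A final union bound over the three success events (for $\wh B - B$, for $\wh{\bm w}_{\textnormal{ptl}} - \bm w$, and for the LASSO restricted-eigenvalue / noise-inner-product guarantees) then delivers the claimed probability $1 - \exp\{-C_1 \log(p\wedge n)\}$ uniformly over $\Theta(\bm s, h)$.
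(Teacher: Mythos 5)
Your proposal follows essentially the same route as the paper's proof: the same decomposition of $\wh{\bm \beta}_{\textnormal{ptl}} - \bm \beta$ into the $\wh B$, $\wh{\bm w}_{\textnormal{ptl}}$, and $\wh{\bm \delta}_{\textnormal{ptl}}$ error terms, the same normal-equation analysis of $\wh{\bm w}_{\textnormal{ptl}}$ exploiting $B^\top\Sigma\bm\delta = 0$, and the same pivotal bound $\|n^{-1}\sum_i X_i(\wh e_i - X_i^\top\bm\delta)\|_\infty \lesssim r_n$ feeding into the restricted-eigenvalue LASSO analysis that yields the three-way minimum $s_\delta r_n^2 \wedge h r_n \wedge h^2$. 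The only cosmetic difference is the direction in which you apply Cauchy--Schwarz to the cross term $n^{-1}\sum_i X_i\wh Z_i^\top(\wh{\bm w}_{\textnormal{ptl}} - \bm w)$, which does not change the argument.
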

 By Theorem \ref{Theorem convergence rate}, we know that the estimation error as measured by $\|\wh{\bm \beta}_{\text{ptl}} - \bm \beta\|^2$ can be decomposed into three parts. The first part $\sum_{k=1}^K s_{\max}\log  p/ n_k$ is from the estimation error of $\wh B$ and the second term $\{K+\log (p\wedge n)\}/n$ is from the error of $\wh {\bm w}_{\text{ptl}}$. Specifically, when $B$ is known, the OLS estimator has error $K/n$, and  the additional term $\log(p\wedge n)/n $ is introduced to guarantee the non-asymptotic bound holds with high probability.  The third part $(s_{\delta} r_n^2\wedge hr_n\wedge h^2)$ is the error of $\|\wh{\bm \delta}_{\text{ptl}} - \bm \delta\|^2$.  If $\sum_{k=1}^K s_k /n_k = o(K/n)$ and $K = O(\log p)$, then we have $r_n = O\{(\log p / n)^{1/2}\}$. Consequently, $\|\wh {\bm \delta}_{\text{ptl}} - \bm \delta\|^2$ has order $(s_{\delta}\log p/n)\wedge h(\log p/n)^{1/2}\wedge h^2$, which represents the optimal convergence rate of $\wh{\bm \delta}_{\text{ptl}}$ when $\bm w$ is known. 

Next we consider how the profiled transfer learning estimator improves the estimation performance. Recall that the convergence rate of the LASSO estimator in terms of $\ell_2$ norm computed on the target data only is $(s_0\log p/n)^{1/2}$ \citep{meinshausen2008lasso,bickel2009simultaneous,vershynin2018high,wainwright2019high}. Then Corollary \ref{Corollary compare lasso} provides some sufficient conditions under which the transfer learning estimator might converge faster than a standard LASSO estimator. 

\begin{corollary}
	Assume the same conditions in Theorem \ref{Theorem convergence rate}. Further assume that (1) $\sum_{k=1}^K s_{\max}/n_k = o(s_0/n)$, (2) $K = o(s_0\log p)$, and (3) (i) when $h > s_{\delta} r_n$,  $\bm \delta$ is sparse with $s_{\delta} = O(1)$, (ii) when $h < s_{\delta} r_n$, $h = o\{(s_0\log p/n)^{1/2}\}$, we have $\|\wh {\bm \beta}_{\textnormal{ptl}} - \bm \beta\| = o_p\{(s_0\log p/n)^{1/2}\}$. \label{Corollary compare lasso}
\end{corollary}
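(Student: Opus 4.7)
The plan is to apply Theorem \ref{Theorem convergence rate} directly and verify that, under the strengthened conditions (1)--(3), each of the three terms on the right-hand side of that bound is $o(s_0\log p/n)$; taking square roots then yields the stated conclusion $\|\wh{\bm\beta}_{\text{ptl}} - \bm\beta\| = o_p\{(s_0\log p/n)^{1/2}\}$.

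First I would dispose of the term $\sum_{k=1}^K s_{\max}\log p/n_k$, which by condition (1) is immediately $o(s_0\log p/n)$. For the term $\{K+\log(p\wedge n)\}/n$, condition (2) gives $K/n = o(s_0\log p/n)$, while $\log(p\wedge n)/n \le \log p/n$ is of order at most $s_0\log p/n$ and is strictly $o(s_0\log p/n)$ whenever $s_0 \to \infty$, which is the non-trivial regime for a meaningful comparison with the standard LASSO rate.

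The main work lies in the third term $s_\delta r_n^2 \wedge h r_n \wedge h^2$, which must be bounded by splitting according to the two subcases in condition (3). As a preliminary, the two displays above show that $r_n^2 = \sum_{k=1}^K s_{\max}\log p/n_k + K/n + \log p/n = o(s_0\log p/n)$, hence $r_n = o\{(s_0\log p/n)^{1/2}\}$. In subcase (i), where $h > s_\delta r_n$ and $s_\delta = O(1)$, I would upper bound the three-way minimum by $s_\delta r_n^2$, which is $O(r_n^2) = o(s_0\log p/n)$. In subcase (ii), where $h < s_\delta r_n$ and $h = o\{(s_0\log p/n)^{1/2}\}$, I would bound the minimum by $h^2$, which is immediately $o(s_0\log p/n)$; as a sanity check, the competing branch also satisfies $h r_n = o\{(s_0\log p/n)^{1/2}\}\cdot o\{(s_0\log p/n)^{1/2}\} = o(s_0\log p/n)$, so the choice of representative branch is immaterial.

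The main obstacle, if any, is not technical but bookkeeping: one must carefully match the three-way minimum in the Theorem \ref{Theorem convergence rate} bound to the two subcases of condition (3) and confirm that the dominating branch is $o(s_0\log p/n)$ in each scenario. No new probabilistic tools are needed beyond Theorem \ref{Theorem convergence rate} itself, and the rate $o_p\{(s_0\log p/n)^{1/2}\}$ follows upon taking square roots and absorbing the high-probability event $1 - \exp\{-C_1\log(p\wedge n)\}$ into the $o_p$ statement.
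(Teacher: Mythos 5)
Your proposal is correct and follows essentially the same route as the paper's proof in Appendix D: substitute conditions (1)--(3) into the bound of Theorem \ref{Theorem convergence rate}, verify that each of the three terms is $o(s_0\log p/n)$, and take square roots. If anything you are slightly more careful than the paper, which asserts $r_n^2 = o(s_0\log p/n)$ without noting that the $\log(p\wedge n)/n$ component requires $s_0\to\infty$, and which writes $s_\delta = o(1)$ where the stated $s_\delta = O(1)$ already suffices.
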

The detailed proof of Corollary \ref{Corollary compare lasso} is provided in Appendix D. The first condition requires that the estimation error of the source parameters is sufficiently small, which holds if the source parameters are sufficiently sparse or the source sample sizes are sufficiently large. The second condition constraints the number of source datasets to be much smaller than $s_0\log p$, which is easier to be satisfied for a relatively large $s_0$. The third condition requires that the residual part $\bm\delta$ should be sufficiently sparse with $s_{\delta} = O(1)$ or sufficiently small with $h = o\{(s_0\log p/n)^{1/2}\}$, which implies the \textit{approximate-linear} assumption holds. 

We compare our estimator with the Trans-LASSO estimator $\wh{\bm \beta}_{\text{tlasso}}$ proposed by \cite{li2020transfer}, which with high probability satisfies $$\|\wh{\bm \beta}_{\text{tlasso}}  - \bm\beta \|^2\lesssim  \frac{s_0\log p}{n + n_{\mathcal A}} + \Big(\frac{s_0\log p} n \Big)\wedge h_0 \Big(\frac{\log p}n\Big)^{1/2} \wedge h_0^2,$$
 where $\mathcal A = \{k: \|\bm \beta^{(k)} - \bm\beta\|_1 \le h_0 \}$ is the informative set, containing the sources whose parameters differ with target parameter by at most $h_0$ in $\ell_1$ norm, and $n_{\mathcal A} = \sum_{k\in \mathcal A} n_k$ represents the total sample sizes of the informative set. For simplicity, we only consider the case that $h_0 >h\vee (\log p / n)^{1/2}$ since the \textit{vanishing-difference} assumption is more stringent in applications. Corollary \ref{Corollary compare trans} provides some sufficient conditions under which the convergence rate of PTL estimator is faster than that of Trans-Lasso. 
 \begin{corollary}
 	 Assume the same conditions in Theorem \ref{Theorem convergence rate}. Further assume that $n_{\mathcal A} = \nu n^{\xi}$ for some constants $\nu >0 $ and $\xi>1$ and $s_0 n / n_{\mathcal A} = O(1)$. Define $m_0 = h_0n^{1/2}/s_0(\log p )^{1/2}$. Further assume that 
 	\begin{itemize}
 		\item [(1)] $\sum_{k=1}^K s_{\max}/n_k = o(s_0/n^\xi )$,
 		\item [(2)] $K = o(n^{-(\xi-1)}s_0\log p )$, and
 		\item [(3)]
 		\begin{itemize}
 			\item [(i)] If $h > s_{\delta}r_n$, it is required that 
 			$s_{\delta} = o\{s_0 ( m_0\wedge 1)\}$, 
 			\item [(ii)] If $h < s_{\delta }r_n$, it is required that 
 			$h = o\{(s_0\log p/n)^{1/2} (m_0^{1/2}\wedge 1)\}$,
 		\end{itemize} 
 	\end{itemize}
 	then we have $\|\wh {\bm \beta}_{\textnormal{ptl}} - \bm \beta\| = o_p(\|\wh{\bm \beta}_{\textnormal{tlasso}} - \bm \beta\|)$.  \label{Corollary compare trans}
 \end{corollary}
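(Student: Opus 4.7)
The plan is to combine the PTL upper bound from Theorem~\ref{Theorem convergence rate} with the Trans-LASSO bound stated just before the corollary and show term by term that the former is $o(\cdot)$ of the latter. First I would simplify the Trans-LASSO rate under the stated regime. Since $\xi>1$, we have $n+n_{\mathcal A}\asymp n_{\mathcal A}=\nu n^{\xi}$, and $s_0 n/n_{\mathcal A}=O(1)$ forces $s_0\log p/n_{\mathcal A}=O(\log p/n)$. Because $h_0>(\log p/n)^{1/2}$, $h_0\sqrt{\log p/n}\le h_0^{2}$, so the three-way minimum in the Trans-LASSO bound collapses to $\min\{s_0\log p/n,\,h_0\sqrt{\log p/n}\}$; by the definition of $m_0$ this equals $s_0\log p/n$ when $m_0\ge 1$ and $h_0\sqrt{\log p/n}$ when $m_0<1$. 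In either case, the squared Trans-LASSO rate $T$ is of order $s_0\log p/n_{\mathcal A}+s_0(\log p/n)(m_0\wedge 1)$.

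Next I would apply Theorem~\ref{Theorem convergence rate} and decompose its right-hand side into three pieces: the source-estimation error $\sum_{k}s_{\max}\log p/n_k$, the profiled-weight error $\{K+\log(p\wedge n)\}/n$, and the residual error $\min(s_\delta r_n^{2},\,hr_n,\,h^{2})$. Condition (1) gives the first as $o(s_0\log p/n^{\xi})=o(s_0\log p/n_{\mathcal A})=o(T)$. Conditions (1)--(2) also yield $r_n^{2}\asymp\log p/n$. For the second piece, condition (2) gives $K/n=o(s_0\log p/n^{\xi})=o(T)$, while $\log(p\wedge n)/n\le \log p/n$ is dominated because $T$ contains either $s_0\log p/n$ when $m_0\ge 1$ or $h_0\sqrt{\log p/n}\ge\log p/n$ when $m_0<1$ (using $h_0\ge\sqrt{\log p/n}$).

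For the residual term I would split on condition (3). In case (3)(i) the minimum is at most $s_\delta r_n^{2}\asymp s_\delta\log p/n$, and condition (3)(i) gives $s_\delta=o\{s_0(m_0\wedge 1)\}$, so $s_\delta\log p/n=o\{s_0(\log p/n)(m_0\wedge 1)\}=o(T)$, matching the appropriate $m_0$ regime of $T$. In case (3)(ii) the minimum is at most $hr_n\asymp h\sqrt{\log p/n}$, and condition (3)(ii) gives $h=o\{(s_0\log p/n)^{1/2}(m_0^{1/2}\wedge 1)\}$; squaring and multiplying by $\log p/n$ yields $(h\sqrt{\log p/n})^{2}=o\{s_0(\log p/n)^{2}(m_0\wedge 1)\}$, which is $o(T)$ in each $m_0$ regime. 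Combining the three pieces produces the claimed rate, and the probabilistic statement follows from a union bound over the high-probability events on which Theorem~\ref{Theorem convergence rate} and the Trans-LASSO bound both hold.

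The main obstacle is the bookkeeping of the two case splits --- which branch of the minimum in Theorem~\ref{Theorem convergence rate} is active (condition (3)(i) versus (3)(ii)) and the regime of $m_0$ in the Trans-LASSO bound. Condition (3) is in effect tailored so that, in every combination of regimes, the dominant $o$-factor in the PTL residual matches the appropriate leading term of $T$; the cleanest way to execute the proof is to verify the dominance inequality separately in each of the four resulting cases and then assemble the pieces.
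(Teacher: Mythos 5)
Your argument is correct and follows essentially the same route as the paper's own proof: reduce the Trans-LASSO rate to $s_0\log p/(n+n_{\mathcal A})+s_0(\log p/n)(m_0\wedge 1)$, use conditions (1)--(2) to kill the source-estimation and $K/n$ terms and to get $r_n^2=O(\log p/n)$, then case-split on condition (3) to dominate the residual term $s_\delta r_n^2\wedge hr_n\wedge h^2$ (the paper organizes the split as $h>s_\delta r_n$, $r_n<h<s_\delta r_n$, $h<r_n$, but the substance is identical, and both arguments implicitly rely on $s_0(m_0\wedge 1)\ge 1$, which follows from $s_0\ge 1$ and $h_0\ge(\log p/n)^{1/2}$). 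Your explicit treatment of the $\log(p\wedge n)/n$ term only yields $O(\cdot)$ rather than $o(\cdot)$ of the Trans-LASSO rate, but the paper's proof silently omits this term altogether, so this is not a gap relative to the reference argument.
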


The detailed proof of Corollary \ref{Corollary compare trans} is provided in Appendix E. For simplicity, we consider the case $n \lesssim n_{\mathcal A}$. The first and second conditions hold when $n_{\mathcal A}$ is not too large. In this case, most of $\bm \beta^{(k)}$'s are quite different from $\bm \beta$. This is expected since when $h_0$ is small, it is hard for us to find enough sources with parameters satisfying the \textit{vanishing-difference} assumption.
 The third condition restricts the divergence rate of $s_\delta$ and $h$. When $\bm \delta$ is more sparse than $\bm\beta$ and $\|\bm\delta\|_1$ is quite small, the PTL estimator performs better than the Trans-LASSO estimator.

\csubsection{Minimax Lower Bound}

 Theorem \ref{Theorem convergence rate} provides a probabilistic upper bound for the convergence rate of the profiled transfer learning estimator $\wh{\bm \beta}_{\text{ptl}}$. Following \cite{li2020transfer} and \cite{tian2022transfer}, we are motivated to derive a minimax type lower bound, which  is independent of the specific estimating procedure used. This leads to the following theorem.

\begin{theorem}
	Assume the technical conditions (C2) and (C4). Further assume that (i) $\ve_i^{(k)}$'s and $\ve_i$'s are independently generated according to the normal distribution $N(0,\sigma^2_k)$ and $N(0,\sigma^2)$, respectively and  (ii) $\lambda_{\max}(\Sigma_k )\le C_{\max}$ for $k = 1,\dots, K$. Then, for some positive  constants $C_1$, $C_2$ and $C_3$, it holds  that \label{Theorem minimax}
	\beqrs
		\inf_{\wh{\bm \beta}} \sup_{\bm \beta \in \Theta(\bm s,h )} P\bigg(\|\wh{\bm \beta} -\bm \beta\|^2 &\ge &C_1\sum_{k=1}^K \frac{s_{\max}\log p}{n_k \vee n} + C_2 \frac{K}{n} \\&+& C_3 \Big(\frac{s_\delta\log p}{n}\Big)\wedge h\Big(\frac{\log p}{n}\Big)^{1/2}\wedge h^2\bigg) \ge \frac 12.
	\eeqrs
\end{theorem}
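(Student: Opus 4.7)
The overall strategy is to decompose the minimax lower bound into three sub-problems, one for each rate term, and use a standard maximum-to-sum reduction at the end (since the supremum over a union of sub-families is at least the sum, up to a factor of three). For each sub-problem, I would construct a sub-family of $\Theta(\bm s, h)$ that varies exactly one of the three components $B$, $\bm w$, or $\bm \delta$ while freezing the other two at values chosen to satisfy the identification condition $\bm\beta^{(k)\top}\Sigma\bm\delta=0$ automatically. The information-theoretic tool is Fano's lemma for the high-dimensional sparse components, together with a Le Cam two-point (or small-packing Fano) argument for the $K$-dimensional $\bm w$ component. The likelihood seen by an estimator combines the $n$ target observations with the $\sum_k n_k$ source observations, so when bounding KL divergences I would carefully track which datasets a given perturbation affects.

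For the first rate $\sum_{k=1}^K s_{\max}\log p/(n_k\vee n)$, I would fix $\bm \delta = 0$ and a nominal $(B_0, \bm w_0)$ with $w_{0,k}$ of order one and the columns of $B_0$ supported on disjoint index sets satisfying (C4). For each $k$, perturb column $k$ by a vector $u^{(k)}$ that is $s_{\max}$-sparse and supported off $\mbox{supp}(\bm\beta_0^{(k)})$, drawn from a Varshamov--Gilbert packing of the $s_{\max}$-sparse sphere with mutual $\ell_2$-separation of order $(s_{\max}\log p/(n_k\vee n))^{1/2}$. This perturbation changes $\bm\beta^{(k)}$ by $u^{(k)}$ (affecting the source-$k$ likelihood through $n_k$ i.i.d.\ observations) and changes $\bm\beta$ by $w_{0,k}u^{(k)}$ (affecting the target likelihood through $n$ i.i.d.\ observations); adding these two Gaussian KL contributions and applying Fano yields the source-$k$ lower bound $s_{\max}\log p/(n_k\vee n)$, and summing over $k$ delivers the first term. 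For the $K/n$ term, I would keep $B=B_0$, $\bm\delta=0$, and run a standard Fano argument over a $(K/n)^{1/2}$-scale packing of the Euclidean unit ball in $\mathbb{R}^K$ for $\bm w$; the effective design matrix in the target likelihood is $Z = B_0^\top X$ whose covariance $B_0^\top\Sigma B_0$ is well-conditioned by (C4), so the classical parametric rate $K/n$ follows.

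For the third term $(s_\delta\log p/n)\wedge h(\log p/n)^{1/2}\wedge h^2$, I would set $\bm w=0$ so that $\bm\beta=\bm\delta$ and the identifiability constraint reduces to $B^\top\Sigma\bm\delta=0$, which confines $\bm\delta$ to a fixed $(p-K)$-dimensional subspace $V$. Inside $V$ I would build an $s_\delta$-sparse $\ell_1$-bounded packing by taking standard sparse-vector packings in a coordinate subset disjoint from the supports of the columns of $B_0$ and projecting onto $V$; because $K = o(p)$ this projection is close to the identity on the chosen coordinates and preserves separation up to a constant. A Fano argument on the target regression alone then produces $s_\delta\log p/n$ when $h\gtrsim (s_\delta\log p/n)^{1/2}$ (the $\ell_1$ constraint is inactive), the dense rate $h(\log p/n)^{1/2}$ at the intermediate scale (using $\ell_1$-ball packings à la \cite{li2020transfer}), and the trivial rate $h^2$ in the very small-$h$ regime via a two-point argument between $\bm\delta=0$ and $\bm\delta$ of norm $h$.

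The main obstacle is enforcing the identification condition $\bm\beta^{(k)\top}\Sigma\bm\delta=0$ simultaneously while perturbing; choosing $\bm\delta=0$ kills it for sub-problems one and two, but for sub-problem three I must restrict to the subspace $V$, and verifying that sparse/$\ell_1$-bounded packings survive this restriction under (C4) and $K=o(p)$ will require a careful, but essentially routine, perturbation bound. A secondary technical point is that in the first sub-problem the KL contribution from the target is $n\|w_{0,k}u^{(k)}\|_\Sigma^2$ while that from source $k$ is $n_k\|u^{(k)}\|_{\Sigma_k}^2$; getting the $n_k\vee n$ (rather than $n_k+n$) in the denominator just requires matching constants, but needs the condition $\lambda_{\max}(\Sigma_k)\le C_{\max}$ from the theorem hypotheses.
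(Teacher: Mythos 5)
Your proposal follows essentially the same route as the paper's proof in Appendix G: the same three-part decomposition combined with a max-to-sum reduction, the same Fano-plus-Gaussian-KL machinery (Lemmas \ref{Lemma Fano}--\ref{Lemma KL divergence}), sparse Varshamov--Gilbert-type packings for a single column of $B$ at a time with the KL accounting for both the $n_k$ source observations and the $n$ target observations (which is exactly how the $n_k\vee n$ denominator arises), a local packing of a $K$-dimensional ball for $\bm w$, and sparse/$\ell_1$-constrained packings of $\bm \delta$ confined to the subspace $\{\bm \delta: B^\top\Sigma\bm\delta=0\}$. The only cosmetic difference in the third sub-problem is that the paper realizes this subspace exactly rather than approximately: it takes $\bm\beta^{(k)}=e_k$ and $\Sigma=I_p$, so the identification constraint simply zeroes out the first $K$ coordinates of $\bm\delta$ and no projection/perturbation bound is needed; your ``coordinates disjoint from the supports of $B_0$'' idea achieves the same thing if you also fix $\Sigma$ suitably.

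One step as you describe it would fail: the $h^2$ regime of the third term. A Le Cam two-point argument between $\bm\delta=0$ and a single $\bm\delta$ of norm $h$ requires the KL divergence, which is of order $nh^2/\sigma^2$, to be $O(1)$; but the relevant regime is only $h^2\lesssim \log p/n$, so $nh^2$ can be as large as $\log p$ and the two hypotheses become essentially perfectly distinguishable. The paper handles this case (Step 3.1) with Fano over the $\asymp p-K$ one-sparse vectors scaled by $h$, so that the packing entropy $\log(p-K)$ in the denominator dominates the pairwise KL of order $nh^2\le\log p$ in the numerator. Since you already deploy exactly this kind of sparse packing for the other two regimes of the third term, the fix is immediate, but as written the two-point argument does not cover the whole regime $h^2<\log p/n$.
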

The detailed proof of Theorem \ref{Theorem minimax} is provided in Appendix G. The normality assumption of error term is used to facilitate the technical treatment \citep{raskutti2011minimax,rigollet2011exponential}. Combining Theorems \ref{Theorem convergence rate} and \ref{Theorem minimax}, we can draw the following conclusions. For the resulting $\wh {\bm \beta}_{\text{ptl}}$ to be minimax optimal, a sufficient condition is $r_n \le C_r (\log p/n )^{1/2}$ for some constant $C_r>0$, which leads to two additional requirements for the parameters. (i) The estimation errors due to the source parameter estimates must be well controlled in the sense that $\sum_{k=1}^K s_{\max}/n_k = O(1/n)$; 
(ii) the number of source datasets cannot be too large in the sense that $K = O(\log p)$. 
These conditions are expected; otherwise $\bm w$ and $\bm\delta$ cannot be estimated well.

Under the aforementioned two conditions (i) and (ii), we can rewrite the probabilistic upper bound of $\|\wh{\bm \beta}_{\text{ptl}} - \bm \beta\|^2$ and minimax lower bound for $\Theta(\bm s,h)$ as
\beqrs
\inf_{\theta \in \Theta(\bm s, h)}P\bigg(\|\wh{\bm \beta}_{\text{ptl}} - \bm \beta\|^2 &\lesssim& \sum_{k=1}^K \frac{s_{\max}\log p}{n_k} +  \frac{K+\log(p\wedge n)}n \\&+& \Big(\frac{s_\delta \log p}n\Big)\wedge h\Big(\frac{\log p}n\Big)^{1/2}\wedge h^2\bigg ) \ge 1 - \exp\{\log(p\wedge n)\}.
\eeqrs
It is then of great interest to study the conditions, under which the upper bound of $\|\wh{\bm \beta}_{\text{ptl}} - \bm \beta\|^2$ and minimax lower bound of our transfer learning problem match each other in terms of the convergence rate. This leads to the following different cases.

\textbf{Case 1.} The \textit{approximate-linear} assumption holds well in the sense $h< (\log p/n)^{1/2}$. 
This scenario implies that the target parameter can be approximated by the linear combination of source parameters with asymptotically negligible error. 
In this case, we have $(s_\delta \log p/n)\wedge h(\log p/n)^{1/2}\wedge h^2 = h^2 < \log p/n$. Next by Theorems \ref{Theorem convergence rate} and \ref{Theorem minimax}, we obtain the upper bound of order \[
\sum_{k=1}^K \frac{s_{\max}\log p}{n_k} + \frac{K+\log(p\wedge n)}n +  h^2\] and lower bound of order \[\sum_{k=1}^K \frac{s_{\max}\log p}{n_k \vee n} + \frac Kn + h^2.\] 
For simplicity, we assume that $n_k\ge n$, which is often the case in application. 
These two bounds match each other if one of the following condition holds: (i) $\log(p\wedge n)\lesssim K\lesssim \log p$ and (ii) $\log (p\wedge n) /n\lesssim(\sum_{k=1}^K s_{\max}\log p/n_k) \wedge h^2$ and $n \lesssim n_k$ for $k = 1\dots, K$.

\textbf{Case 2.} If $(\log p /n)^{1/2} \le h\le s_{\delta}(\log p/n)^{1/2}$, we then have $(s_\delta \log p/n)\wedge h(\log p/n)^{1/2}\\\wedge h^2 = h(\log p/n)^{1/2} \ge \log p/n$. Then the upper bound 
and the minimax lower bound match each other at the rate of $h(\log p/n)^{1/2}$. 
This allows the $\bm \delta$-vector to be relatively dense in the sense that $s_{\delta} \ge h(\log p/n)^{-1/2}$. 

\textbf{Case 3.} If $h > s_{\delta}(\log p/n)^{1/2}$, we then have $(s_\delta \log p/n)\wedge h(\log p/n)^{1/2}\wedge h^2 = s_\delta \log p/n \ge \log p/n$. Accordingly, the upper bound 
and the minimax lower bound 
match each other at the rate of $s_\delta \log p/n$. In this case, the $\bm \delta$-vector is sparse in the sense $\|\bm \delta\|_0 < h(\log p/n)^{-1/2}$. 

To summarize, the minimax lower bound of the transfer learning problem of interest can be achieved by our proposed PTL estimator $\wh{\bm \beta}_{\text{ptl}}$, provided that $\bm \delta$ is either sufficiently sparse or reasonably small but detectable in $\ell_1$-norm.

 \csection{NUMERICAL RESULTS}

 \csubsection{Preliminary Setup}

 To evaluate the finite sample performance of $\wh{\bm \beta}_{\text{ptl}}$, we present here a number of simulation experiments. Specifically, we examine the finite sample performance of three estimators:    (1) the LASSO estimator using the target data only, denoted as  $\wh{\bm \beta}_{\text{lasso}}$, (2) the Trans-LASSO estimator proposed in \cite{li2020transfer} $\wh{\bm \beta}_{\text{tlasso}}$, and (3) our PTL estimator $\wh{\bm \beta}_{\text{ptl}}$.

 In the whole study, we fix $K = 5$, $p = 500$ and $h = 6$. The source data are i.i.d. observations generated from the model $Y_i^{(k)} = X_i^\top {\bm \beta}^{(k)} + \ve_i^{(k)}$, where $X_i^{(k)} \sim N(0,\Sigma_k)$ and $\ve_i^{(k)} \sim N(0,\sigma_k^2)$. The target data are generated similarly but with $\cov(X_i) = \Sigma$, $\var(\ve_i) = \sigma^2$ and the regression coefficient to be $\bm \beta$. The parameters $\bm \beta^{(k)}$, $\bm\beta$, $\sigma_k^2$, $\sigma^2$, $\Sigma_k$ and $\Sigma$ will be specified later for different scenarios. For each setting, we replicate the experiment for a total of $B = 500$ times.

For the PTL estimator, five-fold cross validation is used again to choose all the shrinkage parameters $\lambda_k$'s and $\lambda_\delta$.  We use a generic notation $\wh{\bm \beta}^{[b]} = (\wh\beta^{[b]}_1,\dots,\wh\beta^{[b]}_p)^\top  \in \mR^p$ to represent one particular estimator (e.g. the PTL estimator) obtained in the $b$th replication with $1\le b\le B$. We next define the mean squared error (MSE) as $\mbox{MSE}(\wh{\bm \beta}^{[b]})= p^{-1}\|\wh{\bm \beta}^{[b]} -\bm  \beta\|^2 = p^{-1}\sum_{j=1}^p (\wh\beta^{[b]}_j - \beta_j)^2$. This leads to a total of $B$ MSE values for each simulation example, which are then summarized by boxplots.

\csubsection{Simulation Models}

We consider here four simulation examples. In Example 1, we set $\supp(\bm\beta^{(k)})$'s to be nearly the same as $\supp(\bm \beta)$ \citep{li2020transfer}. In Example 2, $\supp(\bm\beta^{(k)})$'s are significantly different from the $\supp(\bm\beta)$. For the first two examples, $\Sigma_k$'s are identical to $\Sigma$ and $\sigma_k$'s to $\sigma$. In Example 3, $\Sigma_k$'s are significantly different from $\Sigma$, and $\sigma_k$'s from $\sigma$. In example 4, we evaluate the case when data generation procedure violates the identification condition. 

\begin{figure}[t]
	\centering
	\subfloat[$N=1500$ and $s=40$ ]{\includegraphics[width=0.5\textwidth]{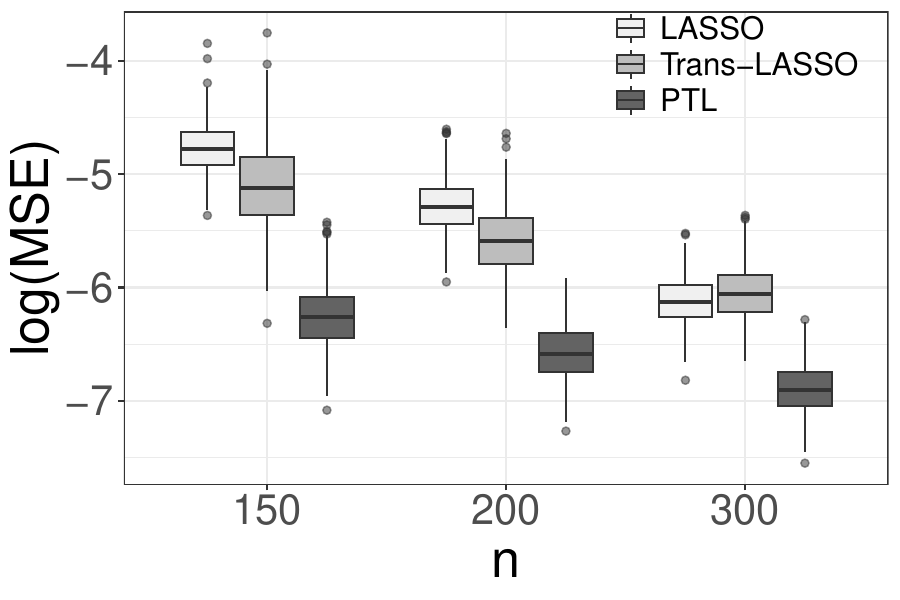}}
	\subfloat[$n=200$ and $s = 40$ ]{\includegraphics[width=0.5\textwidth]{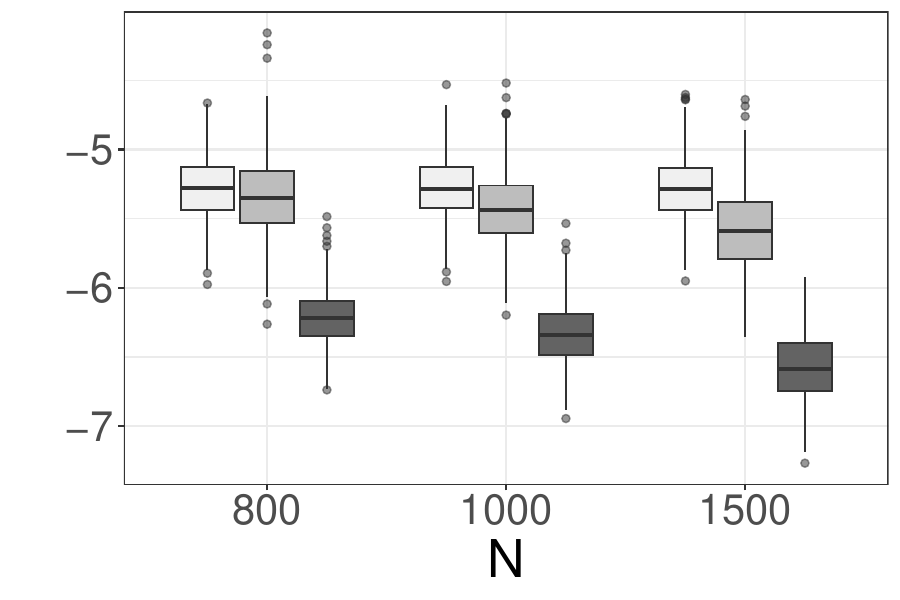}}\\
	\subfloat[$N = 1500$ and $s = 300$ ]{\includegraphics[width=0.5\textwidth]{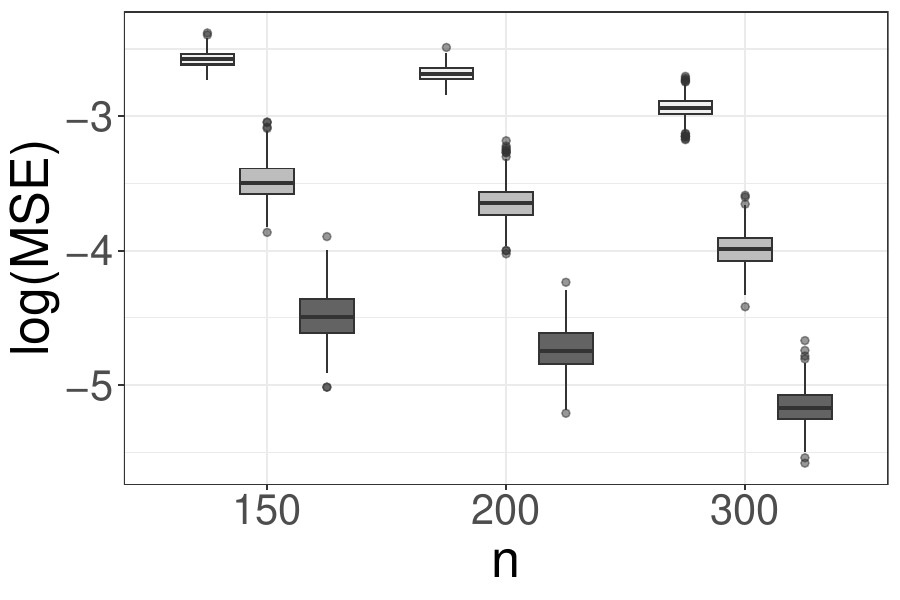}}
	\subfloat[$n=200$ and $s = 300$  ]{\includegraphics[width=0.5\textwidth]{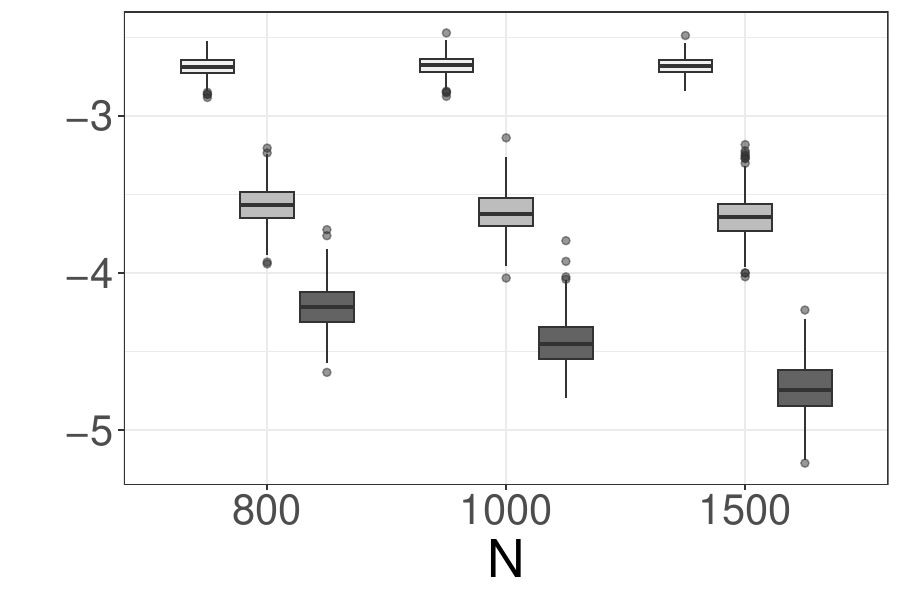}}
	\caption{The boxplots of the log-transformed MSE values for three different estimators $\wh\beta_{\text{lasso}}$,  $\wh\beta_{\text{tlasso}}$ and $\wh\beta_{\text{ptl}}$ in Example 1. The first row presents the sparse cases with $s = 40$ and the second row presents the dense cases when $ s = 300$. The first and second columns study the target sample size $n$ and the source sample size $N$, respectively.}
	\label{Figure source common support}
\end{figure}

\textbf{Example 1.} This is an example revised from \cite{li2020transfer} and \cite{tripuraneni2021provable}. Specifically, we generate $\bm\beta$ by the following two steps.
\begin{itemize}
	\item Generate $B$. Let $\Omega = (\omega_{ij})_{r_0\times K}$ be an $r_0\times K$ matrix with $\omega_{ij}$ independently generated from standard normal distribution. Let $U_K= (u_1,\dots,u_K)\in\mR^{r_0\times K}$ be the first $K$ left singular vector of $\Omega$ with $\|u_k\|=1$ for $1\le k\le K$. Then set $B = (2U_K^\top,  0.3\bm1_{s-r_0,K}^\top , \bm 0_{p-s,K}^\top)^\top \in \mR^{p\times K}$ \citep{tripuraneni2021provable}.
	\item Generate $\bm\delta$ and $\bm\beta$. Specifically, $\bm\delta = (\delta_1,\dots,\delta_p)^\top $ is generated as follows. Let $\mS$ be an index set with $|\mathcal{S}| = s_{\delta}$ randomly sampled from $\{s+1,\dots,p\}$ without replacement. Next, for each $j\in \mS$, we generate $\delta_j$ independently from $N(0,h/s_{\delta})$. For each $j\notin \mS$, set $\delta_j = 0$ \citep{li2020transfer}. The target parameter is then assembled by $\bm \beta = B\bm w + \bm \delta$ with $\bm w = (3/2,3/4,0,0,-5/4)^\top\in  \mR^5$. When $s_\delta$ is small, the difference between $\supp(\bm \beta)$ and $\supp(\bm\beta^{(k)})$ is only $s_{\delta}$ elements.
\end{itemize}

Set 
$\Sigma_k = \Sigma = I_p$ \citep{tripuraneni2021provable} and $\sigma_k^2 = \sigma^2 = 1$ for $1\le k\le K$. Fix the source sample size $n_k \equiv N$ to be the same for $1\le k\le K$. Consider different $n\in \{150,200,300\}$ and $N\in \{800,1000,1500\}$. 
Set $r_0 = \lfloor s/3\rfloor$ and $s_{\delta} = \lfloor s/5\rfloor$ with $s=40$ (sparse) and $s=300$ (dense), respectively. Here $\lfloor x\rfloor$ stands for the largest integer no greater than $x$. The experiment is then randomly replicated for a total of $B = 500$ times. This leads to a total of 500 MSE values, which are then log-transformed and boxplotted in Figure \ref{Figure source common support}. 

By Figure \ref{Figure source common support}, we find that the MSE values of both the Trans-LASSO and the PTL estimators are consistently smaller than those of the LASSO estimator. This is reasonable because LASSO uses only the target data, while Trans-LASSO and PTL utilize the information from both the target and the sources.   
Secondly, 
 MSE values of Trans-LASSO and PTL decrease as $N$ increases; see the right top and right bottom panels. This suggests that a larger source sample size also helps to improve the estimation accuracy of the target parameter significantly for the Trans-LASSO and PTL method. Lastly and also most importantly, for all the simulation cases, the MSE values of the PTL estimator are clearly smaller than those of the other estimators. This suggests that the PTL estimator is the best choice compared with two other competitors for this particular case.

\textbf{Example 2.} For this example, we allow the supports of the source and target parameters to be very different. Specifically, we generate $\bm\beta^{(k)}$'s and $\bm\beta$ by the following two steps.

\begin{itemize}
	\item Generate $B$. Specifically, we generate an $s\times K$ random matrix $\Omega$ in the same way as in Example 1. Denote its first $K$ left singular vectors by $U_K = (u_1,\dots, u_K) \in \mR^{s\times K}$ with $u_k = (u_{k1},\dots, u_{ks})^\top \in \mR^s$ and $\|u_k\| = 1$. The support of $\bm \beta^{(k)}$ is given by $\mS_c \cup \mS_k$, where $\mS_c$ denotes the common support set shared by all sources and $\mS_k$ stands for the support set uniquely owned by source $k$. Define $s_c=|\mS_c |$ the cardinality of $\mS_c$. Let $\mS_k = \{s_{k,1},\dots, s_{k,s-s_c}\}$ be a random subset of $\{s_c+1, \dots, p\}$ without replacement. Set $\beta_j^{(k)} = 2u_{kj}$ for $1\le j\le s_c$,  $\beta_{s_{k,j}}^{(k)} = 2u_{k,j+s_c}$ for $1\le j \le s - s_c$ and $\beta^{(k)}_j = 0$ if $j\notin \mS_c \cup  \mS_k$. Set $B = (\bm \beta^{(1)},\dots, \bm \beta^{(K)}) \in \mR^{p\times K}$.
	\item Generate $\bm\delta$ and $\bm\beta$. Let $\mS_0$ be a random subset of  $\{1,\dots, p\}$ without replacement with $|\mS_0| = s_{\delta}$. Let $\wt \delta \in \mR^{s_\delta}$ be the last eigenvector of $B_{\mS_0} B_{\mS_0}^\top$, where $B_{\mS_0} = (\beta^{(1)}_{\mS_0}, \dots, \beta^{(K)}_{\mS_0} )$ and $\beta^{(k)}_{\mS_0} = (\beta_j:j\in\mS_0)^\top \in\mR^{s_{\delta}}$. Fix $\bm \delta$ to be $\delta_{\mS_0} = h \wt\delta/ |\wt\delta|$ and $\delta_{\mS_0^c} = 0$. Therefore, the identification condition about delta (i.e. $B^\top \Sigma \bm \delta = 0$) can be naturally satisfied. The target parameter is then assembled as $\bm \beta = B\bm w +\bm \delta$ with $w = (3/2,3/4,0,0,-5/4)^\top\in  \mR^5$.
\end{itemize}

The specification of $\Sigma,\Sigma_k, \sigma^2$ and $\sigma_k^2$ are the same as in Example 1. The experiment is then randomly replicated for a total of $B = 500$ times. This leads to a total of 500 MSE values, which are then log-transformed and boxplotted in Figure \ref{Figure source different support}. 

\begin{figure}[t]
	\centering
	\subfloat[$n_s=800$ and $s=40$ ]{\includegraphics[width=0.5\textwidth]{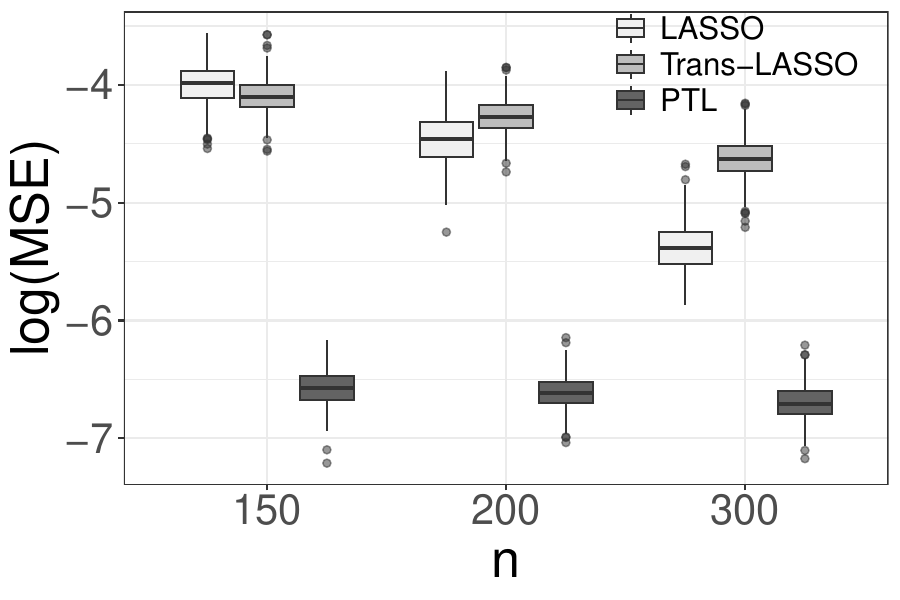}}
	\subfloat[$n=200$ and $s=40$ ]{\includegraphics[width=0.5\textwidth]{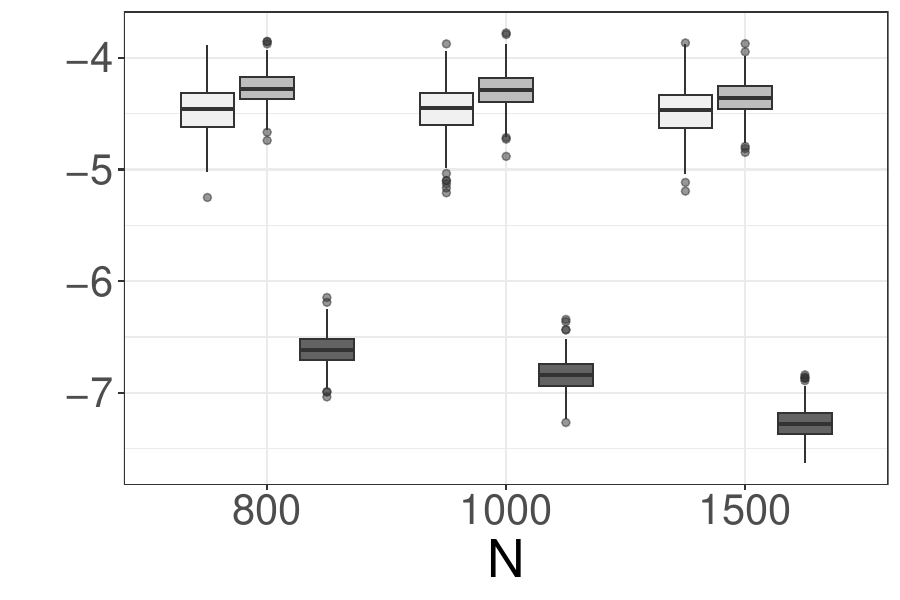}}\\
	\subfloat[$n_s=800$ and $s=300$ ]{\includegraphics[width=0.5\textwidth]{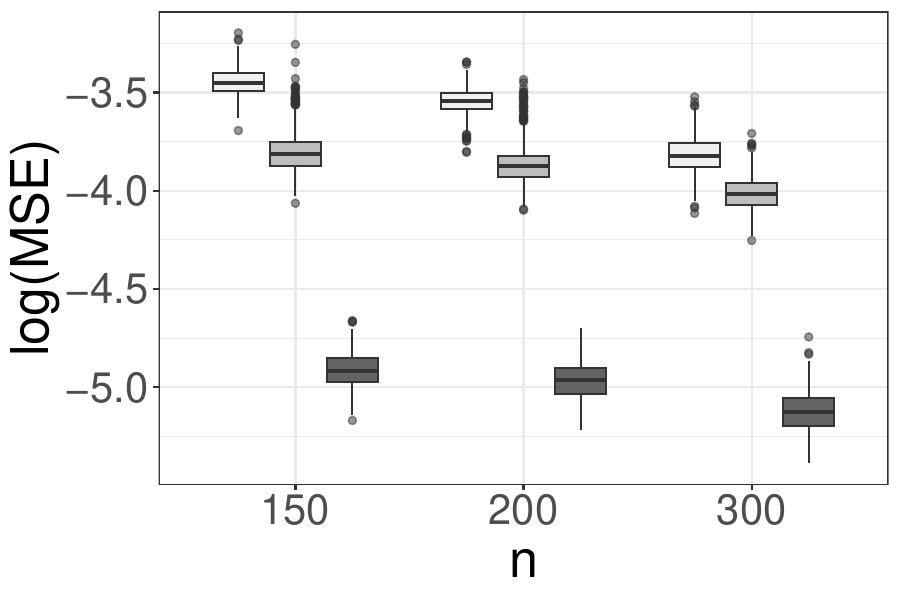}}
	\subfloat[$n=200$ and $s=300$ ]{\includegraphics[width=0.5\textwidth]{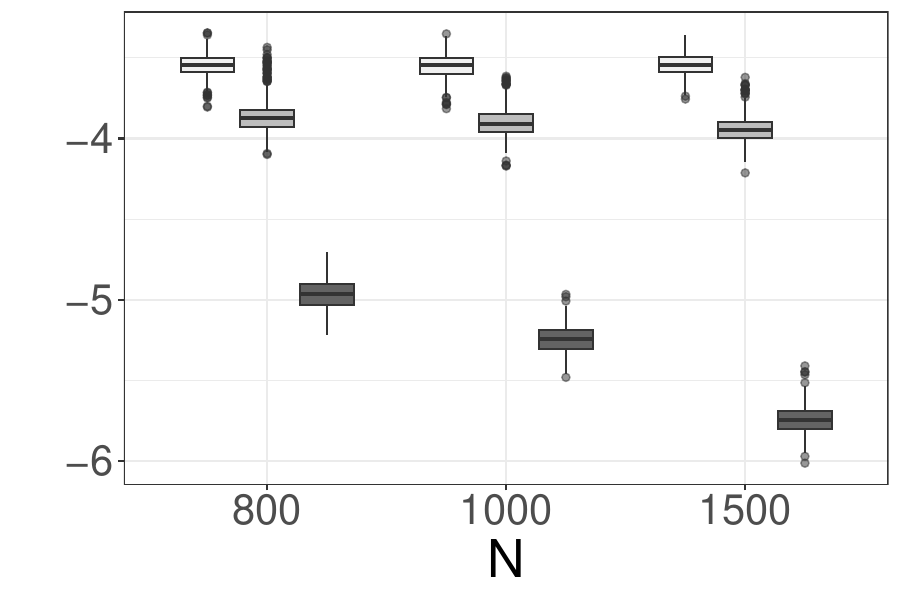}}
	\caption{The boxplots of the log-transformed MSE values for four different estimators $\wh\beta_{\text{lasso}}$, $\wh\beta_{\text{tlasso}}$ and $\wh\beta_{\text{ptl}}$ in Example 2. The first row presents the sparse cases with $s = 40$ and the second row presents the dense cases when $ s = 300$. The first and second columns study the target sample size $n$ and the source sample size $N$, respectively.}
	\label{Figure source different support}
\end{figure}

Most numerical findings obtained by Figure \ref{Figure source different support} are qualitatively similar to those of Figure \ref{Figure source common support}. The key differences are given by the right two panels, where as the source data sample size $N$ increases, the decreases of $\log(\mbox {MSE})$ for the Trans-LASSO estimator is small. This suggests that the transfer learning capability of the Trans-LASSO estimator is very limited for this particular example. 
The success of the Trans-LASSO method heavily hinges on the  \textit{vanishing-difference} assumption, which is unfortunately violated seriously in this example. Similar patterns are also observed for the LASSO estimator. 
In contrast, the $\log(\mbox {MSE})$ values for PTL estimator steadily decrease as $n_s$ increases. The PTL method once again stands out clearly as the best method for this particular case.

\begin{figure}[t]
	\centering
	\subfloat[$N=1500$ and $s=30$ ]{\includegraphics[width=0.5\textwidth]{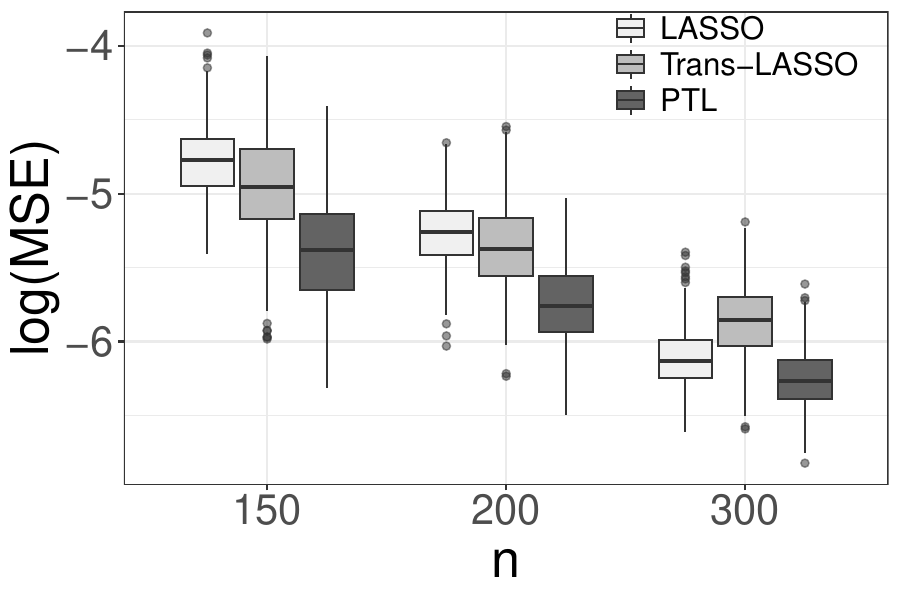}}
	\subfloat[$n=200$ and $s=30$ ]{\includegraphics[width=0.5\textwidth]{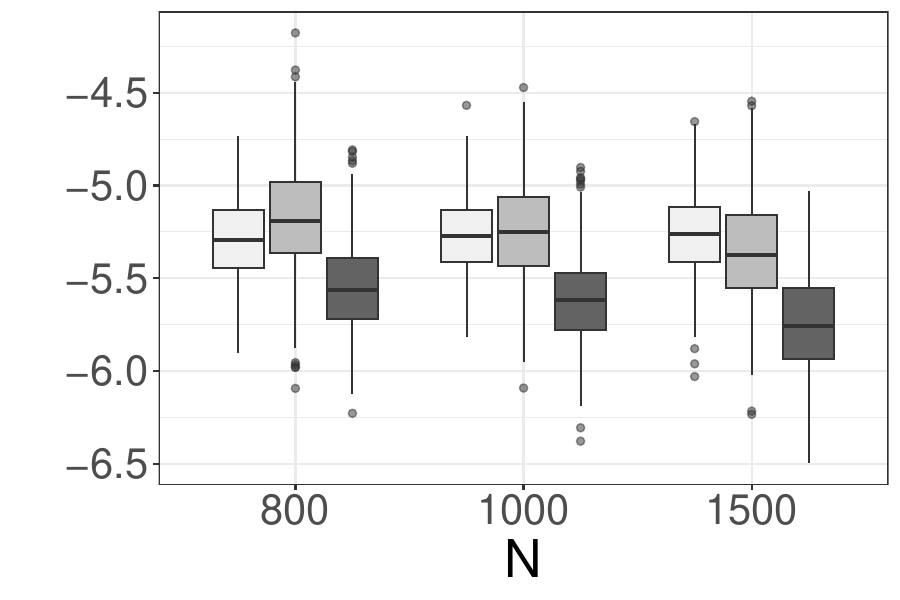}}\\
	\subfloat[$N = 1500$ and $s=300$ ]{\includegraphics[width=0.5\textwidth]{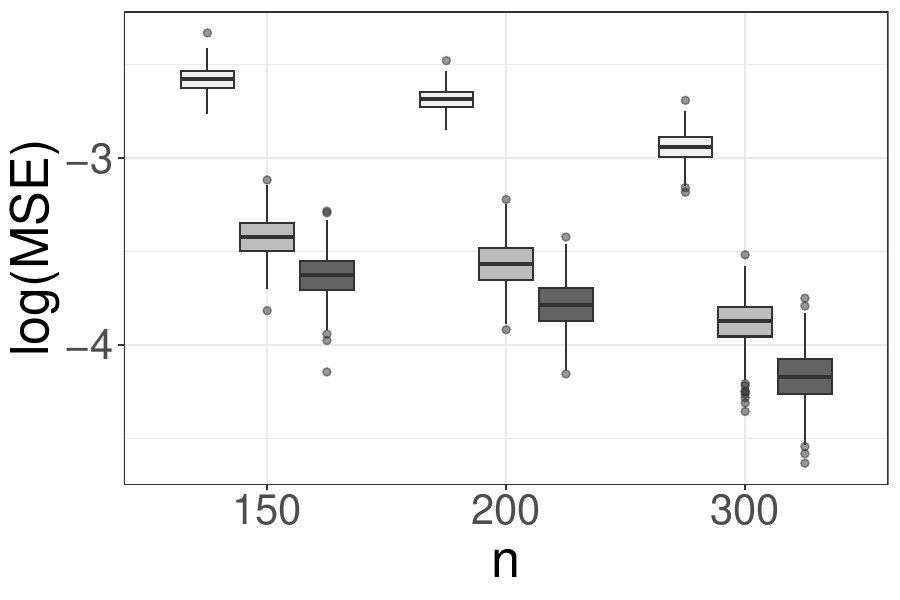}}
	\subfloat[$n=200$ and $s=300$ ]{\includegraphics[width=0.5\textwidth]{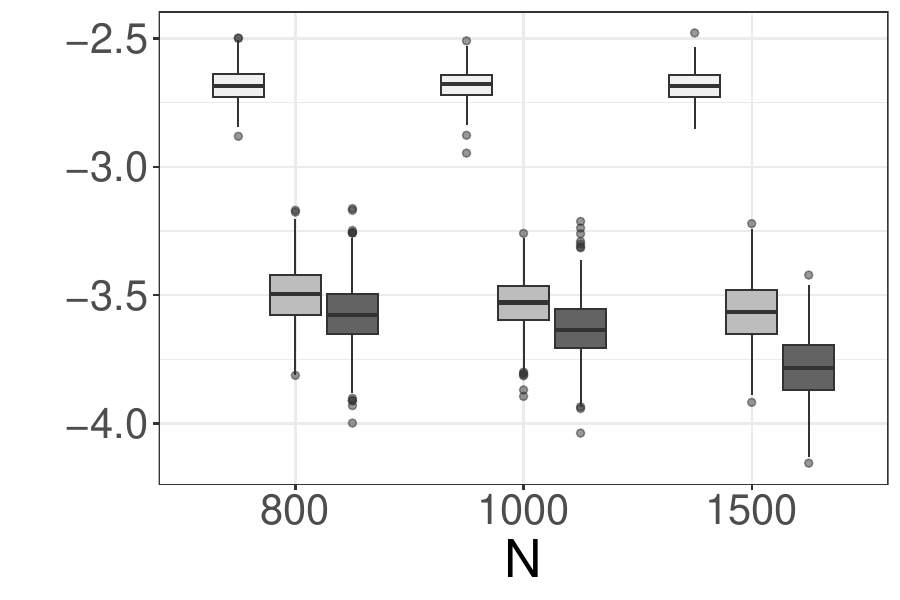}}
	\caption{The boxplots of the log-transformed MSE values for four different estimators $\wh\beta_{\text{lasso}}$, $\wh\beta_{\text{tlasso}}$ and $\wh\beta_{\text{ptl}}$ in Example 3. The first row presents the sparse cases with $s = 40$ and the second row presents the dense cases when $ s = 300$. The first and second columns study the target sample size $n$ and the source sample size $N$, respectively.}
	\label{Figure source common support heterogenous covariance}
\end{figure}

\textbf{Example 3.} For this example, the parameter specifications here are the same as those of Example 1. The key difference is that the covariance matrices of covariates are allowed to be different for the target and source datasets. Moreover, the error variances are also allowed to be different for different datasets. Specifically, we generate $\Sigma_k$, $\Sigma$, $\sigma_k^2$ and $\sigma^2$ as follows.

\begin{itemize}
	\item Generate $\Sigma_k$ and $\Sigma$. For source $k$, set $\Sigma_k$  to be a symmetric Toeplitz matrix with first row given by $(1,\bm 1_{2k-1}^\top / (k+1), \bm 0_{p-2k}^\top )$, where $\bm 1_{2k-1} = (1,\dots,1)^\top \in \mR^{2k-1}$ for $1\le k\le K$; see Section 8.3.4 of \cite{seber2008matrix} for more details. For the target dataset, set the covariance matrix of the covariates as $\Sigma=I_p$ \citep{li2020transfer}.
	\item Generate $\sigma_k^2$ and $\sigma^2$. Set the error variance of the $k$th source as $\sigma_k^2 = k$. Fix $\sigma^2 = 1$ for the target data.
\end{itemize}
The experiment is then randomly replicated for a total of $B = 500$ times. This leads to a total of 500 MSE values, which are then log-transformed and boxplotted in Figure \ref{Figure source common support heterogenous covariance}. The numerical findings obtained in Figure \ref{Figure source common support heterogenous covariance} are similar to those of Figure \ref{Figure source common support}. The proposed PTL estimator remains to be the best one.

\begin{figure}[t]
	\centering
	\subfloat[$N=1500$ and $s=30$ ]{\includegraphics[width=0.5\textwidth]{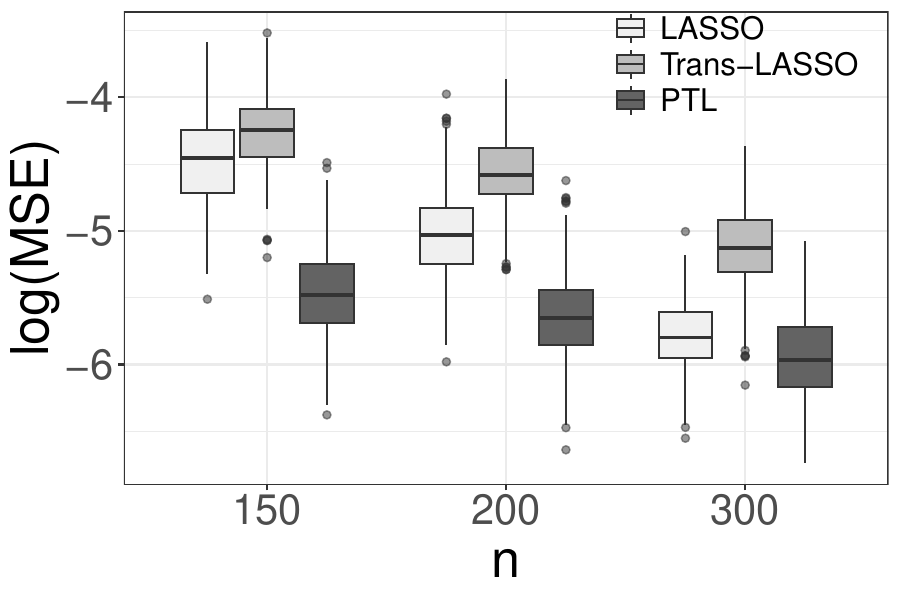}}
	\subfloat[$n=200$ and $s=30$ ]{\includegraphics[width=0.5\textwidth]{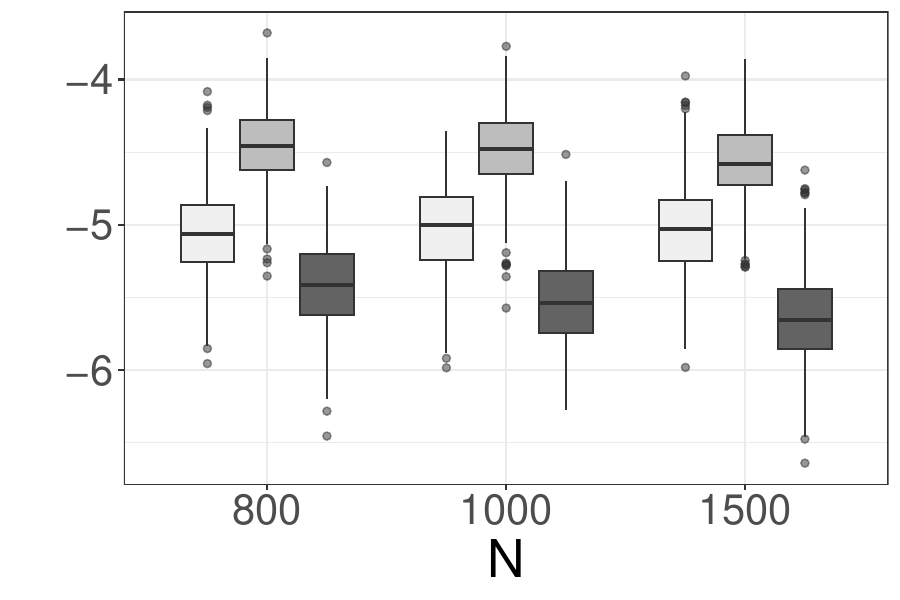}}\\
	\subfloat[$N = 1500$ and $s=300$ ]{\includegraphics[width=0.5\textwidth]{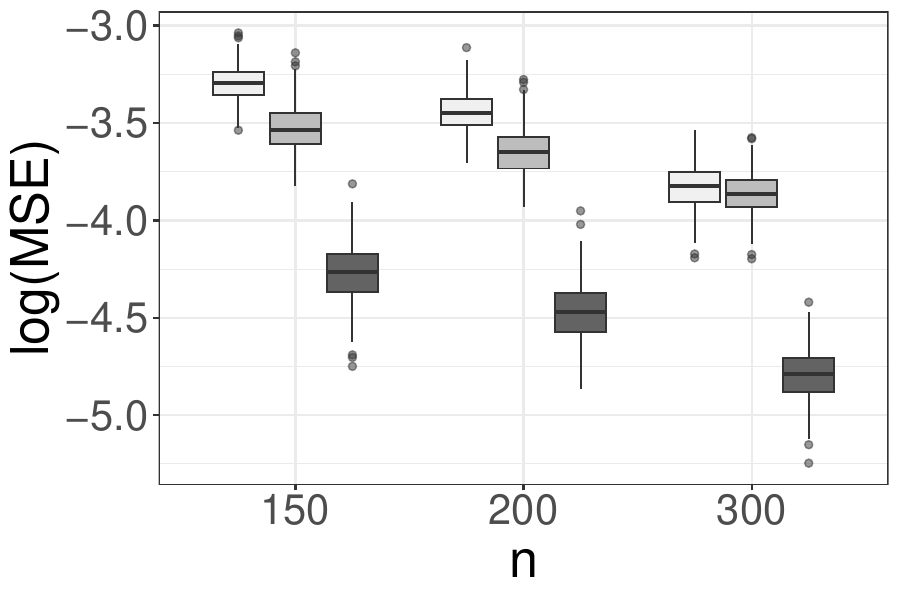}}
	\subfloat[$n=200$ and $s=300$ ]{\includegraphics[width=0.5\textwidth]{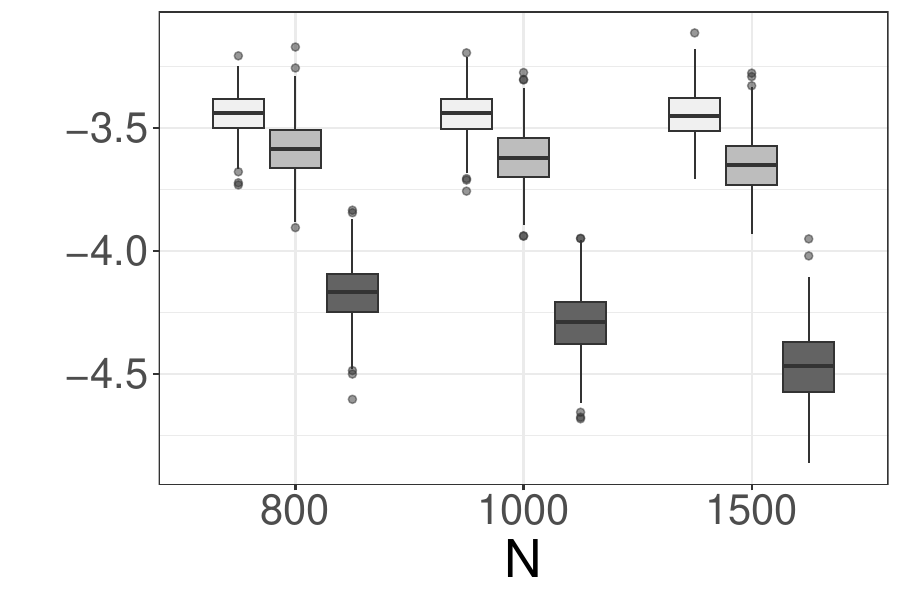}}
	\caption{The boxplots of the log-transformed MSE values for four different estimators $\wh\beta_{\text{lasso}}$, $\wh\beta_{\text{tlasso}}$ and $\wh\beta_{\text{ptl}}$ in Example 4. The first row presents the sparse cases with $s = 40$ and the second row presents the dense cases when $ s = 300$. The first and second columns study the target sample size $n$ and the source sample size $N$, respectively.}
	\label{Figure delta not sparse}
\end{figure}

\textbf{Example 4} In this example, we consider the case the parameter generation is different from the identification condition. Specifically, the parameter $\bm \beta^{(k)}$'s and $\bm\beta$ are generated as follows.

\begin{itemize}
	\item Generate $B$. The way for generating $B$ is the same as Example 1. Generate $\bm\delta$ and $\bm\beta$ as follows. The generation of $\bm\delta^*$ is similar to that of $\bm\delta$ in Example 1 expect for $\mS$ is sampled from $\{1,\dots,p\}$. Set $\bm\beta = B\bm w^* + \bm\delta^*$ with $\bm w^* = (3/2,3/4,0,0,-5/4)^\top\in  \mR^5$.
	\item Generate $\Sigma_k$, $\Sigma$, $\sigma_k^2$ and $\sigma^2$. Set $\Sigma_k$'s the same as those in Example 3. Set $\Sigma = (\sigma_{ij})_{p\times p}$ with $\sigma_{ij} = 0.5^{|i-j|}$. In this scenario, the identification condition $B^\top \Sigma \bm \delta^*=\mathbf{0}$ does not hold.  Set $\sigma_k^2 = 1.25^{k-3}$ and $\sigma^2 = 1$.
\end{itemize}
The experiment is then randomly replicated for a total of $B = 500$ times. This leads to a total of 500 MSE values, which are then log-transformed and boxplotted in Figure \ref{Figure delta not sparse}. We find that although the identification condition is violated, the proposed PTL estimator remains to be the best estimator.

\csubsection{Real Data Analysis}

We study here the problem of sentence prediction for fraud crimes occurred in China rendered in the year from 2018 to 2019. Fraud is one of the most traditional types of property crime, characterized by a high proportion and a wide variety. For example, from 2018 to 2019, fraud accounted for approximately 1\% of all recorded criminal activities. In Chinese criminal law, besides the basic fraud crime, offenses related to fraud crime include  contract fraud, insurance fraud, credit card fraud, and many other types. As the societal harm caused by different categories of fraudulent behavior varies, the sentencing guidelines also differ. Consequently, it becomes particularly important to make scientifically reasonable sentencing judgments for a given case of fraud, serving as a crucial guarantee for judicial fairness and justice. However, given the complexity and variety of different fraud cases, judges need to make their final judicial rulings based on the facts of the crime, relevant legal statutes, and their necessary personal judgment. The involvement of personal subjective judgment often leads to considerable uncertainty in judicial decisions, indirectly impacting the fairness and justice of judicial practices. Therefore, it is crucial to construct an objective, precise, and automated sentencing model to assist judges in making more scientifically stable and fair sentencing decisions.

\begin{figure}[t]
	\centering
	\includegraphics[width = 0.95\textwidth]{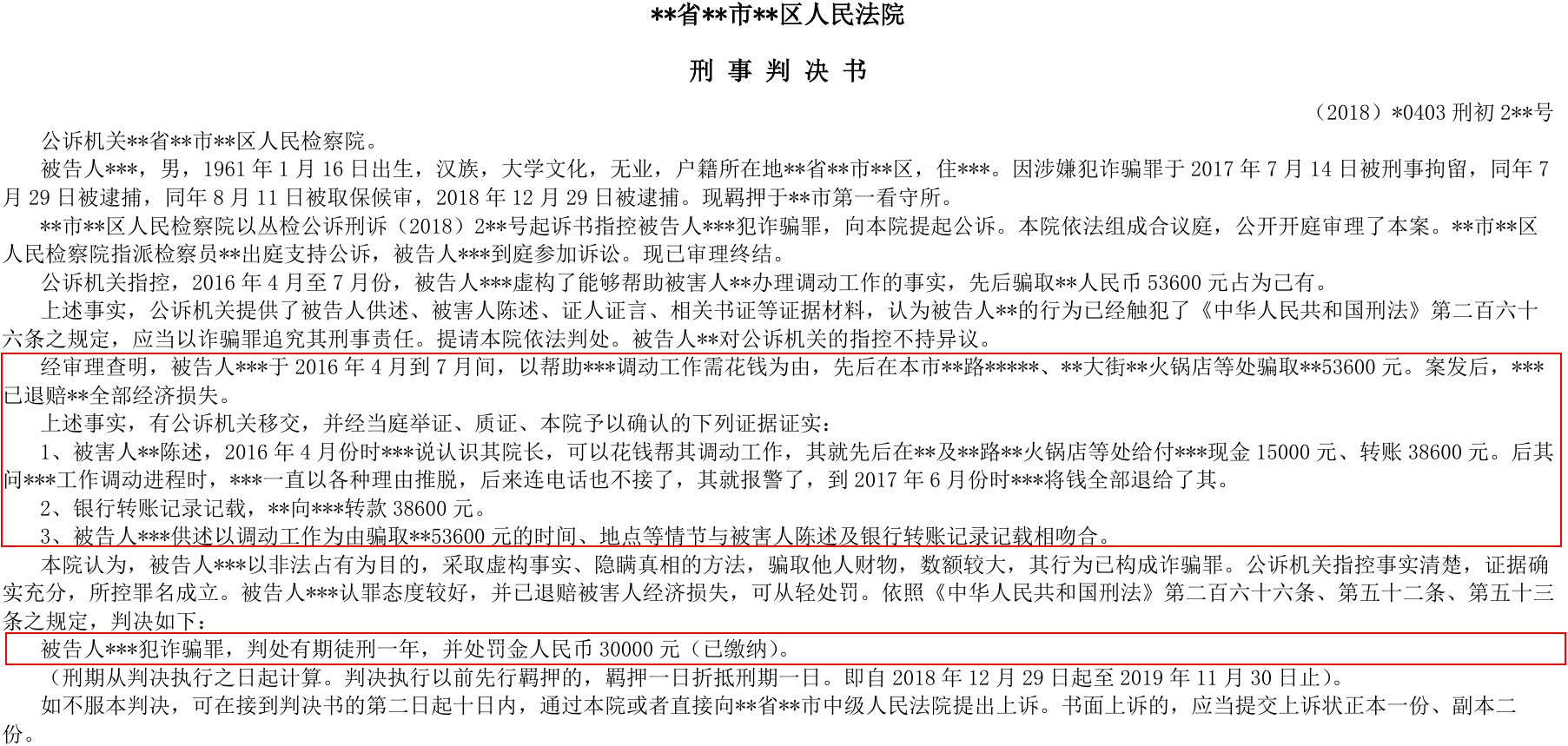}
	\caption{An illustration for judgment document. The middle box highlights the major facts. The bottom box highlights the sentencing decisions. }
	\label{Figure document example}
\end{figure}

To this end, we consider training different linear regression models for different types of fraud. Specifically, our data are collected from China Judgment Online (CJO, \url{https://wenshu.court.gov.cn/}) during the period from 2018 to 2019. For simplicity, only the first trials and fixed-term imprisonment cases are considered. Here, we use the log-transformed length of sentence as the dependent variable, which is extracted from the document; see the bottom box in Figure \ref{Figure document example} for illustration. Subsequently, we convert the text in the sentencing documents related to the criminal facts (see middle box in Figure \ref{Figure document example}) into high-dimensional 1536-dimensional vectors using the OpenAI text-embedding-ada-02 model (\url{https://openai.com/blog/new-and-improved-embedding-model}). After that, different high-dimensional linear regression models can be established for various types of fraud. It is worth noting that the proportions of different types of fraud in the entire sample are very different from each other. A few of the most common types of fraud occupy a large proportion of the whole sample. For instance, the crime of fraud accounts for 71.8\% of the total fraud cases and crime of contract fraud accounts for 9.7\%; see Table \ref{Table crimes} for details. Therefore, the sample sizes for these few types of frauds are relatively sufficient for accurate estimation. Unfortunately, there are a large number of other types of frauds with tiny sample sizes. Consider for example the crime of negotiable instrument fraud, the corresponding sample size is only 112. This tiny sample size is obviously insufficient to support precise parameter estimation. 

\begin{table}[!t]
	\centering
	\begin{threeparttable}[b]
			\caption{Different Types of Fraud Crimes}
			\begin{tabular}{ccc}
			\hline
			\hline
			Crime Type & \quad Sample Size $n$ \quad & \quad Percentage (\%)\quad  \\
			\hline
			Crime of Fraud & 37,121 & 71.80 \\
			Crime of Contract Fraud & 5,014 & 9.70 \\
			\quad\quad  Crime of Making out False Invoices\tnote{1}\quad \quad & 3,827 & 7.40 \\
			Crime of Credit Card Fraud & 3,816 & 7.38\\
			
			Crime of Cheating and Bluffing & 586 & 1.13\\
			Crime of Fund-Raising Fraud & 537& 1.04\\ 
			
			Crime of Insurance Fraud & 373 &0.72\\
			Crime of Loan Fraud & 192 & 0.37\\ 
	Crime of Defrauding Loans\tnote{2}& 126 & 0.24\\
			\multicolumn{1}{c}{\begin{tabular}[c]{@{}c@{}}Crime of Negotiable Instrument Fraud\end{tabular}} & 112&0.22 \\
			
			\hline 
		\end{tabular}
		\begin{tablenotes}
			\item[1] The full name is Crime of Making out False Value-Added Tax Invoices or Other False Invoices to Obtain an Export Tax Refund or to Offset Payable Taxes.
			\item[2] The full name is Crime of Defrauding Loans, Acceptance of Negotiable Instruments  and Financial Document or Instrument. 
		\end{tablenotes}
			\label{Table crimes}
	\end{threeparttable}

\end{table}

To overcome this challenge, we consider here the method of transfer learning. Specifically, we treat fraud categories with sample sizes exceeding 1000 as the source datasets, while the other small categories as target datasets. This leads to a total of 4 source datasets, corresponding to the crime of fraud, crime of contract fraud, crime of making out false invoices, and crime of credit card fraud. The corresponding sample sizes are given by 37,121, 5,014, 3,827 and 3,816, respectively. Then, we attempt to employ the PTL estimator proposed in this paper to borrow the information from the source datasets to help the target models. 
To evaluate the prediction performance of the proposed PTL method, we apply the two-fold cross validation method. For the training dataset, we estimate the regression coefficients by the PTL method. For the validation dataset, we evaluate the prediction performance by out-of-sample $R$-squared, defined as $R^2_{\text{val}} = 1 - \sum_{i\in \mS_{\text{val}}}(\wh Y_i - Y_i)^2 / \sum_{i\in \mS_{\text{val}}}(Y_i - \bar Y_{\text{val}})^2$. Here $\mS_{\text{val}}$ is the index of validation dataset and $ \bar Y_{\text{val}} $ is the sample mean of the validation response. This procedure is replicated for a total of 100 times, resulting in a total of 200 out-of-sample $R$-squared values. For comparison purpose, the LASSO and Trans-LASSO estimators are also tested. The average $R$-squared values are reported in Table \ref{Table crime R square}. By Table \ref{Table crime R square}, we find that the PTL estimators outperform the other two estimators for all target crimes. 

\begin{table}[!t]
	\centering
	\begin{threeparttable}[b]
			\caption{Average out-sample $R$-squared (\%) for different methods. }
		\begin{tabular}{ccccc}
			\hline
			\hline
			Crime Type&$n$ & LASSO & Trans-LASSO &  PTL \\
			\hline
			Crime of Cheating and Bluffing& 586&12.41& 16.85&25.90 \\ 
			Crime of Fund-Raising Fraud & 537& 2.16 & 4.83 & 12.90\\
			Crime of Insurance Fraud & 373& 3.78 &  12.05 & 14.64 \\
			Crime of Loan Fraud & 192&-0.34&  10.95&18.60\\ 
Crime of Defrauding Loans\tnote{1}& 126 &-5.19& 0.56& 1.78 \\
		Crime of Negotiable Instrument Fraud & 112& 3.94&27.92&30.08\\
			\hline 
		\end{tabular}
		\begin{tablenotes}
			\item[1] The full name of the crime is Crime of Defrauding Loans, Acceptance of Negotiable Instruments  and Financial Document or Instrument. 
		\end{tablenotes}
		\label{Table crime R square}
	\end{threeparttable}
\end{table}

\csection{CONCLUDING REMARKS}

In this article, we present here a profiled transfer learning method. The non-asymptotic upper bound and minimax lower bound are established. The proposed estimator is minimax optimal under appropriate conditions. Numerical studies and a real data example about sentence prediction are presented to demonstrate the finite sample performance of the proposed method. To conclude this article, we consider several interesting topics for future research. First, we use all the source datasets to construct the profiled response. However, the source datasets could be useless if its regression pattern is extremely different from the target one. Therefore, there is a practical need to select the useful source datasets by appropriate method. This seems the first interesting topic for future study. Second, we studied here linear regression model only. Then how to generalize the method of profiled transfer learning to other statistical models (e.g. a logistic regression model) is another interesting topic for future study. Lastly, high dimensional covariance estimation is another important statistical problem suffering from the curse of dimensionality. Then how to conduct profiled transfer learning for high dimensional covariance estimator is also an important topic worthwhile pursuing.  

\clearpage

\renewcommand\refname{\begin{center}
		\large{REFERENCES}
\end{center}}
\bibliographystyle{asa}
\bibliography{reference}
\newpage
	\begin{center}
	{\bf\Large Supplementary Materials to
		``Profiled Transfer Learning for High Dimensional Linear Model''}\\
	\bigskip

\end{center}


\appendix
\renewcommand{\csection}[1]
{\begin{center}
		\stepcounter{section}
		{\bf\large Appendix \Alph{section}. #1}
	\end{center}
}

\csection{Proof of Proposition \ref{Prop identification}}

Recall that $\bm\beta = B\bm w +\bm\delta$, we can rewrite the model as $Y_i = X_i^\top B \bm w + X_i^\top \bm\delta + \ve_i$. Multiply $B^\top X_i$ on both sides and take expectation, we have $B^\top E(X_i Y_i) = B^\top \Sigma B \bm w$. Note that $B^\top \Sigma B$ is nonsingular since the $\bm \beta^{(k)}$'s are linearly independent and $\Sigma$ is positive definite. As a result, the vector $\bm w$ be can identified as $\bm w = (B^\top \Sigma B)^{-1} B^\top E(X_i Y_i) $.

Next we consider the identification of $\bm \delta$. Multiply $X_i$ on both sides of $Y_i = X_i^\top B \bm w + X_i^\top \bm\delta + \ve_i$ and take expectation, we have $E(X_iY_i) = \Sigma B \bm w+ \Sigma\bm  \delta$. Then the vector $\bm\delta$ can be identified as $\bm \delta = \Sigma^{-1}\{E(X_i Y_i) - \Sigma B\bm w\}$. The proof is completed.

\csection{Some Useful Lemmas of Theorem \ref{Theorem convergence rate}}

To prove the theorem condition, we first introduce several useful lemmas.
\begin{lemma}
	(Generalized Hanson-Wright Inequality) Assume that $X \in \mR^p$ is a mean zero sub-Gaussian random vector with $\|X\|_{\psi_2} \le C_{\psi_2}$. Let $B \in \mR^{p\times K}$ be an arbitrary constant matrix, then we have $E\exp(\lambda^2 \|B^\top X\|^2 )\le \exp(C_1C_{\psi_2}^2\lambda^2\|B\|^2_F)$ for all $|\lambda|\le C_2/(C_{\psi_2}\|B\|_{\text{op}})$, where $C_1>0$ and $C_2>0$ are some fixed positive constants.  \label{Lemma hanson wright}
\end{lemma}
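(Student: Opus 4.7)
The plan is to reduce the bound on $E\exp(\lambda^2 \|B^\top X\|^2)$ to a Gaussian MGF computation via the standard \emph{Gaussian squarization} identity, apply the sub-Gaussian MGF bound for linear functionals of $X$ pointwise, and then evaluate the remaining Gaussian integral in closed form.

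First, I would use the identity $\exp(\|v\|^2/2)=E_g\exp(g^\top v)$ for $g\sim N(0,I_K)$, which implies
\[
\exp\bigl(\lambda^2\|B^\top X\|^2\bigr)=E_g\exp\bigl(\sqrt{2}\,\lambda\, g^\top B^\top X\bigr).
\]
Taking expectation over $X$ and applying Fubini,
\[
E\exp\bigl(\lambda^2\|B^\top X\|^2\bigr)=E_g\,E_X\exp\bigl(\sqrt{2}\,\lambda\,(Bg)^\top X\bigr).
\]
For the inner expectation, the sub-Gaussian assumption $\|X\|_{\psi_2}\le C_{\psi_2}$ (together with $E X=0$) yields $E\exp(\langle v,X\rangle)\le\exp(c\, C_{\psi_2}^2\|v\|^2)$ for every deterministic $v\in\mathbb{R}^p$ and some absolute $c>0$. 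Applying this with $v=\sqrt{2}\lambda Bg$ bounds the inner integral by $\exp(2c\,\lambda^2 C_{\psi_2}^2\|Bg\|^2)$.

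Next, set $\mu=2c\,\lambda^2 C_{\psi_2}^2$ so that the remaining quantity is $E_g\exp(\mu\, g^\top B^\top B g)$. This is a standard Gaussian moment generating function of a quadratic form: writing $\sigma_1^2,\ldots,\sigma_K^2$ for the eigenvalues of $B^\top B$ (equivalently, squared singular values of $B$),
\[
E_g\exp\bigl(\mu\, g^\top B^\top B g\bigr)=\prod_{i=1}^{K}(1-2\mu\sigma_i^2)^{-1/2},
\]
valid provided $2\mu\sigma_i^2<1$ for all $i$, i.e.\ $\mu\|B\|_{\mathrm{op}}^2<1/2$. Restricting further to $2\mu\|B\|_{\mathrm{op}}^2\le 1/2$ and invoking the elementary inequality $-\log(1-x)\le 2x$ for $x\in[0,1/2]$ gives
\[
\prod_{i=1}^{K}(1-2\mu\sigma_i^2)^{-1/2}\le\exp\Bigl(2\mu\sum_{i=1}^{K}\sigma_i^2\Bigr)=\exp\bigl(2\mu\|B\|_F^2\bigr)=\exp\bigl(4c\,\lambda^2 C_{\psi_2}^2\|B\|_F^2\bigr).
\]
Translating the restriction $2\mu\|B\|_{\mathrm{op}}^2\le 1/2$ back into a bound on $\lambda$ produces $|\lambda|\le C_2/(C_{\psi_2}\|B\|_{\mathrm{op}})$ with $C_2=1/\sqrt{8c}$, and we can set $C_1=4c$.

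The only technical delicacy I anticipate is keeping track of the absolute constant $c$ (which comes from the equivalence of sub-Gaussian norm and MGF bound) and verifying that the Fubini swap and the closed-form Gaussian integral are justified uniformly in $\lambda$ within the stated range; the domain constraint $|\lambda|\le C_2/(C_{\psi_2}\|B\|_{\mathrm{op}})$ arises precisely from keeping the matrix $I-2\mu B^\top B$ positive definite, so the Gaussian integral stays finite. No decoupling or chaining machinery is needed because squarization linearizes the dependence on $X$ before any concentration tool is invoked.
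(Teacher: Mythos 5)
Your proof is correct, and it takes a genuinely more self-contained route than the paper's. The paper proves this lemma by padding $B$ to a square matrix and invoking Remark 2.3 of Hsu, Kakade and Zhang (2012) as a black box, which yields a bound containing an extra $\lambda^4$ correction term of the form $C_3^2C_{\psi_2}^4\lambda^4\|B\|_{\text{op}}^2\|B\|_F^2/(1-2C_3C_{\psi_2}^2\|B\|_{\text{op}}^2\lambda^2)$; the authors then shrink the admissible range of $\lambda$ so that this term is dominated by the leading $\lambda^2\|B\|_F^2$ term. You instead rederive the estimate from first principles via Gaussian linearization ($\exp(\lambda^2\|B^\top X\|^2)=E_g\exp(\sqrt{2}\lambda g^\top B^\top X)$), Tonelli (harmless since the integrand is nonnegative), the pointwise sub-Gaussian MGF bound, and the exact Gaussian MGF of a quadratic form, finishing with $-\log(1-x)\le 2x$ on $[0,1/2]$; this is essentially the proof idea underlying the Hsu--Kakade--Zhang result itself, carried out directly. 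All steps check out: the constraint $2\mu\|B\|_{\text{op}}^2\le 1/2$ with $\mu=2c\lambda^2C_{\psi_2}^2$ correctly produces $|\lambda|\le C_2/(C_{\psi_2}\|B\|_{\text{op}})$ with $C_2=1/\sqrt{8c}$, and $\sum_i\sigma_i^2=\|B\|_F^2$ gives $C_1=4c$. What your approach buys is a cleaner, citation-free argument with no $\lambda^4$ remainder to absorb; what the paper's buys is brevity by outsourcing the Gaussian computation to a published reference.
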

\begin{proof}
	This lemma comes from Exercise 6.2.6 in \cite{vershynin2018high}. For the sake of completeness, we provide here a brief proof for the lemma. The proof is based on the results of \cite{hsu2012tail}. Define $A = (B, \mathbf 0_{p\times(p-K)}) \in \mR^{p\times p}$. Then $\|A^\top X\|^2 = X^\top AA^\top X = X^\top B B^\top X = \|B^\top X\|^2$, $\|A\|_{\text{op}} = \|B\|_{\text{op}}$ and $\|A\|_F = \|B\|_F$. As a result, the inequalities in \cite{hsu2012tail} holds for matrix $B\in\mR^{p\times K}$. Then by condition (C2), we know that $E\exp(u^\top X_i )\le \exp(C_3C_{\psi_2}^2\|u\|^2/2)$ for some constant $C_3>0$. Next by Remark (2.3) of \cite{hsu2012tail}, for $|\lambda| < 1/(\sqrt {2C_3} C_{\psi_2}\|B\|_{\text{op}})$, we have \[
	E\exp(\lambda^2 \|B^\top X\|^2) \le \exp\bigg\{C_3 C_{\psi_2}^2 \|B\|_F^2 \lambda^2 + \frac{C_3^2 C_{\psi_2}^4 \lambda^4\|B\|_{\text{op}}^2\|B\|_F^2}{ 1-2C_3C_{\psi_2}^2\|B\|_{\text{op}}^2\lambda^2}\bigg\}.
	\]
	When $|\lambda|\le 1/(\sqrt {3C_3} C_{\psi_2}\|B\|_{\text{op}})$, we then have $C_3^2C_{\psi_2}^4\|B\|_{\text{op}}^2\lambda^4 / (1-2C_3C_{\psi_2}^2\|B\|_{\text{op}}^2\lambda^2) \le C_3 C_{\psi_2}^2\lambda^2$. As a result, we have $E\exp(\lambda^2 \|BX\|^2) \le \exp(2C_3 C_{\psi_2}^2\lambda^2 \|B\|_F^2)$. Take $C_1 = 2C_3$ and $C_2 = 1/\sqrt{3C_3}$, then the lemma proof is complete.\\
\end{proof}

\begin{lemma}
	(Convergence Rate of Sample Covariance Matrix) Let $Z_1,Z_2,\dots\in\mR^K$ be a sequence of independent and identically distributed zero-mean sub-Gaussian random vectors. Suppose there exists some fixed constant $M\ge 1$ such that $\|u^\top Z_i\|_{\psi_2} \le ME(u^\top Z_i)^2$ for all $u\in\mR^p$. Define $\wh \Sigma_Z = n^{-1}\sum_{i=1}^n Z_iZ_i^\top$ and $\Sigma_Z = E(Z_iZ_i^\top)$. Then for every positive integer $n$, we have \[
	\|\wh \Sigma_Z - \Sigma_Z\|_{\text{op}}\le CM^2\Bigg(\sqrt{\frac{K + t}{n}} + \frac{K+t}{n}\Bigg)\|\Sigma_Z\|_{\text{op}}
	\]
	with probability at least $1-2\exp(-t)$ for some constant $C>0$. \label{Lemma sample cov}
\end{lemma}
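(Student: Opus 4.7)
The plan is to reduce the operator norm to a supremum of quadratic forms over the unit sphere $S^{K-1}\subset \mR^K$, discretize by an $\varepsilon$-net, apply a sub-exponential concentration inequality in each fixed direction, and close with a union bound over the net. This is the standard covering-plus-Bernstein scheme (cf.\ Theorem~4.7.1 of \cite{vershynin2018high}); the work is largely bookkeeping of constants under the hypothesis $\|u^\top Z_i\|_{\psi_2}^2 \le M^2 E(u^\top Z_i)^2$, which in particular yields $\|u^\top Z_i\|_{\psi_2}^2\le M^2\|\Sigma_Z\|_{\text{op}}$ for every unit $u$.

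First, since $\wh\Sigma_Z - \Sigma_Z$ is symmetric, I would write $\|\wh\Sigma_Z-\Sigma_Z\|_{\text{op}} = \sup_{u\in S^{K-1}}|u^\top(\wh\Sigma_Z-\Sigma_Z)u|$ and fix a $1/4$-net $\mathcal{N}\subset S^{K-1}$ with $|\mathcal{N}|\le 9^K$ by the usual volume argument. A routine approximation step gives
\[
\|\wh\Sigma_Z-\Sigma_Z\|_{\text{op}}\le 2\sup_{u\in\mathcal{N}}\bigl|u^\top(\wh\Sigma_Z-\Sigma_Z)u\bigr|.
\]
For each fixed $u\in S^{K-1}$ I would decompose the quadratic form as a centered i.i.d.\ average,
\[
u^\top(\wh\Sigma_Z-\Sigma_Z)u = \frac{1}{n}\sum_{i=1}^n\bigl\{(u^\top Z_i)^2 - E(u^\top Z_i)^2\bigr\}.
\]
The hypothesis together with the identity $\|(u^\top Z_i)^2\|_{\psi_1}\le 2\|u^\top Z_i\|_{\psi_2}^2$ makes each summand sub-exponential with $\psi_1$-norm bounded by a constant multiple of $\sigma := M^2\|\Sigma_Z\|_{\text{op}}$. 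Bernstein's inequality then delivers, for some absolute $c>0$,
\[
P\Bigl(\bigl|u^\top(\wh\Sigma_Z-\Sigma_Z)u\bigr|\ge \tau\Bigr)\le 2\exp\Bigl\{-cn\min\bigl(\tau^2/\sigma^2,\ \tau/\sigma\bigr)\Bigr\}.
\]

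Finally, I would take a union bound over $\mathcal{N}$, which inflates the tail by $9^K = e^{K\log 9}$, and set
\[
\tau = CM^2\|\Sigma_Z\|_{\text{op}}\Bigl(\sqrt{(K+t)/n} + (K+t)/n\Bigr)
\]
for $C$ large enough that $cn\min(\tau^2/\sigma^2,\tau/\sigma)$ dominates $(K+t)\log 9 + t + \log 2$. Combined with the factor $2$ from the net-approximation step this yields the stated bound with probability at least $1-2\exp(-t)$. The main obstacle is purely bookkeeping: aligning the Gaussian regime $\tau^2/\sigma^2$ of Bernstein with the $\sqrt{(K+t)/n}$ rate and the Poisson regime $\tau/\sigma$ with the $(K+t)/n$ rate, and correctly propagating the $M$-scaling through the squaring that turns sub-Gaussian summands into sub-exponential ones, so that the final dependence on the sub-Gaussian parameter is exactly $M^2$ rather than a higher power of $M$.
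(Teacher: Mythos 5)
Your proof is correct, but it takes a somewhat different route from the paper's. The paper whitens first, setting $U_i = \Sigma_Z^{-1/2}Z_i$ so that the hypothesis gives $\|U_i\|_{\psi_2}\le M$ with $\cov(U_i)=I_K$, invokes Theorem 4.6.1 of \cite{vershynin2018high} for isotropic sub-Gaussian vectors to get $\|\wh\Sigma_u - I_K\|_{\text{op}}\le CM^2\max\{\delta,\delta^2\}$, and then un-whitens via $\|\wh\Sigma_Z-\Sigma_Z\|_{\text{op}}\le \|\wh\Sigma_u-I_K\|_{\text{op}}\|\Sigma_Z\|_{\text{op}}$. You instead run the net-plus-Bernstein argument (which is the engine inside that cited theorem) directly on the anisotropic $Z_i$, using the hypothesis only through the uniform bound $\|u^\top Z_i\|_{\psi_2}^2\le M^2\|\Sigma_Z\|_{\text{op}}$ on the sphere; your bookkeeping of the two Bernstein regimes against $\sqrt{(K+t)/n}$ and $(K+t)/n$, the $9^K$ union bound, and the $M^2$ scaling all close correctly. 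What each buys: the paper's version is shorter because it outsources the concentration to a black-box theorem, but it implicitly needs $\Sigma_Z^{-1/2}$ to exist (a nonissue here since (C4) forces $\lambda_{\min}(\Sigma_Z)\ge C_{\min}^2$, but still an extra step); your version is self-contained, needs no invertibility, and makes the provenance of the $M^2$ and $\|\Sigma_Z\|_{\text{op}}$ factors explicit. Note also that you have (correctly, and as the paper's own usage requires) read the stated hypothesis as $\|u^\top Z_i\|_{\psi_2}^2\le M^2E(u^\top Z_i)^2$; as printed in the lemma it mixes the norm with its square.
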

\begin{proof}
	This result comes from Exercise 4.7.3 of \cite{vershynin2018high}. For the sake of completeness, we provide a here a brief proof. Let $U_i = \Sigma_Z^{-1/2}Z_i$. Then by lemma condition, we have $E(U_i) = 0$, $\cov(U_i) = I_K$ and $\|U_i\|_{\psi_2}\le M$. Define $\wh \Sigma_u = n^{-1}\sum_{i=1}^n U_i U_i^\top$ Then by Theorem 4.6.1 of \cite{vershynin2018high}, we have $\|\wh \Sigma_u - I_K\|_{\text{op}}\le M^2\max\{\delta,\delta^2 \}$ with probability $1-2\exp(-u)$, where $\delta = C_0(\sqrt{K/n} + \sqrt{u/n})$ for some constant $C_0>0$. Let $C = \max\{\sqrt 2C_0, 2C_0^2\}$, we have $\|\wh \Sigma_u - I_K\|_{\text{op}}\le CM^2\{\sqrt{K+u/n} + (K+u)/n\}$ with probability $1-2\exp(-u)$. Then $\|\wh \Sigma_Z - \Sigma_Z\|_{\text{op}} \le \|\wh \Sigma_u - I_K\|_{\text{op}} \|\Sigma_Z\|_{\text{op}} \le CM^2\{\sqrt{K+u/n} + (K+u)/n\}\|\Sigma_Z\|_{\text{op}}$ with probability $1-2\exp(-u)$. \\
\end{proof}

\begin{lemma}
	(Restricted Strong Convexity for Correlated Sub-Gaussian Features) Let $X_1,X_2,\dots, X_n \in \mR^p$ be a sequence of independent and identically distributed zero-mean sub-Gaussian random vectors with $E(X_iX_i^\top ) = \Sigma$ and $\|X_i\|_{\psi_2}\le C_{\psi_2}$. Define $\wh \Sigma = n^{-1}\sum_{i=1}^nX_iX_i^\top$. If $n \ge C_1 \max\{C_{\psi_2}^4/\lambda_{\min}(\Sigma), 1\}\log p$ for some constant $C_1>0$. Then for any $u\in \mR^p$ there exists a universal constant $C>0$ such that
	\[
	u^\top \wh \Sigma u\ge \frac{\lambda_{\min}(\Sigma)}{2} \|u\|^2 - C\lambda_{\min}^2(\Sigma)\max\bigg\{\frac{C_{\psi_2}^4}{\lambda_{\min}^2(\Sigma)},1\bigg\}\bigg(\frac{\log p}{n}\bigg)\|u\|_1^2
	\]
	with probability at least $1-2\exp(-\log p)$. \label{Lemma rsc}
\end{lemma}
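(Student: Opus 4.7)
The plan is to follow the standard restricted strong convexity argument of Raskutti, Wainwright and Yu (2010), as refined in Loh and Wainwright (2012, Lemmas 12--13). The core idea is to decompose
$$u^\top \wh\Sigma u = u^\top \Sigma u + u^\top(\wh\Sigma - \Sigma)u,$$
bound the first term below by $\lambda_{\min}(\Sigma)\|u\|^2$, and show a uniform bound of the form $|u^\top(\wh\Sigma - \Sigma)u|\le (\lambda_{\min}(\Sigma)/2)\|u\|^2 + \tau \|u\|_1^2$ with the announced value of $\tau$. The difficulty is that we cannot rely on operator-norm concentration of $\wh\Sigma$ (which would demand $n \gg p$); instead we must produce a remainder that is linear in $\|u\|_1^2$ rather than $\|u\|^2$.

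The first step is a single-vector concentration bound. For fixed $u$ with $\|u\|_2 = 1$, the variable $u^\top(\wh\Sigma - \Sigma) u = n^{-1}\sum_{i=1}^n\{(u^\top X_i)^2 - E(u^\top X_i)^2\}$ is a centered sum of i.i.d.\ sub-exponential variables with parameter $\lesssim C_{\psi_2}^2$. Applying the generalized Hanson--Wright inequality (Lemma 1) with the rank-one choice $B = u$ gives, for every $t>0$,
$$P\Bigl(|u^\top(\wh\Sigma - \Sigma) u| \ge C C_{\psi_2}^2\bigl(\sqrt{t/n} + t/n\bigr)\Bigr) \le 2\exp(-t).$$

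The second step is to make this bound uniform over sparse unit vectors. For each integer $s\le p$, let $\mathbb{A}(s) = \{u: \|u\|_2 = 1,\ \|u\|_0 \le s\}$. A standard volumetric argument produces a $1/4$-net of $\mathbb{A}(s)$ of cardinality at most $\binom{p}{s}9^s \le (9ep/s)^s$. Taking a union bound over this net with $t \asymp s\log(ep/s)$ and converting net-concentration back to supremum concentration in the usual way, one obtains
$$\sup_{u\in \mathbb{A}(s)}|u^\top(\wh\Sigma-\Sigma)u| \le C' C_{\psi_2}^2 \sqrt{\frac{s\log(ep/s)}{n}}$$
with probability at least $1-2\exp(-c s\log(ep/s))$ (absorbing $\log p$ on the exponential side).

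The third step is to pass from sparse vectors to arbitrary $u$ via a peeling/convexification argument. Writing $r = \|u\|_1/\|u\|_2$ and decomposing $u$ into a sum of vectors whose effective sparsity is $r^2$, the bound on $\mathbb{A}(s)$ translates (as in Lemma 13 of Loh--Wainwright, 2012) into
$$\sup_{u\neq 0} \frac{|u^\top(\wh\Sigma-\Sigma)u|}{\|u\|_2^2 + (\log p/n)\|u\|_1^2} \le C'' \max\Bigl\{\frac{C_{\psi_2}^4}{\lambda_{\min}(\Sigma)},\,\lambda_{\min}(\Sigma)\Bigr\}$$
with probability at least $1 - 2\exp(-\log p)$. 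Rearranging and combining with $u^\top \Sigma u\ge \lambda_{\min}(\Sigma)\|u\|^2$ yields the stated lower bound.

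The main obstacle will be the third step. The linear-in-$\|u\|_1^2$ remainder cannot be obtained from a single concentration bound; it requires the peeling argument over a geometric sequence of sparsity levels and a careful balancing of constants so that the uniform error on $\mathbb{A}(s)$ is absorbable into $(\lambda_{\min}(\Sigma)/2)\|u\|^2$. This is exactly where the hypothesis $n \ge C_1 \max\{C_{\psi_2}^4/\lambda_{\min}(\Sigma),1\}\log p$ is used: it ensures $\sqrt{\log p/n}$ is small enough, relative to $\lambda_{\min}(\Sigma)/C_{\psi_2}^2$, that the excess term on each sparsity shell fits under the $\lambda_{\min}(\Sigma)/2$ budget, producing the target constant in the $\|u\|_1^2$ coefficient.
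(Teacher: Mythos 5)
Your proposal is correct and follows essentially the same route as the paper: the paper's ``proof'' of this lemma is simply a citation to Lemma 1 of Loh and Wainwright (2012) and Lemma B.2 of Zhu (2018), and your three-step argument (pointwise sub-exponential concentration, union bound over nets of $s$-sparse unit vectors, then the peeling/convexification step of Loh--Wainwright Lemma 13 to trade sparsity for an $\|u\|_1^2$ remainder) is precisely the argument underlying those references. The only blemish is that the constant in your final displayed supremum bound, $C''\max\{C_{\psi_2}^4/\lambda_{\min}(\Sigma),\lambda_{\min}(\Sigma)\}$, is off by a factor of $\lambda_{\min}(\Sigma)$ from the target coefficient $C\max\{C_{\psi_2}^4,\lambda_{\min}^2(\Sigma)\}$ and, as written, could not be absorbed under the $\lambda_{\min}(\Sigma)/2$ budget; this is a bookkeeping slip rather than a flaw in the method.
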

\begin{proof}
	The proof of the lemma could be derived by Lemma 1 of \cite{loh2012high} and be found in Lemma B.2 of \cite{zhu2018sparse}.
\end{proof}

\begin{lemma}
	(Convergence Rate of $\wh {\bm w}_{\text{ptl}}$) Under the same conditions as in Theorem \ref{Theorem convergence rate}, then there exist constants $C_0$, $C_1, C_2$ and $C_3$ such that for any $t \le C_0 n$, we have $P(\|\wh {\bm w}_{\text{ptl}} -\bm w\|^2 \le C_1\sum_{k=1}^K s_{\max} \log p/n_k + C_2(K+t)/n ) \ge 1- \exp(-C_3t)$. \label{Lemma rate w}
\end{lemma}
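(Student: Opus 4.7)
\textit{Proof proposal for Lemma 4.}

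The plan is to write $\wh{\bm w}_{\text{ptl}} - \bm w$ as a single ratio and then control numerator and denominator separately. Set $\wh A = n^{-1}\sum_{i=1}^n \wh Z_i \wh Z_i^\top = \wh B^\top \wh\Sigma \wh B$ where $\wh\Sigma = n^{-1}\sum_i X_i X_i^\top$. Substituting $Y_i = Z_i^\top \bm w + X_i^\top \bm\delta + \ve_i$ and using $\wh Z_i - Z_i = (\wh B - B)^\top X_i$, I would obtain the decomposition
\[
\wh{\bm w}_{\text{ptl}} - \bm w = \wh A^{-1}\bigl(T_1 + T_2 + T_3\bigr),
\]
where $T_1 = -\wh B^\top \wh\Sigma(\wh B - B)\bm w$, $T_2 = \wh B^\top \wh\Sigma\bm\delta$, and $T_3 = n^{-1}\sum_{i=1}^n \wh Z_i \ve_i$. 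The identification condition $B^\top\Sigma\bm\delta = 0$ is the reason $T_2$ will not blow up. Throughout, I would condition on the source data so that $\wh B$ is treated as a fixed matrix satisfying $\|\wh B - B\|_F^2 \lesssim \sum_{k=1}^K s_{\max}\log p/n_k$ on the event from (C1) (by taking union over $k$).

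The first step is to control $\wh A^{-1}$. Writing $\wh B = \wh Q \wh R$ (thin QR), I would apply Lemma 2 to the $K$-dimensional vectors $\wh Q^\top X_i$ (which is valid because $\wh Q$ is fixed after conditioning) to show $\|\wh Q^\top \wh\Sigma \wh Q - \wh Q^\top \Sigma \wh Q\|_{\text{op}} = O_p(\sqrt{K/n} + K/n)$. Combined with (C4) and the fact that $\wh B^\top \wh B$ is close to $B^\top B$ (hence lower-bounded), this yields $\lambda_{\min}(\wh A) \ge C > 0$ with probability at least $1 - 2\exp(-c(K+t))$, so $\|\wh A^{-1}\|_{\text{op}} = O_p(1)$.

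Next I bound the three numerator terms. For $T_1$, I view it as $n^{-1}\sum_i W_i V_i^\top \bm w$ with $W_i = \wh B^\top X_i$ and $V_i = (\wh B - B)^\top X_i$, both $K$-dimensional sub-Gaussian vectors once we condition on source data. Applying Lemma 2-style cross-covariance concentration gives $\|\wh B^\top \wh\Sigma(\wh B - B)\|_{\text{op}} \lesssim \|\wh B - B\|_{\text{op}}$, so $\|T_1\|^2 \lesssim \|\wh B - B\|_F^2 \lesssim \sum_{k=1}^K s_{\max}\log p/n_k$. For $T_3$, I split $T_3 = B^\top n^{-1}\sum_i X_i\ve_i + (\wh B - B)^\top n^{-1}\sum_i X_i\ve_i$. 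The first summand is a $K$-dimensional mean-zero sub-exponential average with trace-variance $\lesssim K/n$, so sub-exponential vector concentration (or Lemma 1 applied coordinatewise with an $\varepsilon$-net on $\mR^K$) gives $\lesssim (K+t)/n$ with probability $1 - \exp(-ct)$. The second summand is bounded conditionally on source data by $\|\wh B - B\|_F^2 / n \cdot O_p(1)$, which is lower order.

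The main obstacle will be $T_2$, since $\bm\delta$ is $p$-dimensional and not sparse (only $\|\bm\delta\|_1 \le h$, so $\|\bm\delta\|\le h$). Here I would exploit the identification condition by writing
\[
T_2 = (\wh B - B)^\top \wh\Sigma\bm\delta + B^\top(\wh\Sigma - \Sigma)\bm\delta,
\]
using $B^\top\Sigma\bm\delta = 0$. For the first piece, conditioning on source data, each of the $K$ coordinates is $n^{-1}\sum_i (X_i^\top(\wh{\bm\beta}^{(k)} - \bm\beta^{(k)}))(X_i^\top\bm\delta)$, a sub-exponential average with mean bounded by $\|\wh{\bm\beta}^{(k)} - \bm\beta^{(k)}\|\cdot C_{\max}\cdot h$ and with Bernstein-type deviation on the order $\|\wh{\bm\beta}^{(k)} - \bm\beta^{(k)}\|\cdot h\cdot\sqrt{(t+\log K)/n}$; summing over $k$ gives $h^2\|\wh B - B\|_F^2\lesssim h^2\sum_k s_{\max}\log p/n_k$. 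For the second piece I would view $B^\top(\wh\Sigma - \Sigma)\bm\delta$ as a $K$-dimensional mean-zero average of sub-exponential vectors with individual $\psi_1$-norm $\lesssim h$, so Bernstein combined with union bound (or vector-Bernstein) yields $\|B^\top(\wh\Sigma - \Sigma)\bm\delta\|^2 \lesssim h^2(K+t)/n$. Since $h = O(1)$ by (C2), this folds into the $(K+t)/n$ rate.

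Combining the three bounds and $\|\wh A^{-1}\|_{\text{op}} = O_p(1)$ via $\|\wh{\bm w}_{\text{ptl}} - \bm w\|^2 \lesssim \|\wh A^{-1}\|_{\text{op}}^2(\|T_1\|^2 + \|T_2\|^2 + \|T_3\|^2)$ and intersecting the high-probability events (each of probability at least $1 - \exp(-ct)$ for suitable $c$), I obtain the claimed rate $\sum_{k=1}^K s_{\max}\log p/n_k + (K+t)/n$ with probability at least $1 - \exp(-C_3 t)$, valid for $t\le C_0 n$ so that the remainders from Lemma 2 and Bernstein are of the quoted form.
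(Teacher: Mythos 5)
Your decomposition $\wh{\bm w}_{\text{ptl}} - \bm w = \wh A^{-1}(T_1+T_2+T_3)$ is term-for-term the paper's own ($T_1,T_2,T_3$ are exactly its $Q_1,Q_2,Q_3$ and $\wh A$ its $\wh\Sigma_{\wh Z}$), and you treat each piece the same way: condition on the source data, lower-bound the Gram matrix via sub-Gaussian covariance concentration, use $B^\top\Sigma\bm\delta=0$ to make the $\bm\delta$-term mean-zero, and apply Bernstein with an $\varepsilon$-net for the $K$-dimensional averages. The only cosmetic difference is your thin QR factorization to invoke the covariance lemma, where the paper expands $\wh\Sigma_{\wh Z}$ directly into $\Sigma_Z$ plus three remainder matrices; the proposal is correct and essentially identical to the paper's proof.
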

\begin{proof}
	Recall that $\wh {\bm w}_{\text{ptl}} = \argmin_{\bm w} \mL_w(\bm w, \wh Z)$. Then by standard ordinary least squares, we have
	\beqrs
	\wh {\bm w}_{\text{ptl}}  &=& \Big(n^{-1}\sum_{i=1}^n \wh Z_i \wh Z_i^\top\Big)^{-1} \Big\{n^{-1}\sum_{i=1}^n \wh Z_i(Y_i - \wh Z_i^\top w)\Big\}\\
	&=& \bm w + \wh \Sigma_{\wh Z}^{-1}(Q_1 + Q_2+Q_3),
	\eeqrs
	where $\wh \Sigma_{\wh Z} = n^{-1}\sum_{i=1}^n \wh Z_i \wh Z_i^\top$, $Q_1 = n^{-1}\sum_{i=1}^n \wh Z_i (Z_i - \wh Z_i)^\top \bm w$, $Q_2 = n^{-1}\sum_{i=1}^n \wh Z_i X_i^\top \bm \delta$ and $Q_3 = n^{-1}\sum_{i=1}^n \wh Z_i \ve_i$. We next evaluated the quantities $\wh \Sigma_{\wh Z}$, $Q_1$, $Q_2$ and $Q_3$ subsequently. The desired conclusion follows if we can show the following conclusions hold with probability at least $1-\exp(-C_3t)$: (1) $\lambda_{\min}(\wh\Sigma_{\wh Z})\ge \tau_{\min}$; (2) $\|Q_1\| \le \tau_{\max}(\sum_{k=1}^Ks_{\max} \log p/n_k )^{1/2}$; (3) $\|Q_2\| \le \tau_{\max}h[(\sum_{k=1}^Ks_{\max} \log p/n_k )^{1/2}+\{(K+t)/n\}^{1/2}]$; and (4) $\|Q_3\| \le \tau_{\max}\{(K+ t)/n\}^{1/2}$ for some positive constants $\tau_{\max}, \tau_{\min} >0$.
	
	\textit{Step 1.} We first consider the term $\wh\Sigma_{\wh Z} = \Sigma_Z + \Delta_{z1} + \Delta_{z2} + \Delta_{z2}^\top + \Delta_{z3} $, where $\Sigma_Z = E(Z_iZ_i^\top) = B^\top \Sigma B$, $\Delta_{z1} =  n^{-1}\sum_{i=1}^n(\wh Z_i - Z_i)(\wh Z_i - Z_i)^\top$, $\Delta_{z2} = n^{-1}\sum_{i=1}^n Z_i(\wh Z_i - Z_i)^\top$ and $\Delta_{z3} = n^{-1}\sum_{i=1}^n Z_iZ_i^\top - \Sigma_Z$. Then we next show that $\|\Delta_{zj}\|_{\text{op}} = o(1)$ for $j = 1,2,3$ with high probability as follows.
	
	\textit{Step 1.1.} We begin with $\Delta_{z1}$. By triangle inequality, we have
	$\|\Delta_{z1}\|_F^2 \le n^{-1}\sum_{i=1}^n \|\wh Z_i - Z_i \|^2 = n^{-1}\sum_{i=1}^n \|(\wh B - B)^\top X_i\|^2$. 
	Note that $\wh B - B$ is independent with $X_i$. Then by Lemma \ref{Lemma hanson wright}, we know that conditional on $\wh B - B$, $\|(\wh B - B)X_i\|^2$ is sub-exponential with $\Big\| \|(\wh B - B)^\top X_i\|^2\Big\|_{\psi_1}\le C_1\|\wh B - B\|_F^2\|X_i\|_{\psi_2}^2 \le C_1C_{\psi_2}^2\|\wh B - B\|_F^2$ for some constant $C_1>0$, where $\|U\|_{\psi_1} = \inf\{t>0: E\exp(|U|/t)\le 2\}$ is the sub-exponential norm of a sub-exponential random variable. Then by Bernstein's inequality \citep{vershynin2018high} and condition (C2), we have
	\[
	P\bigg(n^{-1}\sum_{i=1}^n \Big\|(\wh B - B)X_i\Big\|^2 - \tr\Big\{(\wh B - B)\Sigma (\wh B - B)^\top\Big\} \ge C_1C_{\psi_2}^2 \|\wh B - B \|_F^2 \bigg) \le \exp(-c_1n)
	\]
	for some constant $c_1>0$. Note that if $A\le B$, then $\tr(A)\le \tr(B)$ for two arbitrary symmetric matrices $A,B\in\mR^{p\times p}$ \citep{seber2008matrix}. Then by condition (C4), we have $\tr\{(\wh B - B)\Sigma (\wh B - B)^\top\} \le C_{\max} \|\wh B - B\|_F^2$. We have $P(n^{-1}\sum_{i=1}^n \|(\wh B - B)X_i\|^2 \ge C_2 \|\wh B - B \|_F^2) \le \exp(-c_1 n) $, where $C_2 = C_{\max} + C_1C_{\psi_2}^2$.
	By condition (C4), we can show that $P(\|\wh B - B\|_F^2 \ge C_3\sum_{k=1}^Ks_{\max } \log p/n_k )\le \sum_{k=1}^K P(\|\wh{\bm \beta}^{(k)}-\bm  \beta^{(k)}\|^2 \ge C_3s_{\max }\log p/n_k) \le \exp (-c_1\log p)$ for some $C_3>0$. Then we know that
	\beqrs
	& &P\bigg(n^{-1}\sum_{i=1}^n \Big\|(\wh B - B)X_i\Big\|^2\ge C_4 \sum_{k=1}^K\frac{s_{\max}\log p}{n_k} \bigg)  \\
	&=& E\bigg\{P\Big(n^{-1}\sum_{i=1}^n \Big\|(\wh B - B)X_i\Big\|^2\ge C_4 \sum_{k=1}^K\frac{s_{\max}\log p}{n_k} \Big| \wh B - B \Big) \bigg\} \\
	&\le& P\bigg(\|\wh B - B\|_F^2 \ge C_3 \sum_{k=1}^K\frac{s_{\max}\log p}{n_k}\bigg) \\
	&+& E\bigg\{P\Big(n^{-1}\sum_{i=1}^n \Big\|(\wh B - B)X_i\Big\|^2\ge C_2\|\wh B - B\|_F^2\Big| \wh B - B \Big)\bigg\}\\
	&\le& 2\exp(-c_1\log p),
	\eeqrs
	where $C_4 = C_2C_3$ and the last inequality holds by $n \ge \log p$. As a result, we have $P(\|\Delta_{z1}\|_F\ge C_4\sum_{k=1}^Ks_{\max} \log p/n_k ) \le 2\exp(-c_1\log p)$.
	
	\textit{Step 1.2. }Then we consider $\Delta_{z3}$.  To apply Lemma \ref{Lemma sample cov}, we need to bound $\|Z_i\|_{\psi_2}$, $M$ and $\|\Sigma_Z\|_{\text{op}}$. Note that $\|Z_i\|_{\psi_2} = \sup_{\|u\| = 1} \|u^\top Z_i\|_{\psi_2} = \sup_{\|u\| = 1}\|u^\top B^\top X_i\|_{\psi_2} \le \sup_{\|u\| = 1}\|Bu\|\|X_i\|_{\psi_2}  = \|B\|\|X_i\|_{\psi_2}\le C_{\psi_2}C_{\max}^{1/2}$. As a result, $Z_i$ is sub-Gaussian with $\|Z_i\|_{\psi_2} \le C_{\psi_2}C_{\max}^{1/2}$.   Without loss of generality, we next obtain an upper bound for $M$ when the vector $u \in\mR^p $ satisfies $\|u\| =1$ (otherwise consider $u_0 = u / \|u\|$). By condition (C3) and (C4), we have $\|u^\top X_i\|_{\psi_2}^2\le \|X_i\|_{\psi_2}^2\le C_{\psi_2}^2\le C_{\psi_2}^2C_{\min}^{-1} \lambda_{\min}(\Sigma) \le  C_{\psi_2}^2C_{\min}^{-1}u^\top \Sigma u = C_{\psi_2}^2C_{\min}^{-1}E(u^\top X_i)^2$. 
	Then we know that $\|u^\top Z_i\|_{\psi_2}^2 = \|(Bu)^\top X_i\|_{\psi_2}^2 \le C_{\psi_2}^2C_{\min}^{-1} E(u^\top B^\top X_i)^2 = C_{\psi_2}^2C_{\min}^{-1} E(u^\top Z_i)^2$. As a result, we know that $M \le C_{\psi}^2 C_{\min}^{-1}$. Note that $\|\Sigma_Z\|_{\text{op}} = \|B^\top \Sigma B\|_{\text{op}}\le C_{\max }\|B\|_{\text{op}}^2\le C_{\max}^2$. Then by Lemma \ref{Lemma sample cov}, we have
	\[
	P\bigg(\|\wh \Sigma_Z - \Sigma_Z\|_{\text{op}} \ge C_5 \Big(\frac{K + \log p}{n}\Big)^{1/2}\bigg) \le \exp(-c_1\log p)
	\]
	for some constant $C_5 >0$. As a result, we know that $P(\|\Delta_{z3}\|_{\text{op}} \ge C_5\{(K+\log p) / n\}^{1/2}) \le \exp(-c_1\log p)$.
	
	\textit{Step 1.3. }We next study $\Delta_{z2}$. By Cauchy-Schwarz inequality and note that $\| A\| = \sup_{\|u\| = 1,\| v \| = 1} u^\top A v$ for an arbitrary matrix $A$ \citep{vershynin2018high}, we can obtain the following inequality
	\beqrs
	\|\Delta_{z2}\|_{\text{op}} &=& \Big\|n^{-1}\sum_{i=1}^n Z_i(\wh Z_i - Z_i)^\top \Big\|_{\text{op}}\\
	&= &\sup_{\|u\| = 1,\| v \| = 1} n^{-1}\sum_{i=1}^n u^\top Z_i(\wh Z_i - Z_i)^\top v\\
	&\le& \sup_{\|u\| = 1,\| v \| = 1} \bigg(n^{-1}\sum_{i=1}^n u^\top Z_iZ_i^\top u\bigg)^{1/2} \bigg(n^{-1}\sum_{i=1}^n v^\top (\wh Z_i - Z_i)(\wh Z_i - Z_i)^\top v\bigg)^{1/2}\\
	&\le& \Big\|n^{-1}\sum_{i=1}^n Z_iZ_i^\top \Big\|_{\text{op}}^{1/2} \bigg(n^{-1}\sum_{i=1}^n \| \wh Z_i - Z_i\|^2 \bigg)^{1/2}\\
	&\le& \Big(\|\wh \Sigma_Z - \Sigma_Z\|_{\text{op}} + \|\Sigma_Z\|_{\text{op}} \Big)^{1/2}\bigg(n^{-1}\sum_{i=1}^n \| \wh Z_i - Z_i\|^2 \bigg)^{1/2} \\
	&\le& \bigg\{C_5 \Big(\frac{K + \log p}{n} \Big)^{1/2}+ C_{\max}^2 \bigg\}^{1/2}\bigg(C_4\sum_{k=1}^K\frac{s_{\max} \log p}{n_k}\bigg)^{1/2}
	\eeqrs
	with probability $1-3\exp(-c_1\log p)$. By condition (C2), we know that $P(\|\Delta_{z3}\|_{\text{op}}\ge C_6(\sum_{k=1}^K s_{\max}\log p/n_k)^{1/2}) \le \exp(-c_1\log p)$ for some constant $C_6 >0$.
	
	By condition (C4), we have $\lambda_{\min}(\Sigma_Z)  = \lambda_{\min}(B^\top \Sigma B) = \inf_{\|u\| = 1} u^\top B^\top \Sigma B u \ge C_{\min}\inf_{\|u\| = 1}u^\top B^\top Bu \ge C_{\min}^2$. Combining the above results and by condition (C2), we have $\lambda_{\min}(\wh \Sigma_{\wh Z}) \ge \lambda_{\min}(\Sigma_Z) - \|\Delta_{z1}\|_F - 2\|\Delta_{z2}\|_{\text{op}} - \|\Delta_{z3}\|_{\text{op}} \ge C_{\min}/2$ with probability at least $1- \exp(-c_2\log p)$ for some $c_2 >0$.
	
	\textit{Step 2. }Then we consider the term $Q_1$. We can decompose it into $Q_1 = Q_{11} + Q_{12}$, where $Q_{11} = n^{-1}\sum_{i=1}^n (\wh Z_i - Z_i)(Z_i - \wh Z_i)^\top \bm w$ and $Q_{12} = n^{-1}\sum_{i=1}^n Z_i (Z_i - \wh Z_i)^\top \bm w$. By the proof in Step 1.1, we know that  $\|Q_{11}\| = \sup_{\|u\| = 1} u^\top \Delta_{z1} \bm w \le \|\Delta_{z1}\|_{\text{op}} \|\bm w\| \le \|\Delta_{z1}\|_{F} \|\bm w\| \le  C_4C_w\sum_{k=1}^Ks_{\max} \log p/n_k $ and $\|Q_{12}\| = \sup_{\|u\| = 1} u^\top \Delta_{z2}\bm w \le \|\Delta_{z2}\|_{\text{op}} \|\bm w\|  \le C_6C_w(\sum_{k=1}^Ks_{\max} \log p/n_k)^{1/2}$ with probability at least $1-2\exp(-c_2\log p)$.
	
	\textit{Step 3. }We next consider the term $Q_2$. Similarly, it can be decomposed as $Q_2 = Q_{21} + Q_{22}$, where $Q_{21} = n^{-1}\sum_{i=1}^n (\wh Z_i - Z_i)X_i^\top \bm \delta$ and $Q_{22} = n^{-1}\sum_{i=1}^n Z_i X_i^\top\bm \delta$. Then we consider the two terms subsequently.
	
	\textit{Step 3.1. }We first consider the term $Q_{21}$. By Cauchy-Schwarz inequality, we have
	\beqrs
	\|Q_{21}\| &=& \Big\| n^{-1}\sum_{i=1}^n (\wh B - B)^\top X_i X_i^\top \bm \delta\Big\| \\
	&\le &\Big\|n^{-1}\sum_{i=1}^n (\wh B - B )^\top X_iX_i^\top (\wh B -B)\Big\|_{\text{op}} \Big(n^{-1}\sum_{i=1}^n\bm\delta^\top X_iX_i^\top \bm \delta\Big)^{1/2}.
	\eeqrs
	By Step 1.1.1, we know that $\|n^{-1}\sum_{i=1}^n (\wh B - B )^\top X_iX_i^\top (\wh B -B)\|_{\text{op}} \le n^{-1}\sum_{i=1}^n \|X_i^\top (\wh B - B)\|_{\text{op}}^2 \le C_4\sum_{k=1}^Ks_{\max} \log p/n_k $ with probability at least $1-2\exp(-c_1\log p)$. Note that $\bm \delta^\top X_i$ is sub-Gaussian with $\|\bm \delta^\top X_i\|_{\psi_2} \le \|\bm \delta\| \|X_i\|_{\psi_2} \le C_{\psi_2}\|\bm \delta\|_1 \le C_{\psi_2}h$. As a result, $(\bm \delta^\top X_i)^2$ is sub-exponential with $\|(\bm \delta^\top X_i)^2\|_{\psi_1} = \|\bm \delta^\top X_i\|_{\psi_2}^2 \le C_{\psi_2}^2h^2 $. Then by Bernstein's inequality \citep{vershynin2018high} and condition (C2), we have
	\[
	P\Big(\Big|n^{-1}\sum_{i=1}^n \bm \delta^\top X_iX_i^\top \bm \delta - \bm \delta^\top \Sigma \bm \delta \Big| \ge C_6C_{\psi_2}^2n^{-1}h^2\log p\Big) \le 2\exp(-c_1\log p)
	\]
	for some constant $C_6>0$. Note that $\bm \delta^\top \Sigma\bm \delta \le C_{\max} \|\bm \delta\|^2 \le C_{\max}h^2$. Then we know that $n^{-1}\sum_{i=1}^n \bm \delta^\top X_iX_i^\top \bm \delta \le h^2(C_{\max} + C_6C_{\psi_2}^2n^{-1}\log p)$ with probability at least $1- 2\exp(-c_1\log p)$. As a result, $\|Q_{21}\|\le C_7h(\sum_{k=1}^Ks_{\max} \log p/n_k )^{1/2}$ with probability at least $1-\exp(-c_1\log p)$ for some constant $C_7 \ge C_4(C_{\max} + C_6C_{\psi_2}^2n^{-1}
	\log p)^{1/2}$.
	
	\textit{Step 3.2. }Then we consider the term $Q_{22}$. We apply the $\ve$-net technique here. For the unit sphere $\mathbb S^{K-1} = \{u\in\mR^K: \|u\| = 1\}$. There exists a 1/2-net $\mN$ such that $|\mN|\le 5^K$. Let $u_0\in\mathbb S^{K-1}$ satisfying $u_0^\top Q_{22} = \|Q_{22}\|$. Then there exists a $u\in\mN$ such that $\|u - u_0 \| \le 1/2$. Then we have $\|Q_{22}\| = |u_0^\top Q_{22}|\le |(u_0 - u)^\top Q_{22}|+|u^\top Q_{22} |\le 2^{-1}\|Q_{22}\| + |u^\top Q_{22}|$. As a result, we have $\|Q_{22}\| \le 2\sup_{u\in\mN} |u^\top Q_{22}|$. Then for a fixed $u\in\mathbb S^{n-1}$, consider $u^\top Q_{22} =  n^{-1}\sum_{i=1}^n u^\top Z_iX_i^\top \bm \delta$. Note that $X_i^\top \bm \delta$ is sub-Gaussian with $\|X_i^\top \bm \delta\|_{\psi_2} \le C_{\psi_2}h$. For all $u\in \mR^K$ satisfying $\|u\|=1$, $u^\top Z_i$ is sub-Gaussian with $\|u^\top Z_i\|_{\psi_2} \le \|Z_i\|_{\psi_2}\le C_{\psi_2}C_{\max}^{1/2}$. As a result $u^\top Z_i X_i^\top\bm \delta$ is sub-exponential with $\|u^\top Z_i X_i^\top \bm \delta\|_{\psi_1} \le C_{\psi_2}^2 C_{\max}^{1/2}h$. By Bernstein's inequality, for all $\eta>0$, we have
	\[
	P\bigg(\Big|n^{-1}\sum_{i=1}^n u^\top Z_i X_i^\top \bm \delta\bigg|\ge \eta hC_{\psi_2} C_{\max}^{1/2}\bigg)\le 2\exp\Big\{-cn\min(\eta,\eta^2)\Big\}.
	\]
	Then we know that $P(\|Q_{22}\|\ge \eta hC_{\psi_2} C_{\max}^{1/2})
	\le P(\sup_{u\in\mN} |n^{-1}\sum_{i=1}^n u^\top Z_i X_i^\top \bm \delta|\ge \eta hC_{\psi_2} C_{\max}^{1/2})\le 2\times 5^K\exp\{-cn\min(\eta,\eta^2)\}$. Take $\eta = C_8\{(K+t)/n\}^{1/2}<1$ for some large enough $C_8>0$ such that we have $
	P(\|Q_{22}\|\ge C_8h\{(K+ t)/n\}^{1/2})\le \exp(-c_1t)$.

	\textit{Step 4. } Lastly we consider $Q_3 = Q_{31} + Q_{32}$ where $Q_{31} = n^{-1}\sum_{i=1}^n (\wh Z_i - Z_i)\ve_i$ and $Q_{32} = n^{-1}\sum_{i=1}^n Z_i \ve_i$. Then we evaluate $Q_{31}$ and $Q_{32}$ subsequently.
	
	\textit{Step 4.1.} We first consider $Q_{31}$. Similar to the proof in Step 1.2, we know that conditional on $\wh B - B$, $(\wh Z_i - Z_i) = (\wh B - B)^\top X_i$ is sub-Gaussian with $\| \wh Z_i - Z_i\|_{\psi_2} \le \|\wh B - B\| \|X_i\|_{\psi_2}$. Note that $\ve_i$ is sub-Gaussian. By a similar proof of Step 3.2, we have
	\[
	P\bigg(\|Q_{31}\|\ge C_9\Big(\frac{K+\log p}n\Big)^{1/2}\|\wh B - B\|_{\text{op}}\Big| \wh B - B\bigg) \le \exp(-c_1\log p)
	\]
	for some constant $C_9>0$. By Step 1.1, we know that $\|\wh B - B\|_{\text{op}} \le \|\wh B - B\|_F \le C_4 \sum_{k=1}^K s_{\max} \log p /n_k$ with probability $1-2\exp(-c\log p)$. Then by a similar procedure of Step 1.1, we can show that
	\[
	P\Bigg(\|Q_{31}\|\ge C_{10}\bigg(\frac{K + \log p}n\sum_{k=1}^K\frac{s_{\max }\log p}{n_k}\bigg)^{1/2}\Bigg) \le \exp(-c_1\log p	 ).
	\]
	
	\textit{Step 4.2. }Lastly, we consider $Q_{32}$. Recall that $Z_i$ is sub-Gaussian with $\|Z_i\|_{\psi_2} \le LC_{\max}^{1/2}$. Then by a similar proof of Step 3.2, we have $ P(\|Q_{32}\| \ge C_{11}\{(K+ t)/n\} ^{1/2}) \le \exp(-c_1t)$.
	Recall that we have shown that $\lambda_{\min}(\wh\Sigma_{\wh Z}) \ge C_{\min} / 2$ with probability at least $1-\exp(-c_2\log p)$. Combining the above results and by the Cauchy-Schwarz inequality, we know that
	\beqrs
	\|\wh {\bm w}_{\text{ptl}} - \bm w\|^2 &\le& 2C_{\min}^{-1} (\wh {\bm w}_{\text{ptl}} - \bm w)^\top\wh \Sigma_{\wh Z} (\wh {\bm w}_{\text{ptl}} - \bm w) \\&\le & 2C_{\min}^{-1}\Big\|(\wh {\bm w}_{\text{ptl}} - \bm w)^\top\Big(Q_1+Q_2+Q_3\Big)\Big\|\\
	&\le&2C_{\min}^{-1} \|\wh {\bm w}_{\text{ptl}} - \bm w\|\Big(\|Q_1\| + \|Q_2\| + \|Q_3\|\Big)\\ &\le& C_{12}\|\wh {\bm w}_{\text{ptl}} - \bm w\| \bigg\{\Big(\sum_{k=1}^K \frac{s_{\max}\log p}{n_k}\Big)^{1/2} + \Big(\frac{K + t}{n}\Big)^{1/2} \bigg\}
	\eeqrs
	for some large enough constant $C_{12} >0$ with probability at least $1-\exp\{-(c_1+1)\log (n \wedge p)\}$. As a result, we have $P(\|\wh {\bm w}_{\text{ptl}} - \bm w\| \le C_{12}[(\sum_{k=1}^K s_{\max}\log p/n_k)^{1/2} + \{(K+t)/n\}^{1/2} ]) \ge 1-\exp(-c_1t)$. \\
\end{proof}

\begin{lemma}
	(Convergence Rate of $\wh{\bm \delta}_{\text{ptl}}$) Under the same conditions as in Theorem \ref{Theorem convergence rate}, there exist some positive constants $C_1$ and $C_2$ such that $P(\|\wh {\bm \delta}_{\text{ptl}} - \bm \delta\|^2 \le C_1(s_{\delta}r_n^2\wedge hr_n\wedge h^2))\ge 1-\exp\{-C_2\log (n\wedge p)\}$. \label{Lemma rate delta}
\end{lemma}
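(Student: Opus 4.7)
I would follow the classical LASSO template, with the twist that the effective noise in the regression of the profiled response $\wh e_i$ on $X_i$ is not just $\ve_i$ but carries two extra contamination terms from estimating $B$ and $\bm w$. Concretely, since $Y_i = Z_i^\top \bm w + X_i^\top \bm \delta + \ve_i$, we can write
\[
\wh e_i - X_i^\top \bm \delta \;=\; (Z_i-\wh Z_i)^\top \wh{\bm w}_{\text{ptl}} \;+\; Z_i^\top(\bm w - \wh{\bm w}_{\text{ptl}}) \;+\; \ve_i.
\]
The core technical step is to show that with probability at least $1-\exp\{-C\log(p\wedge n)\}$,
\[
\Big\|n^{-1}\sum_{i=1}^n X_i(\wh e_i - X_i^\top \bm \delta)\Big\|_\infty \;\lesssim\; r_n.
\]
Taking the $\ell_\infty$-norm of each of the three contributions above produces exactly the three pieces of $r_n$: the $(\sum_k s_{\max}\log p/n_k)^{1/2}$ term from plugging in the rate of $\wh B - B$ (condition (C1)), combined with a sub-Gaussian concentration bound on $n^{-1}\sum_i X_i X_i^\top (\wh B-B)\wh{\bm w}_{\text{ptl}}$ after conditioning on the source data; the $(K/n)^{1/2}$ term from Lemma~\ref{Lemma rate w} together with a coordinatewise Bernstein bound on $n^{-1}\sum_i X_i Z_i^\top$; and the familiar $(\log p/n)^{1/2}$ term from the sub-Gaussian tail of $n^{-1}\sum_i X_i \ve_i$ via a union bound over $p$ coordinates. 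Condition (C2) ensures uniformity; $\|\wh{\bm w}_{\text{ptl}}\|$ is bounded because $\|\bm w\|\le C_w$ and Lemma~\ref{Lemma rate w} controls the deviation.

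Armed with this noise bound, the basic inequality for $\wh{\bm \delta}_{\text{ptl}}$ gives
\[
\tfrac{1}{2n}\|X(\wh{\bm\delta}_{\text{ptl}}-\bm\delta)\|^2 \;\le\; \Big\langle n^{-1}\sum_i X_i(\wh e_i-X_i^\top\bm\delta),\,\wh{\bm\delta}_{\text{ptl}}-\bm\delta\Big\rangle + \lambda_\delta(\|\bm\delta\|_1 - \|\wh{\bm\delta}_{\text{ptl}}\|_1).
\]
With the choice $\lambda_\delta\ge C_\lambda r_n$, Hölder's inequality absorbs the inner product and yields the standard cone inclusion $\|(\wh{\bm\delta}_{\text{ptl}}-\bm\delta)_{S^c}\|_1 \le 3\|(\wh{\bm\delta}_{\text{ptl}}-\bm\delta)_{S}\|_1$ where $S = \supp(\bm\delta)$ with $|S|\le s_\delta$. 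Combining this with the restricted strong convexity bound of Lemma~\ref{Lemma rsc} (whose small-ball remainder $(\log p/n)\|u\|_1^2$ is absorbed by the cone condition up to the scale of $s_\delta \log p/n = o(1)$) delivers the sparse rate $\|\wh{\bm\delta}_{\text{ptl}}-\bm\delta\|^2 \lesssim s_\delta r_n^2$.

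For the other two pieces of the minimum, I would not use the cone condition. The slow rate $hr_n$ comes from a cruder version of the basic inequality: using $\|\wh{\bm\delta}_{\text{ptl}}\|_1\le \|\bm\delta\|_1 + \|\wh{\bm\delta}_{\text{ptl}}-\bm\delta\|_1$ and $\|\bm\delta\|_1\le h$, the inner product is bounded by $r_n(\|\wh{\bm\delta}_{\text{ptl}}\|_1 + \|\bm\delta\|_1)\le 2hr_n + r_n\|\wh{\bm\delta}_{\text{ptl}}-\bm\delta\|_1$, and this last term plus $\lambda_\delta\|\bm\delta-\wh{\bm\delta}_{\text{ptl}}\|_1$-type pieces combine with the RSC bound to give $\|\wh{\bm\delta}_{\text{ptl}}-\bm\delta\|^2\lesssim hr_n$. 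Finally, the $h^2$ ceiling follows by comparing the optimum to the feasible point $\bm 0$: then $\|\wh{\bm\delta}_{\text{ptl}}\|_1$ is at most $O(h)$ (since the criterion at $\bm 0$ is bounded), so $\|\wh{\bm\delta}_{\text{ptl}}-\bm\delta\|\le \|\wh{\bm\delta}_{\text{ptl}}\|+\|\bm\delta\|\lesssim h$.

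The main obstacle I anticipate is the careful bookkeeping in the $\ell_\infty$-noise bound, specifically handling the cross term $n^{-1}\sum_i X_i X_i^\top (\wh B-B)\wh{\bm w}_{\text{ptl}}$, because $\wh B$ and $\wh{\bm w}_{\text{ptl}}$ are not independent of $X_i$ in the target sample. The clean way is to decouple via the source estimator's independence from the target, apply a conditional sub-Gaussian argument with a uniform bound on $\|X_i\|_{\psi_2}$, and then union-bound over $p$ coordinates; the $K$ and $\log p$ factors inside $r_n$ arise naturally from this step.
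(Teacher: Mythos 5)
Your overall architecture coincides with the paper's: the same decomposition of $\wh e_i - X_i^\top\bm\delta$ into a $\wh B$-error term, a $\wh{\bm w}_{\text{ptl}}$-error term, and $n^{-1}\sum_i X_i\ve_i$ (yours puts $\wh{\bm w}_{\text{ptl}}$ on the first factor and $Z_i$ on the second, the paper does the reverse, but both are handled by the same coordinatewise Cauchy--Schwarz plus Bernstein arguments); the same $\ell_\infty$ noise bound of order $r_n$; the same basic inequality, cone condition, and restricted strong convexity (Lemma~\ref{Lemma rsc}) for the fast rate $s_\delta r_n^2$; and essentially the same slow-rate argument for $hr_n$.

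The one step that would fail is your derivation of the $h^2$ ceiling by comparing the objective at $\wh{\bm\delta}_{\text{ptl}}$ to the feasible point $\bm 0$. That comparison gives
\begin{equation*}
\lambda_\delta\|\wh{\bm\delta}_{\text{ptl}}\|_1 \;\le\; (2n)^{-1}\sum_{i=1}^n \wh e_i^{\,2} \;-\; (2n)^{-1}\sum_{i=1}^n \big(\wh e_i - X_i^\top\wh{\bm\delta}_{\text{ptl}}\big)^2 \;\le\; (2n)^{-1}\sum_{i=1}^n \wh e_i^{\,2},
\end{equation*}
and the right-hand side concentrates around $(\sigma^2 + \bm\delta^\top\Sigma\bm\delta)/2 = O(1)$, not around $O(\lambda_\delta h)$. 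So ``the criterion at $\bm 0$ is bounded'' only yields $\|\wh{\bm\delta}_{\text{ptl}}\|_1 \lesssim 1/\lambda_\delta$, which is far weaker than $O(h)$ in the regime where the $h^2$ piece of the minimum is active (namely $h \lesssim r_n \asymp \lambda_\delta$). Trying to sharpen it by writing the difference of squares as $n^{-1}\sum_i \wh e_i X_i^\top\wh{\bm\delta}_{\text{ptl}} - (2n)^{-1}\sum_i(X_i^\top\wh{\bm\delta}_{\text{ptl}})^2$ does not rescue the argument either, because $\|n^{-1}\sum_i X_i\wh e_i\|_\infty$ contains the non-vanishing term $\|\wh\Sigma\bm\delta\|_\infty = O(h)$, which is not dominated by $\lambda_\delta$. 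The paper instead compares to the true $\bm\delta$: from the basic inequality, applying the triangle inequality $\|\wh{\bm\delta}_{\text{ptl}}\|_1 \ge \|\wh{\bm u}_\delta\|_1 - \|\bm\delta\|_1$ to the penalty difference and using the nonnegativity of $\wh{\bm u}_\delta^\top\wh\Sigma\wh{\bm u}_\delta$ gives $0 \le 2\lambda_\delta\|\bm\delta\|_1 - \tfrac12\lambda_\delta\|\wh{\bm u}_\delta\|_1$, hence $\|\wh{\bm u}_\delta\| \le \|\wh{\bm u}_\delta\|_1 \le 4h$ directly. Replace your comparison-to-zero step with this and the rest of your plan goes through.
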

\begin{proof}
	Define $\wh {\bm u}_{\delta} = \wh {\bm \delta}_{\text{ptl}} - \bm \delta$. Note that $\wh {\bm \delta}_{\text{ptl}} = \argmin_{\bm \delta} \mL_{\text{target},\lambda_\delta}(\bm \delta)$. Then we know that $ \mL_{\text{target},\lambda_\delta}(\wh {\bm\delta}_{\text{ptl}}) \le  \mL_{\text{target},\lambda_\delta}( \bm \delta)$. We then obtain the following relationship
	\beqrs
	\frac 12 \wh {\bm u}_{\delta}^\top \wh \Sigma \wh {\bm u}_{\delta} &\le& \lambda_\delta \|\bm \delta\|_1 - \lambda_\delta \|\wh {\bm \delta}_{\text{ptl}}\|_1 + \Big| \wh {\bm u}_\delta^\top n^{-1}\sum_{i=1}^n X_i\Big(\wh e_i - X_i^\top\bm \delta\Big)\Big|\\
	&\le &\lambda_\delta \|\bm \delta\|_1 - \lambda_\delta \|\wh {\bm \delta}_{\text{ptl}}\|_1 + \|\wh {\bm u}_\delta\|_1 \Big\| n^{-1}\sum_{i=1}^n X_i\Big(\wh e_i - X_i^\top\bm \delta\Big)\Big\|_\infty,
	\eeqrs
	where $\wh \Sigma = n^{-1}\sum_{i=1}^n X_iX_i^\top$. 
	By Lemma \ref{Lemma rsc} and condition (C4), we have
	\beqr
	\wh {\bm u}_{\delta}^\top \wh \Sigma\wh {\bm u}_{\delta}&\ge& \frac{\lambda_{\min}(\Sigma)}{2} \|\wh {\bm u}_{\delta}\|^2 - C_{11}\lambda_{\min}(\Sigma)\max\bigg\{\frac{C_{\psi_2}^4}{\lambda_{\min}^2(\Sigma)},1\bigg\}\frac{\log p}{n}\|\wh {\bm u}_{\delta}\|_1^2\nonumber \\
	&\ge &\frac{C_{\min}}{2} \|\wh {\bm u}_{\delta}\|^2 - C_{12}\frac{\log p}{n}\|\wh {\bm u}_{\delta}\|_1^2\label{equation rsc for sigmaxx}
	\eeqr
	with $C_{12} = C_{11} \max\{C_{\psi_2}^4C_{\min}^{-1}, C_{\max}\}$ for some constant $C_{11} >0$ with probability $1-2\exp(-c_1\log p)$.
	Then we are going to show $\| n^{-1}\sum_{i=1}^n X_i(\wh e_i - X_i^\top\bm \delta)\|_\infty\le \lambda_w$ with high probability. Recall that $\wh e_i = Y_i - \wh Z_i^\top \wh w_{\text{ptl}}$ and $Y_i = Z_i^\top \bm w + X_i^\top \bm \delta + \ve_i$. Then we have
	\beqrs
	\Big\|n^{-1}\sum_{i=1}^n X_i(\wh e_i - X_i^\top \bm \delta)\Big\|_{\infty} &=& \Big\|n^{-1}\sum_{i=1}^n X_i(Z_i^\top \bm w - \wh Z_i^\top \wh {\bm w}_{\text{ptl}} + \ve_i)\Big\|_{\infty}\\
	&\le &\Delta_1 + \Delta_2 + \Delta_3 
	\eeqrs
	where $\Delta_1 = \|n^{-1}\sum_{i=1}^n X_i(Z_i - \wh Z_i)^\top \bm w\|_{\infty}$,
	$\Delta_2 = \|n^{-1}\sum_{i=1}^n X_i \wh Z_i(\bm w - \wh {\bm w}_{\text{ptl}})\|_{\infty}$ and 
	$\Delta_3 = \|n^{-1}\sum_{i=1}^n X_i\ve_i\|_{\infty}$. Then we next show that $P(\Delta_j > Cr_n) \le \exp(-c_1\log p)$ for $j = 1,2,3$.
	
	\textsc{Step 1.} We start with $\Delta_1$. Note that for $1\le j\le p$, conditional on $\wh B-B$, $X_{ij}(Z_i - \wh Z_i)^\top \bm w$ is sub-exponential with $\|X_{ij} (Z_i - \wh Z_i)^\top \bm w\|_{\psi_1} \le \|X_{ij}\|_{\psi_2} \|(Z_i - \wh Z_i)^\top \bm w\|_{\psi_2} \le C_{\psi_2}^2 \|\bm w\| \|\wh B - B\|_{\text{op}}$. Note that by Cauchy-Schwarz inequality, we have $E\{X_{ij}(Z_i - \wh Z_i)^\top w|\wh B - B\} = E\{e_j^\top X_iX_i^\top (\wh B - B)^\top w|\wh B - B\}= e_j^\top \Sigma(\wh B - B)^\top \bm w \le \lambda_{\max}(\Sigma) \|(\wh B - B)^\top \bm w\| \le C_{\max}\| \wh B - B\|_{\text{op}} \|\bm w\|$, where $e_j = (0,\dots,1,\dots,0)^\top$ is the canonical basis with the $j$th component equal to 1 otherwise 0. Then by Bernstein's inequality, we know that
	\begin{gather*}
		P\bigg(\bigg|n^{-1}\sum_{i=1}^nX_{ij}(Z_i - \wh Z_i)^\top \bm w - E\Big\{X_{ij}(Z_i - \wh Z_i)^\top \bm w \Big|\wh B - B\Big\}\bigg |\\ \quad\quad \quad \quad \quad  \quad \quad \quad \ge C_{13}C_{\psi_2}^2 \|\bm w\| \|\wh B - B\|_{\text{op}} \Big(\frac{\log p}n\Big)^{1/2}\bigg |\wh B - B\bigg) \le 2\exp(-c_1\log p).
	\end{gather*}
	As a result, we have
	\begin{gather*}
		P\bigg(\Big|n^{-1}\sum_{i=1}^nX_{ij}(Z_i - \wh Z_i)^\top \bm w\Big| \ge C_{14} \|\bm w\| \Big\|\wh B - B\Big\|_{\text{op}}\bigg|\wh B - B \bigg)\le 2\exp\Big\{-(c_1+1)\log p\Big\}
	\end{gather*}
	for $C_{14} \ge C_{13}C_{\psi_2}^2(\log p/n)^{1/2} + C_{\max}$. Then by union bound, we know that
	\beqrs
	& &P\bigg(\Delta_1 \ge C_{14} \|\bm w\| \Big\|\wh B - B\Big\|_{\text{op}} \bigg|\wh B - B\bigg)\\
	&\le &\sum_{j=1}^pP\bigg(\Big|n^{-1}\sum_{i=1}^nX_{ij}\Big(Z_i - \wh Z_i\Big)^\top \bm w\Big| \ge C_{14} \|\bm w\| \Big\|\wh B - B\Big\|_{\text{op}}\bigg|\wh B - B \bigg)\\&\le& 2\exp(-c_1\log p).
	\eeqrs
	Note that $\|\wh B - B\|_{\text{op}}\le \|\wh B -B\|_F \le C_3(\sum_{k=1}^K s_{\max}\log p/n_k)^{1/2}$ with probability $1-\exp(-c_1\log p)$. Then there exists a constant $C_{15} \ge \|w\|C_3C_{14}$ such that
	\beqrs
	P\bigg(\Delta_1 \ge C_{15}\Big(\sum_{k=1}^K\frac{s_{\max}\log p}{n_k}\Big)^{1/2}\bigg)
	&\le&  P\bigg(\|\wh B - B\|_F^2 \ge C_3\sum_{k=1}^K\frac{s_{\max}\log p}{n_k}\bigg) \\&+& E\bigg\{P\Big(\Delta_1 \ge C_{14} \|\bm w\| \Big\|\wh B - B\Big\|_{\text{op}} \bigg|\wh B - B\Big)\bigg\}\\&\le& 3\exp(-c_1\log p).
	\eeqrs
	By condition (C2), we know that $\sum_{k=1}^Ks_{\max}\log p/ n_k = o(1)$. Then we know that $P(\Delta_1 \le C_{15}(\sum_{k=1}^K s_{\max}\log p/n_k)^{1/2})\ge 1-3\exp(-c_1\log p)$.
	
	\textsc{Step 2.} Then we consider $\Delta_2$. By Cauchy Schwarz inequality, we have
	\beq
	\Big|n^{-1}\sum_{j=1}^p X_{ij} \wh Z_i^\top(\bm w - \wh {\bm w}_{\text{ptl}})\Big| \le \bigg(n^{-1}\sum_{i=1}^n X_{ij}^2\bigg)^{1/2}\bigg(n^{-1}\sum_{i=1}^n \| \wh Z_i^\top (\wh {\bm w}_{\text{ptl}} - \bm w) \|^2\bigg)^{1/2}. \label{equation cauchy for delta2}
	\eeq
	By condition (C3), we know that $X_{ij}^2$ is sub-exponential with $E(X_{ij}^2 ) \le C_{\max}$ and $\|X_{ij}^2\|_{\psi_1} = \|X_{ij}\|_{\psi_2}^2 \le C_{\psi_2}^2$. Then by Bernstein's inequality, we have { $P(n^{-1}\sum_{i=1}^n X_{ij}^2  - E(X_{ij}^2)\ge C_{16}L^2(\log p/n)^{1/2}) \le \exp\{-(c_1+1)\log p\}$ for some $C_{16}>0$. Note that $E(X_{ij}^2) = O(1)$. As a result, we have $P(n^{-1}\sum_{i=1}^n X_{ij}^2 \ge C_{17}) \le \exp\{-(c_1+1)\log p\}$ for $C_{17} \ge \max_j E(X_{ij}^2) + C_{16}L^2(\log p/n)^{1/2}$. }By Lemma \ref{Lemma rate w}, we know that $n^{-1}\sum_{i=1}^n \|\wh Z_i(\wh {\bm w}_{\text{ptl}} - \bm w)\|^2 = (\wh {\bm w}_{\text{ptl}} - \bm w)^\top \wh \Sigma_{\wh Z} (\wh {\bm w}_{\text{ptl}} - \bm w) \le 2C_{12}^2\{\sum_{k=1}^K s_{\max}\log p/n + (K+\log p)/n\} $ with probability at least $1-\exp\{-(c_1+1)\log p\}$. Then by union bound and \eqref{equation cauchy for delta2}, we have
	\beqrs
	& &P\bigg(\Delta_2 \ge C_{18}\bigg\{\Big(\sum_{k=1}^K \frac{s_{\max}\log p}{n_k}\Big)^{1/2} +  \Big(\frac Kn\Big)^{1/2} + \Big(\frac{\log p}n\Big)^{1/2}\bigg\}\bigg)\\
	&\le& \sum_{j=1}^p P\bigg(\Big|n^{-1}\sum_{i=1}^n X_{ij} \wh Z_i^\top(\bm w - \wh {\bm w}_{\text{ptl}})\Big| \\&&\ge C_{18}\bigg[ \Big(\sum_{k=1}^K \frac{s_{\max}\log p}{n_k}\Big)^{1/2} + \Big(\frac Kn\Big)^{1/2} + \Big(\frac{\log p}n\Big)^{1/2}\bigg]\bigg)\\
	&\le& \sum_{j=1}^p P\Big(n^{-1}\sum_{i=1}^n\|\wh Z_i^\top(\wh {\bm w}_{\text{ptl}} - \bm w)\|^2 \ge 2C_{12}\Big\{\sum_{k=1}^K \frac{s_{\max} \log p}{n_k} + \frac{K+\log p}{n} \Big\}\Big)\\
	&+ &\sum_{j=1}^p P\Big(n^{-1}\sum_{i=1}^n X_{ij}^2 \ge C_{17}\Big) \le 2\exp\{-c_1\log p\},
	\eeqrs
	where $C_{18}^2\ge 2C_{12}C_{17}$. As a result, we have $P(\Delta_2 \le C_{18}[(\sum_{k=1}^K s_{\max }\log p/n_k)^{1/2} + \{(K+\log p)/n\}^{1/2}])\ge 1-2\exp(-c_1\log p)$.

	\textsc{Step 3. }Then we evaluate $\Delta_3$. Note that $X_{ij}\ve_i$ is sub-exponential with $\|X_{ij}\ve_i\|_{\psi_1} \le C_{\psi_2}^2$. By union bound and Bernstein's inequality, we have
	\beqrs
	&&P\bigg(\Big\|n^{-1}\sum_{i=1}^n X_i\ve_i\Big\|_{\infty} \ge C_{19} C_{\psi_2}^2\Big(\frac{\log p}{n}\Big)^{1/2}\bigg)\\
	&\le& \sum_{j=1}^p P\bigg(\Big|n^{-1}\sum_{i=1}^n X_{ij}\ve_i\Big|\ge C_{19} C_{\psi_2}^2\Big(\frac{\log p}{n}\Big)^{1/2}\bigg)
	\le \exp(-c_1\log p)
	\eeqrs
	for some constant $C_{19}>0$. Then we take $\lambda_{\delta}= C_{\delta}r_n$ for some large enough $C_\delta>0$. Then we know that $P(\|n^{-1}\sum_{i=1}^n X_i(\wh e_i - X_i^\top\bm \delta)\|_{\infty} \le \lambda_{\delta}/2) \ge 1-3\exp(-c_1\log p)$.
	
	\textsc{Step 4.} In this step, we combine the above results to show the convergence rate of $\wh \delta_{\text{ptl}}$. Let $E = \{\eqref{equation rsc for sigmaxx}\text{ holds and }\|n^{-1}\sum_{i=1}^n X_i(\wh e_i - X_i^\top\bm \delta)\|_{\infty} \le \lambda_{\delta} /2\}$. Then by Steps 1-3, we know that $P(E) \ge 1-3\exp(-c_1\log p)$ when $\lambda_{\delta} \ge C_{\lambda} r_n$. Let $\bm u_{\mS} = \{u_j:j\in\mS\}$ be the sub-vector of $\bm u$ with the index set $\mS$.
	
	We first show that the $\|\wh {\bm u}_{\delta}\|^2 \lesssim s_{\delta}\lambda_{\delta}^2 $. On the event $E$, we have
	\beqrs
	\frac 12 \wh {\bm u}^\top_{\delta}\wh \Sigma \wh {\bm u}_{\delta} &\le& \lambda_{\delta} \|\bm \delta\|_1 - \lambda_{\delta}\|\wh {\bm \delta}_{\text{ptl}}\|_1 + \frac 12 \lambda_{\delta} \|\wh {\bm u}_{\delta}\|_1 \\
	&=& \lambda_{\delta}\|\bm \delta_{\mS}\|_1 -\lambda_{\delta}\|\wh {\bm \delta}_{\text{ptl},\mS}\|_1 - \lambda_{\delta}\|\wh {\bm \delta}_{\text{ptl},\mS^c}\|_1 + \frac12 \lambda_{\delta}\|\wh{ \bm u}_{\delta,\mS}\|_1 + \frac 12 \lambda_{\delta}\|\wh {\bm u}_{\delta,\mS^c}\|_1\\
	&\le&
	- \lambda_{\delta}\|\wh {\bm \delta}_{\text{ptl},\mS^c}\|_1 + \frac32 \lambda_{\delta}\|\wh {\bm u}_{\delta,\mS}\|_1 + \frac 12 \lambda_{\delta}\|\wh {\bm u}_{\delta,\mS^c} \|_1\\
	&\le& 
	\frac 32 \lambda_{\delta}\|\wh {\bm u}_{\delta,\mS}\|_1 - \frac 12 \lambda_{\delta} \|\wh {\bm u}_{\delta,\mS^c}\|_1.
	\eeqrs
	As a result, we have $\|\wh {\bm u}_{\delta, \mS^c}\|_1\le 3\|\wh {\bm u}_{\delta,\mS} \|_1 $. Then we have $\|\wh {\bm u}_{\delta}\|_1 = \|\wh {\bm u}_{\delta,\mS}\|_1 + \|\wh {\bm u}_{\delta,\mS^c}\|_1\le 4\|\wh {\bm u}_{\delta,\mS}\|_1 \le 4\sqrt{s_{\delta}}\|\wh {\bm u}_{\delta}\|$. By \eqref{equation rsc for sigmaxx}, we have
	\[
	\wh {\bm u}^\top_{\delta}\wh \Sigma \wh {\bm u}_{\delta} \ge \bigg(\frac{C_{\min}}2 - 16C_{12} \frac{s_{\delta}\log p}n \bigg)\|\wh {\bm u}_{\delta}\|^2.
	\]
	By condition (C2), we have $s_{\delta}\log p/n = o(1)$. Then when $n \ge 32C_{12}C_{\min}^{-1}s_{\delta}\log p$, we have $\|\wh {\bm u}_{\delta}\|^2 \le C_{20}\lambda_{\delta}\|\wh {\bm u}_{\delta,\mS}\|_1\le C_{20}\sqrt {s_{\delta}}\lambda_{\delta}\|\wh {\bm u}_{\delta}\| $ for some constant $C_{20} \ge3 (C_{\min}/2 - 16C_{12}s_{\delta}\log p/n)^{-1}$. As a result, we have $\|\wh {\bm u}_{\delta}\|^2 \le C_{20}^2s_{\delta}\lambda_{\delta}^2$ for some constant $C_{20}>0$.
	
	Then we show that $\|\wh {\bm u}_{\delta}\|^2 \lesssim h^2 $. By triangle inequality, we have
	\begin{gather*}
		0\le \frac 12 \wh {\bm u}^\top_{\delta}\wh \Sigma \wh {\bm u}_{\delta} \le \lambda_{\delta}\|\bm \delta\|_1 - \lambda_{\delta}\|\wh {\bm \delta}_{\text{ptl}}\|_1 + \frac 12\lambda_{\delta}\|\wh {\bm u}_{\delta}\|_1\le 2\lambda_{\delta}\|\bm \delta\|_1 - \frac 12 \lambda_{\delta}\|\wh {\bm u}_{\delta}\|_1.
	\end{gather*}
	Then we know that $\|\wh {\bm u}_{\delta}\|\le \|\wh {\bm u}_{\delta}\|_1 \le 4\|\bm \delta\|_1 \le 4h$.
	
	Lastly, we show that $\|\wh {\bm u}_{\delta}\|^2 \lesssim \lambda_{\delta} h $. By \eqref{equation rsc for sigmaxx} and $ h =O(1)$ in condition (C2), we have
	\beqrs
	\frac{C_{\min}}2 \|\wh {\bm u}_{\delta}\|^2& \le &C_{12}\frac{\log p}{n} \|\wh {\bm u}_{\delta}\|_1^2 + \wh {\bm u}_{\delta}^\top \wh\Sigma \wh {\bm u}_{\delta} \le C_{12}\frac{\log p}{n} \|\wh {\bm u}_{\delta}\|_1^2 + 2\lambda_{\delta}\|\bm \delta\|_1\\
	&\le& C_{12}\frac{h^2 \log p}{n} + 2\lambda_{\delta}h\le C_{21}\lambda_{\delta}h
	\eeqrs
	for some constant $C_{21} \ge C_{12}h \log p/n + 2$.  Combining the above results, we have $\|\wh \delta_{\text{ptl}} - \delta\|^2 = \|\wh u_{\delta}\|^2 \le C_{22}\min\{s_{\delta}\lambda_{\delta}^2, \lambda_{\delta} h, h^2 \}$ for some constants $C_{22}\ge \max\{C_{20}^2, C_{21}, 4\}$.\\
\end{proof}

\csection{Proof of Theorem \ref{Theorem convergence rate}}

%
%

Combining the results of Lemma \ref{Lemma rate w} and \ref{Lemma rate delta}, we have
\beqrs
&&\|\wh{\bm \beta}_{\text{ptl}} - \bm \beta\|^2 = \|\wh B\wh {\bm w}_{\text{ptl}} +\wh{\bm \delta}_{\text{ptl}} -  B\bm w - \bm \delta\|^2\\  &\le &2(\|(\wh B - B)(\wh {\bm w}_{\text{ptl}} - \bm w)\|^2 + \|(\wh B - B)\bm w \|^2 + \|B(\wh {\bm w}_{\text{ptl}} - \bm w)\|^2 + \|\wh {\bm \delta}_{\text{ptl}} -\bm \delta\|^2)\\
&\le& 2(\|\wh B - B\|_F^2 \|\wh {\bm w}_{\text{ptl}} - \bm w\|^2 + \|\bm w\|^2 \|\wh B - B\|_F^2 + C_{\max}^2 \|\wh {\bm w}_{\text{ptl}} - \bm w\|^2 + \|\wh {\bm \delta} - \bm \delta\|^2)\\
&\le& C_3C_{12}^2\Big( \sum_{k=1}^K\frac{s_{\max}\log p}{n_k}\Big) \Big\{ \sum_{k=1}^K\frac{s_{\max}\log p}{n_k} + \frac{K+\log (p\wedge n)}{n}\Big\} \\
&+&\|\bm w\|^2 \Big(C_3\sum_{k=1}^K\frac{s_{\max}\log p}{n_k}\Big) + C_{\max}^2 C_{12}^2\Big\{ \sum_{k=1}^K\frac{s_{\max}\log p}{n_k} + \frac{K+\log(p\wedge n)}{n}\Big\}\\
&+&C_{22}C_{\delta}^2\bigg[\Big\{s_{\delta}\sum_{k=1}^K \frac{s_{\max} \log p}{n_k} + \frac{s_{\delta} (K+\log(p\wedge n))}{n}\Big\}\wedge\\
&&\Big\{h\Big(\sum_{k=1}^K \frac{s_{\max} \log p}{n}\Big)^{1/2} +
h\Big(\frac{K+\log (p\wedge n)}{n}\Big)^{1/2}\Big\}\wedge h^2  \bigg]\\
&\le& C_{23}\Big(r_n^2 + s_{\delta} r_n^2\wedge h r_n\wedge h^2 \Big)
\eeqrs
with probability at least $1-\exp\{-c_1\log(p\wedge n)\}$ for some constant $C_{23} \ge \max\{C_w + C_{\max}^2 C_{12}^2, C_{22}C_{\delta}^2\}$. Then we complete the theorem proof.

\csection{Proof of Corollary \ref{Corollary compare lasso}}

By condition (1) and (2) in Corollary \ref{Corollary compare lasso}, we have $\sum_{k=1}^Ks_{\max}\log p /n_k = o(s_0\log p /n )$ and $K / n = o (s_0 \log p /n )$. As a result, we have $r_n^2 = o(s_0\log p/ n )$. Then it is required that $s_\delta r_n^2 \wedge h r_n \wedge h^2 = o(s_0\log p /n)$. If $h > s_{\delta} r_n$, then $s_\delta r_n^2 \wedge h r_n \wedge h^2 = s_{\delta} r_n^2 = o(s_0\log p /n )$ provided that $s_{\delta} = o(1)$. If $h < s_{\delta} r_n$, then $s_\delta r_n^2 \wedge h r_n \wedge h^2  = h r_n \wedge h^2 = o\{h (s_0\log p /n)^{1/2} \} \wedge h^2 = o(s_0\log p /n )$ provided that $h = o((s_0\log p / n)^{1/2})$. Then by the conclusion in Theorem \ref{Theorem convergence rate}, we have $\|\wh{\bm \beta}_{\text{ptl}} -\bm \beta\|^2 = o_p(s_0\log p / n)$. 

\csection{Proof of Corollary \ref{Corollary compare trans}}

By condition  (1) in Corollary \ref{Corollary compare trans}, we have $\sum_{k=1}^Ks_{\max} \log p /n_k = o(s_0 \log p / n^{\xi})  = o(s_0\log p / (\nu n^\xi + n )) =o(s_0 \log p / (n_{\mathcal A} + n)) $. By condition (2) in Corollary \ref{Corollary compare trans}, we have $K/ n = o (s_0\log p / n^{\xi }) = o(s_0\log p / (n_{\mathcal A} + n))$. As a result, $r_n^2 = o(s_0\log p / (n_{\mathcal A} + n) ) + O(\log p /n) = O(\log p /n )$ provided that $s_0 n / n_{\mathcal A} = O(1)$. If $h > s_{\delta} r_n$, then $h_0 > h > s_{\delta} r_n> (\log p /n )^{1/2}$ and $s_{\delta} r_n^2\wedge h r_n \wedge h^2 = s_{\delta} r_n^2 = O(s_{\delta} \log p /n ) = o(s_0\log p / n\wedge h (\log p /n )^{1/2})$ provided that $s_{\delta} = o(s_0(m_0\wedge 1))$. If $r_n<h < s_{\delta} r_n$, then $h_0 >h > (\log p /n)^{1/2}$ and $s_{\delta} r_n^2 \wedge h r_n \wedge h^2 = h r_n = o\{(s_{0} \log p /n )(1\wedge m_0^{1/2})\} = o\{(s_{0} \log p /n ) \wedge h_0(\log p /n)^{1/2}\}$. If $h <r_n$, then $s_{\delta} r_n^2 \wedge h r_n \wedge h^2 = h^2 = o(s_{0}\log p /n \wedge h_0(\log p/n)^{1/2} \wedge h^2 )$ provided that $h = o\{(s_0\log p/n)^{1/2} (m_0^{1/2}\wedge 1)\}$. 

\csection{Some Useful Lemmas of Theorem \ref{Theorem minimax}}

\begin{lemma}
	(Fano's Inequality) Let $(\Theta, d)$ be a metric space and for each $\theta\in \Theta$, there exists an associated probability measure $P_{\theta}$. Define $M(\eta;\Theta, d) = \{\theta^1,\dots, \theta^M\}$ be an $\eta$-packing set of $\Theta$. That is for all $\theta,\theta'\in M(\eta;\Theta, d)$, we have $d(\theta, \theta' )\ge \eta$. Let $M = |M(\delta;\Theta,d)|$. Then we have
	\[
	\inf_{\wh\theta}\sup_{\theta \in\Theta }P_{\theta} (d(\wh \theta, \theta)\ge \eta/2) \ge 1-\frac{\log 2 + M^{-2}\sum_{j,k=1}^M KL(P_{\theta^j} || P_{\theta^k})}{\log M}
	\]
	where $KL(P||Q)$ is the KL divergence of two probability distributions $P$ and $Q$
	.\label{Lemma Fano}
\end{lemma}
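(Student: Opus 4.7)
The plan is to follow the classical reduction from estimation to multi-way hypothesis testing, and then apply the information-theoretic form of Fano's inequality, bounding the relevant mutual information by the pairwise KL divergences appearing on the right-hand side.

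First, I would reduce estimation to testing. Given any estimator $\wh\theta$, define the induced test $\wh\psi = \argmin_{j\in\{1,\dots,M\}} d(\wh\theta, \theta^j)$, with ties broken arbitrarily. If the true parameter is $\theta^J$ and $d(\wh\theta, \theta^J) < \eta/2$, then by the triangle inequality and the $\eta$-packing property $d(\theta^j, \theta^k)\ge \eta$ for $j\ne k$, we must have $\wh\psi = J$. Hence $\{\wh\psi \ne J\}\subseteq \{d(\wh\theta,\theta^J)\ge \eta/2\}$, so
\[
\sup_{\theta\in\Theta} P_\theta\bigl(d(\wh\theta,\theta)\ge \eta/2\bigr) \;\ge\; \max_{1\le j\le M} P_{\theta^j}(\wh\psi\ne j) \;\ge\; \frac{1}{M}\sum_{j=1}^M P_{\theta^j}(\wh\psi\ne j).
\]
Taking infimum over $\wh\theta$ reduces the problem to lower bounding the average testing error of $\wh\psi(X)$ when $X\sim P_{\theta^J}$ and $J$ is uniform on $\{1,\dots,M\}$.

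Second, I would apply the standard information-theoretic Fano inequality. With $J$ uniform on $\{1,\dots,M\}$ and $X\mid J=j \sim P_{\theta^j}$, Fano's inequality yields $H(J\mid \wh\psi)\le \log 2 + P(\wh\psi\ne J)\log(M-1)$, and combined with the data-processing inequality $H(J\mid \wh\psi)\ge H(J\mid X) = \log M - I(J;X)$, this gives
\[
P(\wh\psi\ne J) \;\ge\; 1 - \frac{I(J;X)+\log 2}{\log M}.
\]

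Third, I would control the mutual information by the pairwise KL divergences in the stated bound. Letting $\bar P = M^{-1}\sum_{k=1}^M P_{\theta^k}$ denote the mixture, a direct computation gives the identity $I(J;X) = M^{-1}\sum_{j=1}^M \mathrm{KL}(P_{\theta^j}\,\|\,\bar P)$. By convexity of $\mathrm{KL}(\cdot\,\|\,\cdot)$ in its second argument (Jensen's inequality applied to $q\mapsto \mathrm{KL}(P\,\|\,q)$),
\[
\mathrm{KL}(P_{\theta^j}\,\|\,\bar P) \;\le\; \frac{1}{M}\sum_{k=1}^M \mathrm{KL}(P_{\theta^j}\,\|\,P_{\theta^k}),
\]
and averaging over $j$ yields $I(J;X) \le M^{-2}\sum_{j,k=1}^M \mathrm{KL}(P_{\theta^j}\,\|\,P_{\theta^k})$. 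Chaining the three displays gives the lemma. The only substantive technical step is the convexity bound relating $I(J;X)$ to the pairwise KL sum, and even this is a one-line application of Jensen; the rest is a textbook assembly of a packing argument with the classical Fano inequality, so I do not anticipate a real obstacle.
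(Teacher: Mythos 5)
Your proposal is correct and is precisely the standard argument (packing-to-testing reduction, classical Fano with data processing, and the mutual-information bound $I(J;X)\le M^{-2}\sum_{j,k}\mathrm{KL}(P_{\theta^j}\|P_{\theta^k})$ via convexity of KL in its second argument) found in the references the paper cites; the paper itself offers no proof beyond pointing to Tsybakov (2009) and Wainwright (2019). All three steps check out, including the triangle-inequality argument showing $\{\wh\psi\neq J\}\subseteq\{d(\wh\theta,\theta^J)\ge \eta/2\}$ and the harmless replacement of $\log(M-1)$ by $\log M$.
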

\begin{proof}
	The lemma proof can be found in \cite{Tsybakov2009IntroductionTN} and \cite{wainwright2019high}.
\end{proof}

\begin{lemma}
	(Packing Number of a Discrete Set) Define $\mH(s) = \{z=(z_1,\dots, z_p)^\top \in \{-1,0,1\}^p |\|z\|_0 = s\}$ for $0\le s\le p$ and $\rho_H(z,z') = \sum_{j=1}^p I(z_j\neq z_j')$ to be the Hamming distance. Then for $s < 2p/3$, there exists a subset $\wt \mH(s) \subset\mH(s)$ with cardinality $|\wt \mH(s)|\ge \exp\{2^{-1}s \log\frac{p-s}{s/2} \}$ such that $\rho_H(z,z')\ge s/2$ for all $z, z'\in \wt \mH(s)$. \label{Lemma packing number}
\end{lemma}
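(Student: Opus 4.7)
The plan is to construct $\wt \mH(s)$ by a greedy maximal packing argument: start with any $z^1 \in \mH(s)$, iteratively pick $z^{j+1} \in \mH(s)$ at Hamming distance at least $s/2$ from all previously chosen $z^1,\ldots,z^j$, and stop when this becomes impossible. At termination, every element of $\mH(s)$ lies within Hamming distance $< s/2$ of some chosen point, so if $V$ denotes the maximum, over $z \in \mH(s)$, of $|\{z' \in \mH(s): \rho_H(z,z') < s/2\}|$, then $|\wt\mH(s)| \ge |\mH(s)|/V$. The whole lemma then reduces to (i) a clean formula for $|\mH(s)|$ and (ii) a sharp upper bound on $V$.

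For (i), $|\mH(s)| = \binom{p}{s} 2^s$, which I would lower-bound via $\binom{p}{s} \ge (p/s)^s$. For (ii), I would parameterize any $z' \in \mH(s)$ relative to a fixed $z$ by three integers: $a$ = positions in $\supp(z)$ where $z'_j = z_j$, $b$ = positions in $\supp(z)$ where $z'_j = -z_j$, and $c$ = positions in $\supp(z)$ where $z'_j = 0$, with $a+b+c = s$. Because $\|z'\|_0 = s$, exactly $c$ positions outside $\supp(z)$ must be nonzero in $z'$. A direct count gives $\rho_H(z,z') = b + 2c$ and
\[
V = \sum_{b+2c < s/2}\binom{s}{b}\binom{s-b}{c}\binom{p-s}{c}\, 2^{c}.
\]
I would then show this sum is dominated (up to a manageable multiplicative factor) by its extremal terms, using the standard estimates $\binom{s}{b}\binom{s-b}{c}\le \binom{s}{c}2^{s}$ and $\binom{p-s}{c}\le \bigl(e(p-s)/c\bigr)^{c}$, together with $c < s/4$ forced by the constraint $b+2c < s/2$.

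Combining these bounds yields $|\wt\mH(s)|\ge |\mH(s)|/V \ge (2(p-s)/s)^{s/2} = \exp\{(s/2)\log((p-s)/(s/2))\}$, exactly the stated cardinality, provided the multiplicative constants absorbed into the counting step do not exceed the slack provided by $|\mH(s)|\ge (2p/s)^s$. The condition $s < 2p/3$ enters to guarantee $(p-s)/(s/2) > 1$ so the logarithm is positive and the bound is nonvacuous; it also ensures the binomial simplifications above are valid. The main obstacle is step (ii): the counting sum has a ternary flavor (sign flips, support removals, support additions) that does not reduce directly to a classical binary Varshamov–Gilbert volume bound, so the delicate part is keeping track of the three indices $(a,b,c)$ and verifying that the constant factors lost in the binomial estimates are truly negligible compared with the main exponential term $(2(p-s)/s)^{s/2}$, as opposed to eating into it.
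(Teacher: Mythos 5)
The paper offers no self-contained proof of this lemma: it is quoted from Lemma 4 of \cite{raskutti2011minimax} and the appendix simply cites that result, so any correct direct argument on your part would necessarily be a different route. Your reduction is set up correctly as far as it goes: the greedy maximal-packing step giving $|\wt\mH(s)|\ge |\mH(s)|/V$, the parameterization of a Hamming ball around a fixed $z$ by the triple $(a,b,c)$, the identity $\rho_H(z,z')=b+2c$, and the resulting volume formula $V=\sum_{b+2c<s/2}\binom{s}{b}\binom{s-b}{c}\binom{p-s}{c}2^{c}$ are all right.

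However, the step you yourself flag as the ``main obstacle'' is a genuine gap, not a formality, and the specific estimates you propose demonstrably fail. The target $\exp\{2^{-1}s\log\frac{p-s}{s/2}\}$ leaves no room for extraneous $C^{s}$ factors when $s\asymp p$, which lies squarely inside the allowed range $s<2p/3$. Take $s=p/2$: your lower bound gives $|\mH(s)|\ge (2p/s)^{s}=4^{s}$, while your upper bound on $V$, dominated by the term with $c\approx s/4$ (here $p-s=s$), is at least $2^{s}\binom{s}{s/4}^{2}2^{s/4}\approx e^{1.99s}\approx 7.3^{s}$; the ratio $|\mH(s)|/V$ certified by your chain is therefore below $1$ --- vacuous --- whereas the lemma demands $2^{s/2}$. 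The losses occur at every link: $\binom{p}{s}\ge (p/s)^{s}$ discards a factor of roughly $e^{s}$ at $s=p/2$ (the truth is $\binom{p}{p/2}2^{p/2}\approx e^{2.08s}$), and $\binom{s}{b}\binom{s-b}{c}\le\binom{s}{c}2^{s}$ replaces the binding constraint $b<s/2-2c$ by an unconstrained $2^{s}$. A sharp computation --- Stirling/entropy exponents for $|\mH(s)|$ together with an exact maximization of the $(b,c)$-term over the region $b+2c<s/2$ --- does show the greedy ratio beats the target uniformly over $s<2p/3$ (for instance at $s=p/2$ the true ratio is about $e^{0.64s}$ versus the required $e^{0.35s}$), so your architecture is salvageable; but that uniform-in-$s$ entropy comparison is precisely the nontrivial content of the lemma and is absent from your sketch. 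As written, the proposal is a plan whose decisive inequality is unproved: to complete it you must either carry out the entropy maximization for all admissible $s$, or fall back on the combinatorial argument underlying the cited Lemma 4 of \cite{raskutti2011minimax}.
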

\begin{proof}
	The lemma proof can be found in Lemma 4 of \cite{raskutti2011minimax}.
\end{proof}

\begin{lemma}
	(KL Divergence of Linear Regression Model) Consider the linear regression $P_{Y_i|X_i;\bm \beta}$ be the distribution function of $N(X_i^\top \bm \beta, \sigma^2)$. Suppose $X_i$ is independent distributed with covariance matrix $\Sigma$ and the distribution function $P_{X_i}$. Further assume that $\lambda_{\max}(\Sigma)\le C_{\max}$. Denote $P_{\bm \beta} = \prod_{i=1}^n P_{Y_i|X_i;\bm \beta} P_{X_i}$ be the joint distribution of $(X_i,Y_i), i=1,\dots, n$. Then we have $KL(P_{\bm \beta_1}||P_{\bm \beta_2})\le 2^{-1}\sigma^{-2}C_{\max}n\|\bm \beta_1 - \bm \beta_2\|^2$. \label{Lemma KL divergence}
\end{lemma}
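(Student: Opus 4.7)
The plan is to reduce the joint KL divergence to a sum of conditional Gaussian KL divergences and then bound them using the spectral condition on $\Sigma$. First I would exploit the product structure $P_{\bm \beta} = \prod_{i=1}^n P_{Y_i|X_i;\bm \beta}P_{X_i}$. Since the marginal law of $X_i$ does not depend on $\bm \beta$, chain-rule for KL divergence gives
\[
KL(P_{\bm \beta_1}\|P_{\bm \beta_2}) = \sum_{i=1}^n E_{X_i}\Big\{KL\big(P_{Y_i|X_i;\bm \beta_1}\,\|\,P_{Y_i|X_i;\bm \beta_2}\big)\Big\},
\]
so the $X_i$-marginal factors cancel and only the conditional discrepancies remain.

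Next I would compute the inner KL divergence in closed form. Because both conditionals are $N(X_i^\top \bm \beta_j,\sigma^2)$ with the same variance, the standard formula for KL between two univariate Gaussians yields
\[
KL\big(P_{Y_i|X_i;\bm \beta_1}\,\|\,P_{Y_i|X_i;\bm \beta_2}\big) = \frac{1}{2\sigma^2}\big(X_i^\top(\bm \beta_1 - \bm \beta_2)\big)^2.
\]
Then I would take expectation with respect to $X_i$. Using $E(X_iX_i^\top) = \Sigma$, this gives
$E_{X_i}\{(X_i^\top(\bm \beta_1-\bm \beta_2))^2\} = (\bm \beta_1-\bm \beta_2)^\top \Sigma(\bm \beta_1-\bm \beta_2)$, which is controlled by $\lambda_{\max}(\Sigma)\|\bm \beta_1-\bm \beta_2\|^2 \le C_{\max}\|\bm \beta_1-\bm \beta_2\|^2$ thanks to the eigenvalue assumption.

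Finally I would sum over $i=1,\dots,n$, which introduces the factor $n$ and delivers the bound $KL(P_{\bm \beta_1}\|P_{\bm \beta_2}) \le \tfrac{1}{2}\sigma^{-2}C_{\max}n\|\bm \beta_1-\bm \beta_2\|^2$. There is essentially no analytic obstacle here; the statement is a routine calculation. The only subtle point worth flagging in a careful write-up is the use of the KL chain rule to eliminate the common $X_i$-marginals, since this is what allows the bound to be stated independently of any assumption on the distribution of $X_i$ beyond its second moment.
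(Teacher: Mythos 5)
Your proposal is correct and follows essentially the same route as the paper: both decompose the joint KL via additivity over the product measure (the $P_{X_i}$ factors cancelling), reduce to the KL between two equal-variance Gaussians $\tfrac{1}{2\sigma^2}\{X_i^\top(\bm\beta_1-\bm\beta_2)\}^2$, take expectation over $X_i$ to get $(\bm\beta_1-\bm\beta_2)^\top\Sigma(\bm\beta_1-\bm\beta_2)$, and bound this by $C_{\max}\|\bm\beta_1-\bm\beta_2\|^2$. The only cosmetic difference is that the paper expands the log-density ratio explicitly (with the cross term vanishing in expectation) where you invoke the standard Gaussian KL formula; the content is identical.
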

\begin{proof}
	Let $\phi(x) = (2\pi)^{-1/2}\exp(-x^2/2)$ be the probability density function of the standard normal distribution. By the additivity of KL divergence, we have
	\beqrs
	KL(P_{\bm \beta_1}||P_{\bm \beta_2})& =& \sum_{i=1}^n KL(P_{Y_i|X_i;\bm \beta_1} P_{X_i}|| P_{Y_i|X_i;\bm \beta_2} P_{X_i}) \\&=& nKL(P_{Y_1|X_1;\bm \beta_1} P_{X_1}||P_{Y_1|X_1;\bm \beta_2} P_{X_1} )\\
	&= &n\int \log\bigg( \frac{\phi\{(Y_i - X_i^\top \bm \beta_1)/\sigma\}}{\phi\{(Y_i - X_i^\top\bm  \beta_2)/\sigma\}}\bigg) dP_{Y_i|X_i;\bm \beta_1} d P_{X_i}\\
	&= &n\sigma^{-2}\int (Y_i - X_i^\top \bm \beta_1) X_i^\top (\bm \beta_1 - \bm \beta_2 ) dP_{Y_i|X_i,\bm \beta_1} d P_{X_i} \\&+& 2^{-1}\sigma^{-2}n\int\{X_i^\top (\bm \beta_1 - \bm \beta_2)\}^2  dP_{Y_i|X_i;\bm \beta_1} d P_{X_i}\\
	&=& 2^{-1}\sigma^{-2}n(\bm \beta_1 - \bm \beta_2)\Sigma (\bm \beta_1 - \bm \beta_2) \\&\le& 2^{-1}\sigma^{-2}C_{\max}n \|\bm \beta_1 - \bm \beta_2\|^2.
	\eeqrs
\end{proof}

\csection{Proof of Theorem \ref{Theorem minimax}}

Define $P_{\bm \beta^{(k)}}^{(k)} = \prod_{i=1}^{n_k}P_{Y_i^{(k)}| X_i^{(k)}; \bm \beta^{(k)}}P_{X_i^{(k)}}$ be the joint distribution of $(X_i^{(k)}, Y_i^{(k)})$ for $1\le k\le K$. Further define $P_B^s = \prod_{k=1}^K P_{\bm \beta^{(k)}}^{(k)}$ be the joint distribution of the source dataset. The theorem can be proved by considering different terms in the lower bound.

\textsc{Step 1.} We first verify the term $\sum_{k=1}^K s_k\log p /(n_k\vee n)$ in the lower bound. For a fixed index $1\le k\le K$, we fix $\{\bm \beta^{(j)}\}_{j\neq k}$, $\bm w$ with $w_k \neq 0$ and $\bm \delta$. To construct a packing set of $\mathcal B_0(s_{\max }) = \{\bm \beta^{(k)}: \|\bm \beta^{(k)}\|_0 \le s_{\max} \}$, we define $\mH(s_{\max}) = \{z = (z_1,\dots, z_p)^\top \in \{-1,0,1\}^p:\|z\|_0= s_{\max} \}$. By Lemma \ref{Lemma packing number},  we can find a subset $\wt \mH(s_{\max} )\subset \mH(s_{\max})$ with cardinality $|\wt \mH(s_{\max})| \ge \exp[2^{-1}s_{\max} \log \{(p-s_{\max}/(s_{\max}/2)\}]$ such that $\rho_H(z,z') \ge s_{\max}/2$ for all $z,z'\in \wt \mH(s_{\max})$. As a result, $\eta \sqrt 2s_k^{-1/2}|w_k|^{-1}\wt \mH(s_{\max})$ is a $|w_k|^{-1}\eta$-packing set for $\mathcal B_0(s_{\max})\cap \mathcal B_2(\sqrt {2}\eta)$, where $\mathcal B_2(R) = \{\bm \beta^{(k)}: \|\bm \beta^{(k)}\|\le R \}$. Then $\eta \sqrt 2s_k^{-1/2}|w_k|^{-1}\wt \mH(s_{\max})$ is also a $|w_k|^{-1}\eta$-packing set for $\mathcal B_0(s_{\max})$. For arbitrary different $\bm \beta^{(k)i} ,\bm \beta^{(k)j} \in \eta\sqrt 2s_{\max}^{-1/2}|w_k|^{-1}\wt \mH(s_{\max}) $, we have $\| \bm \beta^{(k)i } - \bm \beta^{(k)j}\|^2 \le 4s_k \eta^2 2s_{\max}^{-1}w_k^{-2} = 8w_k^{-2}\eta^2$. Let $\bm \beta^{k,i}= B^{k,i} \bm w + \bm \delta$ with $B^{k,i} = (\bm \beta^{(1)},\dots, \bm \beta^{(k)i},\dots,\bm \beta^{(K)})$ for $i=1,\dots, M$ with $M = |\wt \mH(s_{\max})|$. Then $\|\bm \beta^{k,i} - \bm \beta^{k,j}\| = \|(B^{k,i} - B^{k,j}) \bm w\| = |w_k| \|\bm \beta^{(k)i } - \bm \beta^{(k)j} \| \ge \eta$.  Then by Lemma \ref{Lemma KL divergence} and the additivity of KL divergence, we have
\beqrs
KL(P_{B^{k,i}}^sP_{\bm \beta^{k,i}}||P_{B^{k,j}}^sP_{\bm \beta^{k,j}})  &=& KL(P_{B^{k,i}}^s|| P_{B^{k,j}}^s) + KL(P_{\bm \beta^{k,i}}|| P_{\bm \beta^{k,j}}) \\&=& KL(P_{\bm \beta^{(k)i}}^{(k)} || P_{\bm \beta^{(k)j}}^{(k)}) + KL(P_{\bm \beta^{k,i}}|| P_{\bm \beta^{k,j}}) \\
&\le& \frac{n_k}{2\sigma_k^2}C_{\max}\|\bm \beta^{(k)i} - \bm \beta^{(k)j}\|^2 + \frac{n}{2\sigma^2}C_{\max }\|\bm \beta^{k,i} - \bm \beta^{k,j} \|^2 \\&\le& C_{\max} \Big(\frac{n_k}{2\sigma_k^2} + \frac{n}{2\sigma^2}|w_k|^2\Big) \|\bm \beta^{(k)i} - \bm \beta^{(k)j}\|^2 \\&\le& 4C_{\max}\Big(\frac{n_k}{|w_k|^2\sigma_k^2} + \frac{n }{\sigma^2}\Big) \eta^2.
\eeqrs
Take $\eta = C_{24}\{s_{\max}\log p/(n_k\vee n)\}^{1/2}$ for some small enough $C_{24} >0$. Then by Fano's inequality (Lemma \ref{Lemma Fano}), we have
\beqrs
& &\inf_{\wh{\bm \beta}} \sup_{\bm \beta\in \Theta(\bm s,h)} P\Big(\|\wh{\bm \beta}- \bm \beta\|^2
\ge  C_{24} \frac{s_{\max}\log p}{n_k\vee n} \Big) \\ &\ge& 1- \frac{\log 2 + 2C_{24}^2(w_k^{-2} \sigma_k^{-2}+ \sigma^{-2})C_{\max} s_{\max}\log p }{2^{-1}s_k\log \{(p-s_{\max})/(s_{\max}/2)\}} \ge \frac 12.
\eeqrs
By varying the index $k$, we have
\[
\inf_{\wh{\bm \beta}} \sup_{\bm \beta\in \Theta(s,h)} P\Big(\|\wh{\bm \beta}- \bm \beta\|^2 \ge C_{25} \sum_{k=1}^K \frac{s_{\max}\log p}{n_k\vee n} \Big) \ge \frac 12
\]
for some positive constant $C_{25}>0$.

\textsc{Step 2. }Then we verify the $K/n$ in the lower bound. We apply the local packing technique to verify the conclusion \citep{wainwright2019high}. We fix $\{\bm \beta^{(k)}\}_{k=1}^K$ and $\bm \delta$ satisfying $C_{\min}\le \lambda_{\min}(B^\top B)\le \lambda_{\max}(B^\top B)\le C_{\max}$. For any $\eta >0$, denote $\{w^1,\dots, w^M\}$ is a maximal $C_{\min}^{-1/2}\eta$-packing set of the set $\Theta_w = \{\bm w:\|\bm w\| \le2C_{\min}^{-1/2} \eta \}$. Then by Corollary 4.2.13 of \cite{vershynin2018high}, we have $\log M \ge \log N(C_{\min}^{-1/2}\eta;\Theta_w, \|\cdot \|)\ge K\log 2$, where $N(C_{\min}^{-1/2}\eta; \Theta_w, \|\cdot\|)$ is the $C_{\min}^{-1/2}\eta$-covering number of $\Theta_w$ under the $\ell_2$ norm. As a result, we have $C_{\min}^{-1/2}\eta \le \|\bm w^i - \bm w^j \|\le 4C_{\min}^{-1/2}\eta$. Denote $\bm\beta^i = B\bm w^i + \bm \delta$ for $i = 1,\dots, M$. Then we have $\|\bm \beta^i - \bm \beta^j\|^2 = \|B(\bm w^i - \bm w^j)\|^2\ge C_{\min }\|\bm w^i - \bm w^j\|^2\ge \eta^2$ and $\|\bm \beta^i - \bm \beta^j\|^2 =\|B(\bm w^i - \bm w^j)\|^2 \le C_{\max}\|\bm w^i -\bm  w^j \|^2 \le C_{\max}C_{\min}^{-1}\eta^2$. Then by Lemma \ref{Lemma KL divergence}, we have $KL(P_{B}^sP_{\bm \beta^i}||P_{B}^sP_{\bm \beta^j}) = KL(P_{\bm \beta^i}|| P_{\bm \beta^j}) \le2^{-1}\sigma^{-2}C_{\max}^2C_{\min}^{-1}n\eta^2$. Let $\eta = C_{26}\sqrt{K/n}$ and by applying Fano's inequality, we have
\[
\inf_{\wh{\bm \beta}} \sup_{\bm\beta\in \Theta(\bm s, h)} P\Big(\|\wh{\bm \beta}- \bm \beta\|^2 \ge C_{26}\frac Kn \Big) \ge 1- \frac{\log 2 + 2^{-1}\sigma^{-2}C_{\max}^2 C_{\min}^{-1}C_{26}^2 K }{K\log 2} \ge \frac 12
\]
for some small enough constant $C_{26} >0$.

\textsc{Step 3.} Then we verify the term $(s_{\delta }\log p/n)\wedge h(\log p/n)^{1/2}\wedge h^2$. We fix $B$ with $\beta^{(k)} = e_k$ and $w$, where $\{e_j\}_{j=1}^p$ forms the canonical basis of $\mR^p$. We also fix the covariance matrix of $X_i$ to be $I_p$. Then the identification condition $\bm \beta^{(k)\top} \Sigma\bm \delta = 0$ becomes $\delta_i = 0,i=1,\dots,K$. Then we modify the proof from \cite{rigollet2011exponential} to prove the lower bound. Denote $m = \lfloor h(\log p/n)^{-1/2}\rfloor$, where $\lfloor x \rfloor$ is the largest integer no greater than $x$. Then the conclusion holds by considering the following three steps. In Step 3.1, we prove the lower bound when $(s_{\delta }\log p/n)\wedge h(\log p/n)^{1/2}\wedge h^2 = h^2$. In Step 3.2, we consider the case when $(s_{\delta }\log p/n)\wedge h(\log p/n)^{1/2}\wedge h^2 = h(\log p/n)^{1/2}$. In Step 3.3, the situation becomes $(s_{\delta }\log p/n)\wedge h(\log p/n)^{1/2}\wedge h^2 = s_{\delta }\log p/n$.

\textit{Step 3.1.} If $m = 0$, then we have $h^2<\log p/n$. Define $\mH_K(s_{\delta}) = \{z = (z_1,\dots, z_p)^\top\in \mR^p: z\in \mH(s_{\delta}), z_i = 0,i=1,\dots,K\}$. Note that the conclusion of Lemma \ref{Lemma packing number} can be also apply to parameters with common sub-vector. Then by Lemma \ref{Lemma packing number}, we can find a subset $\wt \mH_{K}(s_{\delta}) \subset \mH_K(s_\delta)$ such that $|\wt \mH_K(s_{\delta})| \ge \exp[2^{-1}s_{\delta}\log \{(p-K-s_{\delta})/(s_{\delta}/2)\}]$ and $\rho_H(z,z')\ge s_{\delta}/2$ for all $z,z'\in \wt \mH_K(s_{\delta})$ with $z\neq z'$. Then for $\delta^i\neq \delta^j\in C_{27} h \wt \mH_K(1)$, we have $\|\bm \delta^i - \bm \delta^j \| \ge C_{27}h$ and $\|\bm \delta\|_1\le C_{27}h \le h$ for $C_{27}<1$. Define $\bm \beta^i = B\bm w + \bm \delta^i$ for $i = 1,\dots, |\wt\mH_K(1)|$. Then by the above arguments, we know that $\bm \beta^i \in \Theta(\bm s,h)$ and $\|\bm \beta^i - \bm \beta^j \| = \|\bm \delta^i -\bm \delta^j\| \ge C_{27}h$. In the meanwhile, by Lemma \ref{Lemma KL divergence}, we have $KL(P_{B}^sP_{\bm \beta^i}|| P_B^s P_{\bm \beta^j}) = KL(P_{\bm \beta^i}||P_{\bm \beta^j}) \le 2^{-1}\sigma^{-2}C_{\max}n\|\bm \beta^i - \bm \beta^j\|^2=2^{-1}\sigma^{-2}C_{\max}n\|\bm \delta^i - \bm \delta^j\|^2\le \sigma^{-1}C_{\max}C_{27}^2nh^2 $. Then by Fano's inequality (Lemma \ref{Lemma Fano}), we have
\[
\inf_{\wh{\bm \beta}}\sup_{\bm \beta\in \Theta(\bm s,h)} P(\|\wh{\bm \beta} - \bm \beta\| \ge C_{27}h)\ge 1- \frac{\log 2 + 2^{-1}\sigma^{-2} C_{\max}C_{27}^2n h^2}{\log (p - K)}\ge \frac 12.
\]

\textit{Step 3.2.} If $m\ge 1$ and $s_{\delta}\log p/n > h(\log p/n)^{1/2}$ then we have $s_{\delta} > m$ and $h > (\log p/n)^{1/2}$. By a similar step in Case 1, we know that there exists $\wt \mH_K(m)$ such that $|\wt \mH_K(m) |\ge \exp[2^{-1}m\log \{(p-K-m)/(m/2)\}]$ and $\rho_H(z,z') \ge m/2$ for all $z,z'\in \wt \mH_K(m)$ with $z\neq z'$. Then for $\bm \delta^i\neq \bm \delta^j \in \sqrt{2}m^{-1/2}\eta \wt \mH_K(m) $, we have $\|\bm \delta^i - \bm \delta^j\|^2 \le 4m^{-1} \eta^2 2m = 8\eta^2$ and $\|\bm \delta^i - \bm \delta^j\|^2 \ge\eta^2$. In the meanwhile, we have $\|\bm \delta^i\|_1 = (2m)^{1/2}\eta \le h$ for $\eta = C_{28}h^{1/2}(\log p/n)^{1/4}$ with some small enough constant $C_{28} < 1/\sqrt 2$. Define $\bm \beta^i = B\bm w + \bm \delta^i$ for $i = 1,\dots, |\wt \mH_K(m)|$. Then by the above arguments, we have $\bm \beta^i \in \Theta(\bm s,h)$ and $\|\bm \beta^i - \bm \beta^j \| = \|\bm \delta^i - \bm \delta^j\|\ge \eta$. By Lemma \ref{Lemma KL divergence}, we have $KL(P_{B}^sP_{\bm \beta^i}|| P_B^s P_{\bm \beta^j}) =KL(P_{\bm \beta^i} || P_{\bm \beta^j}) \le 2^{-1}\sigma^{-2}C_{\max} n\|\bm \beta^i - \bm \beta^j \|^2 =2^{-1}\sigma^{-2}C_{\max} n\|\bm \delta^i - \bm \delta^j \|^2\le 4\sigma^{-2}C_{\max}C_{28}^2h(n\log p)^{1/2}$. Recall that $m = \lfloor h(\log p/n)^{-1/2}\rfloor$. As a result, by Fano's inequality (Lemma \ref{Lemma Fano}), we have
\beqrs
& &\inf_{\wh{\bm \beta}}\sup_{\bm \beta\in \Theta(\bm s,h)} P\bigg(\|\wh{\bm \beta} - \bm \beta\|^2 \ge C_{28}h\Big(\frac{\log p}{n}\Big)^{1/2}\bigg)\\
&\ge& 1- \frac{\log 2 + 2^{-1}\sigma^{-2} C_{\max}C_{28}^2h(n\log p)^{1/2}}{2^{-1}m\log \{(p-K-m)/(m/2)\}}\ge \frac 12,
\eeqrs
when $C_{28}$ is sufficiently small.

\textit{Step 3.3.} If $m\ge 1$ and $s_{\delta} \log p/n < h(\log p/n)^{1/2}$, then we have $h > s_{\delta}(\log p/n)^{1/2}$. By a similar step in Case 1, we know that there exists $\wt \mH_K(s_{\delta})$ such that $|\wt \mH_K(s_{\delta}) |\ge \exp[2^{-1}s_{\delta}\log \{(p-K-s_{\delta})/(s_{\delta}/2)\}]$ and $\rho_H(z,z') \ge s_{\delta}/2$ for all $z,z'\in \wt \mH_K(s_{\delta})$ with $z\neq z'$. Then for $\bm \delta^i\neq \bm \delta^j \in \sqrt{2}s_{\delta}^{-1/2}\eta \wt \mH_K(s_{\delta}) $, we have $\|\bm \delta^i - \bm \delta^j\|^2 \le 4s_{\delta}^{-1} \eta^2 2s_{\delta} = 8\eta^2$ and $\|\bm \delta^i - \bm \delta^j\|^2 \ge\eta^2$. In the meanwhile, we have $\|\bm \delta^i\|_1 = (2s_{\delta})^{1/2}\eta \le h$ for $\eta = C_{29}(s_{\delta}\log p/n)^{1/2}$ with some small enough constant $C_{29} < 1/\sqrt 2$. Define $\bm \beta^i = B\bm w + \bm \delta^i$ for $i = 1,\dots, |\wt \mH_K(s_{\delta})|$. Then by the above arguments, we have $\bm \beta^i \in \Theta(\bm s,h)$ and $\|\bm \beta^i - \bm \beta^j \| = \|\bm \delta^i - \bm \delta^j\|\ge \eta$. By Lemma \ref{Lemma KL divergence}, we have $KL(P_{B}^sP_{\bm \beta^i}|| P_B^s P_{\bm \beta^j}) =KL(P_{\bm \beta^i} || P_{\bm \beta^j}) \le 2^{-1}\sigma^{-2}C_{\max} n\|\bm \beta^i - \bm \beta^j \|^2 =2^{-1}\sigma^{-2}C_{\max} n\|\bm \delta^i - \bm \delta^j \|^2\le 4\sigma^{-2}C_{\max}C_{29}^2s_{\delta}\log p$. As a result, by Fano's inequality (Lemma \ref{Lemma Fano}), we have
\[
\inf_{\wh{\bm \beta}}\sup_{\bm \beta\in \Theta(\bm s,h)} P\bigg(\|\wh{\bm \beta} - \bm \beta\|^2 \ge C_{29}\frac{s_{\delta}\log p}{n}\bigg)\ge 1- \frac{\log 2 + 2^{-1}\sigma^{-2} C_{\max}C_{29}^2s_{\delta}\log p}{2^{-1}s_{\delta}\log \{(p-K-s_{\delta})/(s_{\delta}/2)\}}\ge \frac 12,
\]
when $C_{29}$ is sufficiently small.
\end{CJK}
\end{document}